\newtheorem{theorem}{Theorem}
\numberwithin{theorem}{section}
\newtheorem{corollary}[theorem]{Corollary}
\newtheorem{lemma}[theorem]{Lemma}
\newtheorem{proposition}[theorem]{Proposition}
\newtheorem{conjecture}[theorem]{Conjecture}
\newtheorem{definition}[theorem]{Definition}
\theoremstyle{remark}
\def\eref#1{(\ref{#1})}
\newcommand{\R}{\mathbb{R}}
\newcommand{\C}{\mathbb{C}}
\newcommand{\D}{\mathbb{D}}
\newcommand{\Z}{\mathbb{Z}}
\def\H{\mathbb{H}}
\def\Im{{\rm Im}\,}
\def\Re{{\rm Re}\,}
\def\SLEkk#1/{$\mathrm{SLE}_{#1}$}
\def\SLEk/{\SLEkk{\kappa}/}
\def\SLEtwo/{\SLEkk2/}
\def\SLE/{$\mathrm{SLE}$}
\def\CLEkk#1/{$\mathrm{CLE}_{#1}$}
\def\CLEk/{\CLEkk{\kappa}/}
\def\CLEtwo/{\CLEkk2/}
\def\CLE/{$\mathrm{CLE}$}
\def\GLEkk#1/{$\mathrm{GLE}_{#1}$}
\def\GLEk/{\GLEkk{\kappa}/}
\def\GLEtwo/{\GLEkk2/}
\def\GLE/{$\mathrm{GLE}$}
\def\Ito/{It\^o}
\def \eps {\varepsilon}
\def \proof {{ \medbreak \noindent {\bf Proof.} }}
\def\proofof#1{{ \medbreak \noindent {\bf Proof of #1.} }}
\def\density{\rho}
\def\Var{\mathrm{Var}}
\def\Cov{\mathrm{Cov}}
\def\GFFSurvey{MR2322706}
\title{Liouville Quantum Gravity and KPZ}
\author{{\sc Bertrand Duplantier\thanks{e-mail: \texttt{Bertrand.Duplantier@cea.fr}.  Partially supported by  grant  ANR-08-BLAN-0311-CSD5 and CNRS grant PEPS-PTI 2010.}}\, and
{\sc Scott Sheffield\thanks{e-mail: \texttt{sheffield@math.mit.edu}.
Partially supported by NSF grants DMS 0403182 and DMS 064558 and
OISE 0730136.}}
\\
{\it $^*$Institut de Physique Th\'{e}orique, CEA/Saclay}\\
{\it F-91191 Gif-sur-Yvette Cedex, France}\\
{\&}\\
{\it $^\dagger$Department of Mathematics}\\
{\it Massachusetts Institute for Technology}\\
{\it Cambridge, Massachusetts 02139, USA}}
\date{November 19, 2010}
\begin{document}
\maketitle
\begin{abstract}
Consider a bounded planar domain $D$, an instance $h$ of the
Gaussian free field on $D$, with Dirichlet energy $({2\pi})^{-1}
\int_D \nabla h(z) \cdot \nabla h(z)dz,$ and a constant $0\leq
\gamma <2$.  The {\bf Liouville quantum gravity measure} on $D$ is
the weak limit as $\varepsilon \to 0$ of the measures
$$\varepsilon^{\gamma^2/2} e^{\gamma h_\varepsilon(z)}dz,$$ where
$dz$ is Lebesgue measure on $D$ and $h_\varepsilon(z)$ denotes the
mean value of $h$ on the circle of radius $\varepsilon$ centered at
$z$.  Given a random (or deterministic) subset $X$ of $D$ one can
define the scaling dimension of $X$ using either Lebesgue measure or
this random measure.  We derive a general quadratic relation between
these two dimensions, which we view as a probabilistic formulation
of the Knizhnik, Polyakov, Zamolodchikov (KPZ, 1988) relation from conformal field theory.  We also present a
boundary analog of KPZ (for subsets of $\partial D$). We discuss the connection between
discrete and continuum quantum gravity and provide a framework for
understanding Euclidean scaling exponents via quantum gravity.
\end{abstract}

\newpage

\begin{quotation} {\it ``There are methods and formulae in science, which serve as master-keys to
many apparently different problems.  The resources of such things
have to be refilled from time to time.  In my opinion at the present
time we have to develop an art of handling sums over random
surfaces.  These sums replace the old-fashioned (and extremely
useful) sums over random paths. The replacement is necessary,
because today gauge invariance plays the central role in physics.
Elementary excitations in gauge theories are formed by the flux
lines (closed in the absence of charges) and the time development of
these lines forms the world surfaces.  All transition amplitude[s]
are given by the sums over all possible surfaces with fixed
boundary.''} (A.M. Polyakov, Moscow, 1981.) \cite{MR623209}
\end{quotation}

\section{Introduction}
\subsection{Overview}
The study of certain natural probability measures on the space of
two dimensional Riemannian manifolds (and singular limits of these
manifolds) is often called ``two-dimensional quantum gravity.''
These models have been very thoroughly studied in the physics
literature, in part because of connections to string theory and
conformal field theory \cite{MR623209, MR623210, MR1122810,
MR1052937,1990PThPS.102..319S,Ginsparg-Moore, MR1307402,MR1461284,
MR1360409, MR1338099, MR1320471, 1995PhRvD..51.1836K,
1996NuPhS..45..135K, MR1465433,Eynard2000, 2006math.ph...8053D}, and
to random matrix theory and geometrical models; see, e.g., the
references \cite{1978CMaPh..59...35B,
1985NuPhB.257..433A,1985PhLB..157..295K,1985PhLB..159..303D,1986NuPhB.275..641B,
1986PhLB..174...87B,1986PhLA..119..140K,
1988PhRvL..61.1433D,1990NuPhB.340..491D,1989PhLB..220..200G,1989MPLA....4..217K,1989NuPhB.326..583K,1990MPLA....5.1041D,1991NuPhB.362..665M,
1992NuPhB.386..520K,1992NuPhB.386..558E,1992PhLB..286..239J,1992PhLB..296..323K,1992MPLA....7.3081K,1993NuPhB.394..383A,1994NuPhB.426..203D,1994MPLA....9.1221A,
Daul-1995,
1995NuPhB.455..577E,1995NuPhB.434..264K,1995NuPhB.440..189B,1996PhLB..388..713A,MR1666816,
 1999HMDup, MR1723364, 1999PhRvL..82..880D,Eynard-Bonnet,MR1690386,MR1762323,2000PhRvL..84.1363D,
MR1749396, 2002PhRvL..89z4101D,MR2112128,2007JSMTE..08...23K,2008ExactMethods}. More recently, a purely combinatorial
approach to discretized quantum gravity has been successful
\cite{schaeffer1998, flajolet1, flajolet2, MR1938319, 2002math.....11070B, MR2013797,
MR1965114, MR1987861, MR2152580, MR2369957, MR2571957,
MR2336042, MR2349571,bernardi-2006,2006math......8057B,2006math......1678B,2006math.....12003B,2008JPhA...41n5001B, MR2375600, MR2399286,
2008JSMTE..07..020B,2008LeGall,2009JSMTE..03..001B,2009arXiv0907.3262L,2009arXiv0909.1695B}, {as well as the so-called topological expansion involving higher-genus random surfaces
\cite{MR2563085,MR2507734,2008arXiv0804.0546C,2007math.ph...2045E,
2008JHEP...06..037E,2009JHEP...03..003E}.}

One of the most influential papers in this field is a 1988 work of
Knizhnik, Polyakov, and Zamolodchikov \cite{MR947880}.  Building on
a 1987 work of Polyakov \cite{Polyakov:1987zb}, the authors derive a
relationship (the {\bf KPZ formula}) between scaling dimensions of
fields defined using Euclidean geometry and analogous dimensions
defined via Liouville quantum gravity (as described earlier in
 \cite{MR623209, MR623210}; {see \cite{2008arXiv0812.0183} for a recent historical recount}). An alternative heuristic derivation
using Liouville field theory in the so-called conformal gauge was
proposed shortly after \cite{MR981529, MR1005268} (see also
\cite{1993MPLA....8.3529T}). The original work by KPZ has been cited
roughly a thousand times in a variety of contexts, which we will not
attempt to survey here, though we mention that there have been a
number of explicit calculations in Liouville field theory with
matching results in the random matrix theory approach, e.g.,
\cite{1991PhRvL..66.2051G,1994NuPhB.429..375D,1995PhLB..363...65T,1996NuPhB.477..577Z,fateev-2000,
MR1867860,2001JHEP...11..044H,MR1877816,2003NuPhB.658..397K,Zamolo2004,MR2057108,
2006CMaPh.268..135T,MR2354665}; {for a review, see \cite{2004IJMPA..19.2771N}.}


The relationship in \cite{MR947880} has never been proved or even
precisely formulated mathematically.  The main goal of this work is
to formulate and prove the KPZ scaling dimension relationship in a
probabilistic setting.

\subsection{Critical Liouville quantum gravity}

The study of two dimensional random surfaces makes frequent use of
the Riemann uniformization theorem, which states that every smooth
simply connected Riemannian manifold $\mathcal M$ can be conformally
mapped to either the unit disc $\D$, the complex plane $\C$, or the
complex sphere $\C \cup \{\infty\}$. (If a manifold is not simply
connected then its universal cover can be conformally mapped to one
of these spaces. See, e.g., Chapter 4 of \cite{MR1139765} for more
exposition; see also \cite{MR2165685, MR2165683, MR2131879,
MR1958012, MR2339977} for approximation algorithms and beautiful
computer illustrations of these maps.) Another way to say this
is that $\mathcal M$ can be parameterized by points $z = x+iy$ in
one of these spaces in such a way that the metric takes the form
$e^{\lambda(z)}(dx^2 + dy^2)$ for some real-valued function
$\lambda$.  The $(x,y)$ are called {\em isothermal coordinates} or
{\em isothermal parameters} for $\mathcal M$.  In most of this paper
we let the parameter space be a general simply connected proper
subdomain $D$ of the plane (which, of course, is conformally
equivalent to $\D$).

We remark that the existence of isothermal coordinates does not
require that $\mathcal M$ be smooth; for example, it can be deduced
whenever $\mathcal M$ can be parameterized by a simply connected
planar domain in which the metric has the form $E(x,y)dx^2 +
2F(x,y)dxdy + G(x,y)dy^2$ where $EG-F^2 > 0$, $E>0$, and $E$, $F$,
and $G$ are $\beta$-H\"older continuous for some $0 < \beta < 1$
\cite{MR0074856}.

Length, area, and curvature are easy to express in isothermal
coordinates.  The length of a path in $\mathcal M$ parameterized by
a smooth path $P$ in $D$ is given by
$$\int_P e^{\lambda(s)/2} ds,$$ where $ds$ is
the Euclidean length measure on $D$. Given a measurable subset $A$
of $D$, the integral $\int_A e^{\lambda(z)} dz$ (where $dz$ denotes
Lebesgue measure on $D$) is the area of the portion of $\mathcal M$
parameterized by $A$.  The function $K = - e^{-\lambda} \Delta
\lambda$ (where $\Delta \lambda = \lambda_{xx} + \lambda_{yy}$ is
the Laplacian operator) is called the {\bf Gaussian curvature} of
$\mathcal M$.  If $A$ is a measurable subset of the $(x,y)$
parameter space, then the integral of the Gaussian curvature with
respect to the portion of $\mathcal M$ parameterized by $A$ can be
written $\int_A e^{\lambda(z)} K(z) dz = \int_A -\Delta \lambda(z)
dz$ where $dz$ denotes Lebesgue measure on $D$. In other words,
$-\Delta \lambda$ gives the density of Gaussian curvature in the
isothermal coordinate space.  In particular, $\mathcal M$ is flat if
and only if $\lambda$ is harmonic.


The above suggests that one can study random simply connected
Riemannian manifolds by studying random functions $\lambda$ on $\C$
or $\C \cup \{\infty\}$ or any fixed simply connected subdomain $D$
of $\C$. In the probabilistic formulation of the so-called critical
Liouville quantum gravity, $\lambda$ is taken to be a multiple of
the Gaussian free field (GFF), although some care will be required
to make sense of this construction, since the GFF is a distribution
and not a function.  (The relationship between our probabilistic
formulation and the original formulation of Polyakov will be
discussed in Section \ref{s.coordchange}.)

For concreteness, let $h$ be an instance of a centered GFF on a
bounded simply connected domain $D$ with zero boundary conditions.
This means that $h = \sum_n \alpha_n f_n$ where the $\alpha_n$ are
i.i.d.\ zero mean unit variance normal random variables and the
$f_n$ are an orthonormal basis, with respect to the inner product
$$(f_1, f_2)_\nabla := (2\pi)^{-1} \int_D \nabla f_1(z) \cdot \nabla f_2(z) dz,$$
of the Hilbert space closure $H(D)$ of the space $H_s(D)$ of
$C^\infty$ real-valued functions compactly supported on $D$.
Although this sum diverges pointwise almost surely, it does converge
almost surely in the space of distributions on $D$, and one can also
make sense of the mean value of $h$ on various sets. (See
\cite{\GFFSurvey} for a detailed account of this construction of the
GFF; see Section \ref{ss.GFFnormsection} for a quick overview. Note
that the $(2 \pi)^{-1}$ in the definition above does not appear,
e.g., in \cite{\GFFSurvey}; including this factor in the definition,
as is common in the physics literature, is equivalent to multiplying
the corresponding $h$ by $\sqrt{2\pi}$. This will simplify some of
our formulas later on.  In particular, in this formulation the two
point covariance scales like $-\log(|z-w|)$ instead of $-(2\pi)^{-1}
\log(|z-w|)$; see Section \ref{ss.GFFnormsection}.)

Given an instance $h$ of the Gaussian free field on $D$, let
$h_\varepsilon(z)$ denote the mean value of $h$ on {$\partial B_\varepsilon(z)$,} the circle of
radius $\varepsilon$ centered at $z$ (where $h(z)$ is defined to be
zero for $z \in \C \setminus D$). This is almost surely a locally
H\"older continuous function of $(\varepsilon, z)$ on $(0,\infty)
\times D$ (see Section \ref{ss.GFFnormsection}). For each fixed
$\varepsilon$, consider the surface $\mathcal M_\varepsilon$
parameterized by $D$ with metric $e^{\gamma h_\varepsilon(z)}(dx^2 +
dy^2)$.  We would like to define a surface $\mathcal M$
parameterized by $D$ to be some sort of limit as $\varepsilon \to 0$
of these surfaces. Since we would not expect the limit to be a
Riemannian manifold in any classical sense, we have to state
carefully what we mean by this. There are many ways we could attempt
to make sense of this limit, depending on what quantities we focus
on. For example, we could consider
\begin{enumerate} \item The length of the shortest path connecting a
fixed pair of points in $D$.
\item The area of a fixed subset of $D$.
\item The length of a fixed smooth curve in $D$.
\item The length of a smooth boundary arc of $D$ (which becomes interesting
when $h$ is an instance of the GFF with free boundary conditions).
\end{enumerate}
Intuitively, we might expect each quantity above to scale like a
random constant times a (possibly different) power of $\varepsilon$
as $\varepsilon$ tends to zero --- i.e., we would expect that if the
$\mathcal M_\varepsilon$ were rescaled by the appropriate powers of
$\varepsilon$, the above quantities would have limits as
$\varepsilon \to 0$. Focusing on lengths of shortest paths, one
might guess that the random surfaces $\mathcal M_\varepsilon$
(rescaled by some power of $\varepsilon$) would almost surely
converge (in some natural topology on the set of metric spaces) to a
non-trivial random metric space parameterized by $D$. However, this
is not something we are currently able to prove. Focusing on areas,
one might expect that for some $\alpha$ the renormalized area
measures $\varepsilon^\alpha e^{\gamma h_\varepsilon(z)}dz$ would
almost surely converge weakly to a random measure on $D$.  This is
the limit we will construct and work with in this paper. We will
also address the lengths of fixed curves and boundary curves; see
Section \ref{boundaryKPZsection}. Although the constructions are
quite similar, we will not use the so-called Wick normal ordering
terminology in this paper (see e.g., \cite{BSimon}).  We present a
self-contained proof of the following (although similar measures
have appeared much earlier, and are called the {\em H{\o}egh-Krohn
model} \cite{MR0292433}
--- see also \cite{MR553970,MR0356761} for a discussion on the level
of Schwinger functions, and a more recent survey \cite{MR1168301}):

\begin{proposition} \label{p.hepsilonlimit}
Fix $\gamma \in [0, 2)$ and define {the zero boundary GFF $h$} and $D$ as above. Then it is
almost surely the case that as $\varepsilon \to 0$ along powers of
two, the measures $\mu_\varepsilon := \varepsilon^{\gamma^2/2}
e^{\gamma h_\varepsilon(z)}dz$ converge weakly {inside $D$} to a limiting
measure, which we denote by $\mu = \mu_h = e^{\gamma h(z)}dz$.
This remains true if we replace $h$ with a non-centered GFF on $D$
--- i.e., if we set $h = \overline h+h^0$ where $\overline h$ is the
zero boundary GFF on $D$ and $h^0$ is a deterministic, non-zero
continuous function on $D$.
\end{proposition}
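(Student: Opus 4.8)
The plan is: (i) show that, tested against any fixed nonnegative $f\in C_c(D)$, the quantity $\int f\,d\mu_\varepsilon$ is a martingale in $-\log\varepsilon$ and hence converges almost surely along $\varepsilon=2^{-k}$; (ii) prove a uniform moment bound guaranteeing the limit is a nonzero Radon measure and that $\mu_{2^{-k}}\to\mu$ weakly inside $D$; (iii) deduce the non-centered case by a soft perturbation argument. The key structural input is that, for a fixed $z$ with $\overline{B_\varepsilon(z)}\subset D$, one has $\Var h_\varepsilon(z)=-\log\varepsilon+\log C(z;D)$, where $C(z;D)>0$ is the conformal radius of $D$ viewed from $z$, and, by the Markov property of the GFF on the balls $B_\varepsilon(z)$, the process $t\mapsto h_{e^{-t}}(z)$ has independent increments --- it is a Brownian motion run from a deterministic variance offset. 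Hence $\varepsilon^{\gamma^2/2}e^{\gamma h_\varepsilon(z)}$ is a nonnegative martingale in the parameter $-\log\varepsilon$ with constant mean $C(z;D)^{\gamma^2/2}$, and therefore, by Tonelli, $M_\varepsilon^f:=\int_D f(z)\,\varepsilon^{\gamma^2/2}e^{\gamma h_\varepsilon(z)}\,dz$ is a nonnegative martingale along $\varepsilon=2^{-k}$ with respect to a suitable filtration $(\mathcal F_\varepsilon)$. Organizing a single filtration so that all centers $z\in\mathrm{supp}\,f$ are handled simultaneously (the balls $B_\varepsilon(z)$ overlap, so one cannot simply condition on the field outside their union) is the first technical point, and is dealt with via the domain Markov property. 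Martingale convergence then gives $M^f_{2^{-k}}\to M^f_0\in[0,\infty)$ almost surely.

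To rule out loss of mass I would prove $\sup_k\E\big[(M^f_{2^{-k}})^p\big]<\infty$ for some $p>1$; it is enough to do this with $f$ the indicator of a dyadic square $A$ with $\overline A\subset D$ (these bounds dominate the corresponding bounds for any $f\in C_c(D)$ by covering the support). For $\gamma\in[0,\sqrt2)$ take $p=2$: by Tonelli,
\[
\E[\mu_\varepsilon(A)^2]=\iint_{A\times A}\varepsilon^{\gamma^2}\,\E\!\big[e^{\gamma(h_\varepsilon(z)+h_\varepsilon(w))}\big]\,dz\,dw,
\]
and, using $\E e^{\gamma(h_\varepsilon(z)+h_\varepsilon(w))}=\exp\!\big(\tfrac{\gamma^2}{2}\Var(h_\varepsilon(z)+h_\varepsilon(w))\big)$ together with $\Var(h_\varepsilon(z)+h_\varepsilon(w))\le-2\log\varepsilon-2\log\max(|z-w|,\varepsilon)+O(1)$ uniformly on $\overline A$, one gets $\E[\mu_\varepsilon(A)^2]\le C\iint_{A^2}\max(|z-w|,\varepsilon)^{-\gamma^2}\,dz\,dw\le C\iint_{A^2}|z-w|^{-\gamma^2}\,dz\,dw<\infty$ since $\gamma^2<2$, uniformly in $\varepsilon$. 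For $\gamma\in[\sqrt2,2)$ every integer moment of $\mu_\varepsilon(A)$ is infinite, so one must instead control a fractional moment $p\in(1,4/\gamma^2)$ --- a nonempty range because $\gamma<2$. Here I would run the standard multiscale recursion: subdivide $A$ into its four dyadic children of half the side length, use the approximate self-similarity of the GFF (its increments between two scales have a translation- and scale-covariant law) together with the Markov property to bound $g(\varepsilon):=\E[\mu_\varepsilon(A)^p]$ in terms of $g(2\varepsilon)$, and conclude that $\sup_\varepsilon g(\varepsilon)<\infty$ precisely when $p<4/\gamma^2$. In either regime $\{M^f_{2^{-k}}\}$ is bounded in $L^p$ for some $p>1$, hence uniformly integrable, so $M^f_{2^{-k}}\to M^f_0$ also in $L^1$, and $\E M^{\mathbf 1_A}_0=\int_A C(z;D)^{\gamma^2/2}\,dz\in(0,\infty)$; in particular the limit is a.s.\ finite and not identically zero.

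Now assemble the measure. Fix a countable family $\mathcal D$ of dyadic subcubes of $D$ with closure in $D$ and a countable $\|\cdot\|_\infty$-dense family $\mathcal C$ of functions in $C_c(D)$; on one almost sure event, $\mu_{2^{-k}}(Q)$ converges for every $Q\in\mathcal D$ and $\int f\,d\mu_{2^{-k}}$ converges for every $f\in\mathcal C$. The $L^p$ bound and Doob's maximal inequality give $\sup_k\mu_{2^{-k}}(K)<\infty$ a.s.\ on every compact $K\subset D$ (cover $K$ by finitely many cubes of $\mathcal D$), which lets one pass from $\mathcal C$ to all of $C_c(D)$ by uniform approximation. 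Thus $\Lambda(f):=\lim_k\int f\,d\mu_{2^{-k}}$ defines a positive linear functional on $C_c(D)$, and by the Riesz representation theorem $\Lambda(f)=\int f\,d\mu$ for a Radon measure $\mu$ on $D$ with $\E[\mu(Q)]>0$ for $Q\in\mathcal D$; this is exactly the asserted weak convergence $\mu_{2^{-k}}\to\mu$ inside $D$. Finally, for $h=\overline h+h^0$ with $h^0$ continuous, write $\mu_\varepsilon(dz)=e^{\gamma h^0_\varepsilon(z)}\,\nu_\varepsilon(dz)$ where $\nu_\varepsilon:=\varepsilon^{\gamma^2/2}e^{\gamma\overline h_\varepsilon(z)}\,dz\to\mu_{\overline h}$ by the centered case; since $h^0$ is continuous its circle averages $h^0_\varepsilon$ converge to $h^0$ uniformly on compact subsets of $D$, and multiplying a weakly convergent sequence of locally finite measures by a locally uniformly convergent sequence of positive continuous functions preserves weak convergence, so $\mu_\varepsilon\to e^{\gamma h^0(z)}\,\mu_{\overline h}(dz)=:\mu_h$.

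The main obstacle is the uniform moment bound in the regime $\gamma\in[\sqrt2,2)$: the second moment is infinite, so the clean Gaussian computation above is unavailable and one is forced into a fractional-moment estimate, which genuinely requires the recursive multiscale argument exploiting the self-similarity of the GFF and is the technical heart of the proof. A secondary, more bookkeeping-flavored difficulty is setting up the filtration that makes $\mu_\varepsilon(A)$ a martingale with all centers $z\in A$ treated simultaneously. The remaining ingredients --- Tonelli, martingale convergence, the covariance asymptotics, the Riesz representation theorem, and the non-centered reduction --- are routine.
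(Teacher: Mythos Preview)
Your proposal has a genuine gap at the very first step: the claim that $M_\varepsilon^f=\int f\,d\mu_\varepsilon$ is a martingale in $\varepsilon$ with respect to a single filtration $(\mathcal F_\varepsilon)$ is not justified, and in fact fails for the natural candidates. For a \emph{fixed} $z$, the process $t\mapsto e^{-\gamma^2 t/2}e^{\gamma h_{e^{-t}}(z)}$ is indeed a martingale in its own filtration $\sigma(h_{\varepsilon'}(z):\varepsilon'\ge\varepsilon)$, but these filtrations differ for different $z$. If one tries the joint filtration $\mathcal F_\varepsilon=\sigma(h_{\varepsilon'}(w):w\in D,\ \varepsilon'\ge\varepsilon)$, then for $w$ near $z$ the circle $\partial B_\varepsilon(w)$ enters the ball $B_\varepsilon(z)$ and carries information about the field inside; consequently $\Var\bigl(h_{\varepsilon/2}(z)\,\big|\,\mathcal F_\varepsilon\bigr)<\log 2$ and the identity $\E\bigl[(\varepsilon/2)^{\gamma^2/2}e^{\gamma h_{\varepsilon/2}(z)}\,\big|\,\mathcal F_{\varepsilon}\bigr]=\varepsilon^{\gamma^2/2}e^{\gamma h_{\varepsilon}(z)}$ breaks down. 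You flag this as a ``technical point dealt with via the domain Markov property,'' but the Markov property only yields the needed independence when the relevant balls are \emph{disjoint} --- and, as you yourself note, here they are not. Without the martingale structure your appeal to the martingale convergence theorem collapses, and the uniform $L^p$ bound alone (even if established) does not imply almost sure convergence of $\mu_{2^{-k}}(S)$.

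The paper's argument is genuinely different and sidesteps this obstacle. It never claims a martingale property for $\mu_\varepsilon(S)$; instead it shows directly that $\E\,|\mu_{2^{-k}}(S)-\mu_{2^{-k-1}}(S)|$ decays exponentially in $k$ and invokes Borel--Cantelli. The device is to write $\mu_{2^{-k-1}}(S)$ and $\mu_{2^{-k-2}}(S)$ as averages over $y\in[0,1]^2$ of discrete Riemann sums $A_k^y$, $B_k^y$ over a shifted lattice $S_k^y$ of mesh $2^{-k}$; on this lattice the balls $B_{2^{-k-1}}(z)$ for $z\in S_k^y$ \emph{are} disjoint, so the Markov property gives conditional independence of the increments $h_{2^{-k-2}}(z)-h_{2^{-k-1}}(z)$ across lattice points, and a clean conditional second-moment formula for $|A_k^y-B_k^y|^2$ follows. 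For $\gamma^2<2$ this already gives exponential decay; for $2\le\gamma^2<4$ the paper does \emph{not} run a fractional-moment recursion but instead truncates at a threshold $h_\varepsilon(z)>\alpha\sigma^2$ with $\gamma<\alpha<2\gamma$ and handles the ``thick'' and ``thin'' contributions separately by elementary Gaussian tail estimates. Finally, note that the present proposition only asserts a.s.\ weak convergence to \emph{some} limit; nondegeneracy and the formula for $\E\,\mu(A)$ are the content of Proposition~\ref{p.hnlimit}, and it is there --- with the finite-dimensional projections $h^n$, for which the martingale property is genuine --- that uniform integrability enters.
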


For each $z \in D$, denote by $C(z;D)$ the conformal radius of $D$
viewed from $z$.  That is, $C(z;D) = |\phi'(z)|^{-1}$ where $\phi:D
\to \D$ is a conformal map to the unit disc with $\phi(z) = 0$.  The
following gives an equivalent definition of $\mu$.

\begin{proposition} \label{p.hnlimit}
Write $h = \overline h +h^0$ where $\overline h$ is the zero
boundary GFF on $D$ and $h^0$ is a deterministic continuous function
on $D$. Let $f_1, f_2, \ldots$ be an orthonormal basis for $H(D)$
comprised of continuous functions on $D$ and let $h^n$ be the
expectation of $h$ given its projection onto the span of $\{f_1,
f_2, \ldots, f_n\}$.  (In other words, $h^n$ is $h^0$ plus the
projection of $\overline h$ onto the span of $\{f_1, f_2, \ldots,
f_n\}$.)  Then $\mu = \mu_h$ (as defined in Proposition
\ref{p.hepsilonlimit}) is almost surely the weak limit for $n\to + \infty$ of the
measures
\begin{equation} \label{e.hn} \mu^n = \exp \left( \gamma h^n(z) -
\frac{\gamma^2}{2}\Var h^n(z) + \frac{\gamma^2}{2}\log C(z; D)
\right)dz.\end{equation} For each measurable $A \subset D$, we have
\begin{equation}\label{condexpect}
\mathbb E [ \mu(A) | h^n] = \mu^n(A).
\end{equation} In particular,
$$\mathbb E \mu(A) = \int_A C(z;D)^{\frac{\gamma^2}{2}}e^{\gamma
h^0(z)}dz.$$
\end{proposition}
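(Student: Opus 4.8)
The plan is to prove the conditional-expectation identity \eqref{condexpect} first; the weak convergence $\mu^n\to\mu$, the identification of the a.s.\ limit of $\mu^n(A)$, and the formula for $\mathbb E\mu(A)$ all follow from it quickly. Fix a measurable $A$ with $\overline A\Subset D$ (the general case follows afterward by monotone convergence along $A\cap\{\dist(\cdot,\partial D)>1/k\}$), write $\mathcal{F}_n=\sigma(\alpha_1,\dots,\alpha_n)$, which equals $\sigma(h^n)$ since $h^0$ is deterministic and the $f_k$ are linearly independent, and note $\Var h^n(z)=\sum_{k\le n}f_k(z)^2$ and $h^{n+1}=h^n+\alpha_{n+1}f_{n+1}$. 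A short Gaussian computation --- expand the exponential in \eqref{e.hn}, use $\mathbb E[e^{\gamma\alpha_{n+1}f_{n+1}(z)}\mid\mathcal{F}_n]=e^{\gamma^2f_{n+1}(z)^2/2}$, and interchange the conditional expectation with $\int_A dz$ by (nonnegative) Fubini --- shows that $n\mapsto\mu^n(A)$ is a nonnegative $\mathcal{F}_n$-martingale with $\mathbb E\mu^n(A)=\mu^0(A)=\int_A C(z;D)^{\gamma^2/2}e^{\gamma h^0(z)}dz<\infty$; it therefore converges almost surely.

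To connect this to the circle-average construction, decompose the zero-boundary GFF as $\overline h=\overline h^n+\overline h^{\perp,n}$ along $\mathrm{span}\{f_1,\dots,f_n\}$ and its orthocomplement, so that $h_\varepsilon(z)=(h^n)_\varepsilon(z)+(\overline h^{\perp,n})_\varepsilon(z)$, with $(h^n)_\varepsilon(z)$ being $\mathcal{F}_n$-measurable and $(\overline h^{\perp,n})_\varepsilon(z)$ centered Gaussian and independent of $\mathcal{F}_n$. Hence $\mathbb E[\mu_\varepsilon(A)\mid\mathcal{F}_n]=\varepsilon^{\gamma^2/2}\int_A e^{\gamma(h^n)_\varepsilon(z)}e^{\frac{\gamma^2}{2}\Var((\overline h^{\perp,n})_\varepsilon(z))}dz$. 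The one real computation is the exact identity $\Var(\overline h_\varepsilon(z))=\log\big(C(z;D)/\varepsilon\big)$, valid for $\varepsilon<\dist(z,\partial D)$: with $\mathbb E[\overline h(z)\overline h(w)]=-\log|z-w|+\widetilde G(z,w)$, $\widetilde G$ harmonic in each variable and $\widetilde G(z,z)=\log C(z;D)$, one applies the mean-value property twice and uses that the average of $-\log|x-y|$ over two independent uniform points on a circle of radius $\varepsilon$ is $-\log\varepsilon$. Subtracting the finite-rank quantity $\Var((\overline h^n)_\varepsilon(z))=\sum_{k\le n}((f_k)_\varepsilon(z))^2$, the powers of $\varepsilon$ cancel and
\begin{equation*}
\mathbb E[\mu_\varepsilon(A)\mid\mathcal{F}_n]=\int_A\exp\!\Big(\gamma(h^n)_\varepsilon(z)+\tfrac{\gamma^2}{2}\log C(z;D)-\tfrac{\gamma^2}{2}\textstyle\sum_{k\le n}((f_k)_\varepsilon(z))^2\Big)\,dz .
\end{equation*}

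Now let $\varepsilon\to0$ along powers of two. On the right, the integrand converges pointwise almost surely to the integrand of $\mu^n$ from \eqref{e.hn} (continuity of $h^n$ and the $f_k$ makes circle averages converge to point values), and since $\overline A\Subset D$ the subtracted term can be dropped and the remaining factors are bounded on $\overline A$ uniformly in small $\varepsilon$, so dominated convergence gives that the right side tends to $\mu^n(A)$ almost surely. On the left, $\mu_\varepsilon(A)\to\mu(A)$ holds in $L^1$ --- this is the uniform-integrability content of Proposition \ref{p.hepsilonlimit} (for $\gamma<\sqrt2$ via $L^2$-boundedness of the $\mu_\varepsilon(A)$; for $\gamma\in[\sqrt2,2)$ via uniform integrability of the truncated martingales) --- so $\mathbb E[\mu_\varepsilon(A)\mid\mathcal{F}_n]\to\mathbb E[\mu(A)\mid\mathcal{F}_n]$ in $L^1$. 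Matching the two limits gives \eqref{condexpect}. With $n=0$ this reads $\mathbb E\mu(A)=\mu^0(A)=\int_A C(z;D)^{\gamma^2/2}e^{\gamma h^0(z)}dz$, the final assertion; since $\mu(A)$ is $\sigma(h)=\bigvee_n\mathcal{F}_n$-measurable and integrable, \eqref{condexpect} together with the martingale convergence theorem identifies the a.s.\ limit of $\mu^n(A)$ as $\mathbb E[\mu(A)\mid\sigma(h)]=\mu(A)$; and applying this to the countable family of dyadic boxes $Q$ with $\overline Q\Subset D$ upgrades, by the standard argument, to the a.s.\ weak convergence $\mu^n\to\mu$ inside $D$.

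I expect the substantive obstacle to be precisely the $L^1$-convergence $\mu_\varepsilon(A)\to\mu(A)$ used above: this is the nontriviality/uniform-integrability statement at the core of Gaussian multiplicative chaos for $\gamma<2$ (for $\gamma\ge2$ even the identity $\mathbb E\mu(A)=\int_A C(z;D)^{\gamma^2/2}e^{\gamma h^0(z)}dz$ fails), and it should be extracted from, or proved alongside, Proposition \ref{p.hepsilonlimit}. The remaining ingredients --- the Fubini interchanges, the boundary layer in the variance identity (avoided by first taking $\overline A\Subset D$), and the passage from convergence on boxes to weak convergence --- are routine.
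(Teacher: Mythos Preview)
Your outline is architecturally identical to the paper's: establish the martingale property of $\mu^n(A)$, compute $\mathbb E[\mu_\varepsilon(A)\mid h^n]$ explicitly, let $\varepsilon\to 0$ on both sides, and invoke $L^1$ convergence of $\mu_\varepsilon(A)$ to $\mu(A)$ to interchange limit and conditional expectation. You also correctly flag this $L^1$/uniform-integrability step as the substantive obstacle.

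The one point to correct is the attribution. Proposition~\ref{p.hepsilonlimit} as stated gives only almost-sure convergence, and the paper does \emph{not} extract uniform integrability from it; instead the UI argument is supplied inside the proof of Proposition~\ref{p.hnlimit} itself and constitutes roughly half of that proof. The technique is also different from the truncation you sketched. One introduces the rooted (size-biased) probability measure $\Theta^S_\varepsilon \propto e^{\gamma h_\varepsilon(z)}\,dz\,dh$ on pairs $(z,h)$; under $\Theta^S_\varepsilon$, given $z$, the field $h$ is a GFF plus the deterministic bump $\gamma\xi^z_\varepsilon$, so that $t\mapsto \tilde h_{\varepsilon_0 e^{-t}}(z)-\tilde h_{\varepsilon_0}(z)$ is standard Brownian motion independent of $z$. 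One conditions on the event that this Brownian path stays below a line $a+bt$ (probability arbitrarily close to $1$ for large $a$, with $0<b<2-\gamma^2/2$); on that event the conditional expectation of $\mu_\varepsilon(S)$ is dominated by $\int_S|z-w|^{-b-\gamma^2/2}\,dw<\infty$. This yields tightness of the $\Theta^S_\varepsilon$-law of $\mu_\varepsilon(S)$, which is exactly uniform integrability of $\mu_\varepsilon(S)$ under $dh$. The rooted-measure viewpoint is worth knowing independently, since it recurs throughout the KPZ proofs. (As an aside, the exponential decay of $\mathbb E|\mu_{2^{-k}}(S)-\mu_{2^{-k-1}}(S)|$ established in the proof of Proposition~\ref{p.hepsilonlimit} does give Cauchy-in-$L^1$ and hence the $L^1$ convergence you need, so your deferral is in principle justifiable; the paper simply takes a different route.)
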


Intuitively, we interpret the pair $(D, \mu)$ as describing a
``random surface'' $\mathcal M$ parameterized conformally by $D$,
with area measure given by $\mu$.  {In the physics literature, the more
commonly used term is ``random metric''; however, we stress that we have not
endowed $D$ with a two point distance function, so we cannot mathematically interpret
``random metric'' to mean ``random metric space.''}

{In the Liouville quantum gravity literature, the term ``metric'' is used to mean alternately
a two-point distance function, a measure of areas and lengths of
curves, or a Riemannian metric tensor (usually the latter).  The first maps pairs of points to $\R^+$, the second maps sets/curves to $\R^+$, and the third maps pairs of tangent vectors to $\R$.
A smooth manifold can be equivalently
characterized by any one of these objects; however, the relationships between
these notions are less obvious for the limiting (and highly non-smooth) ``random surfaces'' $\mathcal M$ we deal with here.  The pair $(D,\mu)$ represents a conformal
parameterization of $\mathcal M$, with area measure $\mu$.  However,
further work would be required to use this structure to construct a two-point distance function on $\mathcal M$, or vice versa.  To avoid ambiguity arising from the multiple definitions
of the term ``metric'', we will use the term ``random surface'' instead of ``random metric'' in this paper to describe the pair $(D,\mu)$.}

\subsection{Scaling exponents and KPZ}

\begin{definition} \label{Bdelta} For any fixed measure $\mu$ on $D$ (which we call the ``quantum''
measure), we let $B^\delta(z)$ be the Euclidean ball centered at $z$
whose radius is chosen so that $\mu(B^\delta(z)) = \delta$.  (If
there does not exist a unique $\delta$ with this property, take the
radius to be $\sup \{ \varepsilon: \mu(B_\varepsilon(z)) \leq \delta
\}$.)
We refer to $B^\delta(z)$ as the {\bf isothermal quantum ball}
of area $\delta$ centered at $z$. In particular, if $\gamma = 0$
then $\mu$ is Lebesgue measure and $B^\delta(z)$ is
$B_{\varepsilon}(z)$ where $\delta = \pi \varepsilon^2$.
\end{definition}
Given a subset $X \subset D$, we denote the $\varepsilon$
neighborhood of $X$ by
$$B_\varepsilon(X) = \{ z : B_\varepsilon(z) \cap X \not = \emptyset \}.$$
We also define the {\bf isothermal quantum $\delta$ neighborhood} of
$X$ by
$$B^\delta(X) = \{z : B^\delta(z) \cap X \not = \emptyset \}.$$

Translated into probability language, the so-called {\bf KPZ
formula} is a quadratic relationship between the expectation fractal
dimension of a random subset of $D$ defined in terms of Euclidean
measure (which is the Liouville gravity measure with $\gamma = 0$)
and the corresponding expectation fractal dimension of $X$ defined
in terms of Liouville gravity with $\gamma \not = 0$.

Fix $\gamma \in [0,2)$ and let $\mu_0$ denote Lebesgue measure on
$D$.  We say that a (deterministic or random) fractal subset $X$ of
$D$ has {\bf Euclidean expectation dimension} $2-2x$ and {\bf
Euclidean scaling exponent $x$} if the expected area of
$B_\varepsilon(X)$ decays like $\varepsilon^{2x} =
(\varepsilon^2)^x$, i.e.,
$$\lim_{\varepsilon \to 0} \frac{ \log \mathbb E \mu_0(B_\varepsilon(X)) }{\log \varepsilon^2} = x.$$
We say that $X$ has {\bf quantum scaling exponent $\Delta$} if when
$X$ and $\mu$ (as defined above) are chosen independently we have
$$\lim_{\delta \to 0} \frac{ \log \mathbb E \mu(B^\delta(X)) }{\log \delta} = \Delta,$$
{where here $\mathbb E$ is with respect to both random variables $X$ and $\mu$.}
(Section \ref{discreteheuristicoverview} will provide some discrete
quantum gravity heuristics that motivate the idea of taking $X$ and
$\mu$ to be independent of one another, as well as our particular definition
of scaling exponent.)

The following is the KPZ scaling exponent relation. To avoid
boundary technicalities, we restrict attention here to a compact
subset of $D$. The case of boundary exponents will be dealt with in
Section \ref{boundaryKPZsection}.

\begin{theorem} \label{t.strongquantumKPZ}
Fix $\gamma \in [0,2)$ and a compact subset $\tilde D$ of $D$.  If
$X \cap \tilde D$ has Euclidean scaling exponent $x \geq 0$ then it
has quantum scaling exponent $\Delta$, where $\Delta$ is the
non-negative solution to
\begin{equation} \label{KPZ3} x = \frac{\gamma^2}{4} \Delta^2 + \left( 1 -
\frac{\gamma^2}{4}\right)\Delta.
\end{equation}
\end{theorem}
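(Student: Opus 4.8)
The plan is to compute $\mathbb E\,\mu(B^\delta(X))$ by relating the isothermal quantum ball $B^\delta(z)$ to an ordinary Euclidean ball, using the scale-invariance structure of the GFF. First I would fix a point $z \in \tilde D$ and ask: what is the typical Euclidean radius $\varepsilon$ of the quantum ball $B^\delta(z)$? Since $\mu(B_\varepsilon(z)) \approx \varepsilon^2 C(z;D)^{\gamma^2/2} e^{\gamma h_\varepsilon(z)}$ on the relevant scales (this is the content of the circle-average construction in Propositions \ref{p.hepsilonlimit} and \ref{p.hnlimit}, up to the martingale fluctuations), setting this equal to $\delta$ gives, heuristically, $\log \varepsilon \approx \tfrac{1}{2}\log\delta - \tfrac{\gamma}{2} h_\varepsilon(z) + O(1)$. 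The key probabilistic input is that $h_\varepsilon(z)$ behaves like a Brownian motion in the variable $t = -\log\varepsilon$, with variance $t$ (this is where the $(2\pi)^{-1}$ normalization convention pays off, making the covariance $-\log|z-w|$). Thus $B^\delta(z)$ is a Euclidean ball whose log-radius is random, roughly Gaussian, centered near $\tfrac12\log\delta$ but spread out by the Brownian fluctuations of the field.

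Next I would set up the two-point (or rather one-point) computation carefully. Write $\mathbb E\,\mu(B^\delta(X)) = \int_{\tilde D} \mathbb P[z \in B^\delta(X)]\, \mathbb E[\text{local mass of }\mu\text{ near }z \mid z \in B^\delta(X)]\, dz$, or more cleanly, condition on the field $h$ and integrate. The event $z \in B^\delta(X)$ is the event that the Euclidean ball of the (field-dependent) radius $\varepsilon = \varepsilon(z,\delta,h)$ meets $X$. Conditioning on the value $h_\varepsilon(z) = a$, this radius is essentially determined, and the probability that such a ball meets $X$ is, by the definition of Euclidean scaling exponent $x$, comparable to $\varepsilon^{2x} = e^{-2xt}$ (here one uses independence of $X$ and $\mu$, so $X$ sees a ``frozen'' radius). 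One then needs to weight by the quantum mass $\mu(B^\delta(z)) = \delta$ — this is why the exponent $\delta$ rather than $\delta^2$ appears on the quantum side. Carrying out the Gaussian integral over $a$ (equivalently over the Brownian path value at time $t$) with the three competing exponential factors — the Gaussian density $e^{-a^2/2t}$, the Euclidean-meeting probability $e^{-2xt}$ with $t$ tied to $a$ through $t \approx -\tfrac12\log\delta + \tfrac{\gamma}{2} a$, and the quantum-mass weight — produces, after optimizing the exponent (a Laplace/saddle-point evaluation), a power $\delta^\Delta$ with $\Delta$ given by a quadratic in $x$; solving that quadratic the other way yields precisely \eqref{KPZ3}. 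The Legendre-transform structure is exactly what converts the linear-in-$\Delta$ relation $x = \tfrac{\gamma^2}{4}\Delta^2 + (1 - \tfrac{\gamma^2}{4})\Delta$ into a clean statement.

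I expect the main obstacle to be making the heuristic ``$B^\delta(z)$ is a Euclidean ball of a nearly-deterministic radius'' rigorous: the circle average $h_\varepsilon(z)$ does not exactly determine $\mu(B_\varepsilon(z))$ — there are genuine multiplicative fluctuations coming from the orthogonal part of the field, and the map $\varepsilon \mapsto \mu(B_\varepsilon(z))$ is a random increasing process, not a deterministic function of $h_\varepsilon(z)$. Controlling these fluctuations — showing they only cost sub-polynomial corrections and do not shift the exponent — requires care, presumably a truncation/regularity argument restricting to ``good'' points $z$ where the Brownian-like process $h_\varepsilon(z)$ stays within $o(\log(1/\delta))$ of a linear profile, together with a first-moment bound showing the bad points contribute negligibly. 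A secondary technical point is interchanging the expectations over $X$ and over $\mu$ and justifying the Laplace-method evaluation uniformly over $z$ in the compact set $\tilde D$; the compactness hypothesis is exactly what keeps $C(z;D)$ and the covariance structure uniformly comparable and lets one avoid boundary effects. One must also handle separately the two regimes $x < 1$ (where $\Delta \in [0,1]$) and the behavior near $x = 1$, and check that the non-negative root of \eqref{KPZ3} is the one selected by the saddle point; when $\gamma = 0$ the relation degenerates to $x = \Delta$, consistent with $\mu$ being Lebesgue measure, which is a useful sanity check.
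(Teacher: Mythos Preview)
Your overall architecture matches the paper's: reduce to a first-passage computation for the Brownian motion $t\mapsto h_{\varepsilon_0 e^{-t}}(z)-h_{\varepsilon_0}(z)$, evaluate the resulting exponential expectation (the paper does this via an exponential martingale, equivalently via Schilder's theorem, which is your ``Laplace/saddle-point''), and then---exactly as you anticipate---close the gap between the circle-average proxy and the true quantum ball via tail estimates. You have also correctly identified the main technical obstacle; the paper spends Lemmas~\ref{l.totalmass}--\ref{l.totalmass3} showing precisely that $\mu(B_\varepsilon(z))/\mu_\odot(B_\varepsilon(z))$ has quadratic-exponential lower tails and that with uniformly positive probability the true quantum ball contains the proxy ball.

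There is, however, one concrete error and one missing mechanism. First, your formula $\mu(B_\varepsilon(z))\approx \varepsilon^2 C(z;D)^{\gamma^2/2} e^{\gamma h_\varepsilon(z)}$ is wrong: the conditional expectation given $h_\varepsilon(z)$ is $\pi\varepsilon^{\gamma Q}e^{\gamma h_\varepsilon(z)}$ with $\gamma Q=2+\gamma^2/2$ (this is $\mu_\odot$ in the paper's notation, equation~\eqref{muodot}). The missing $\varepsilon^{\gamma^2/2}$ is exactly what produces the quadratic term $\tfrac{\gamma^2}{4}\Delta^2$ in~\eqref{KPZ3}, so carrying your relation $t\approx -\tfrac12\log\delta+\tfrac\gamma2 a$ forward would give the wrong answer. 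Second, your ``quantum-mass weight'' is the right intuition but you have not said what it is: integrating against $d\mu$ is the same as size-biasing $dh$ by $e^{\gamma h_\varepsilon(z)}$, which (Cameron--Martin) shifts $h$ by $\gamma\xi_0^z$ and hence adds a drift $\gamma t$ to the Brownian motion. The paper packages this as a \emph{rooted measure} $\Theta$ on pairs $(z,h)$ (Section~\ref{s.rootedmetrics}); under $\Theta$ the stopping time becomes $T_A=\inf\{t:\mathcal B_t+(Q-\gamma)t=A\}$ with $A=-(\log\delta)/\gamma$, and $\mathbb E[e^{-2xT_A}]=\delta^\Delta$ drops out of the exponential martingale $e^{\beta\mathcal B_t-\beta^2 t/2}$ by setting $2x=\beta(Q-\gamma)+\beta^2/2$ and $\Delta=\beta/\gamma$. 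Once you insert the correct exponent $\gamma Q$ and make the Girsanov shift explicit, your sketch becomes the paper's proof.
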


It also turns out that Theorem \ref{t.strongquantumKPZ} admits the
following straightforward generalization:

\begin{theorem} \label{t.verystrongquantumKPZ}
Let $\mathcal X$ be any random measurable subset of the set of all
balls of the form $B_\varepsilon(z)$ for $\varepsilon>0$ and $z$ in
a fixed compact subset $\tilde D$ of $D$.  Fix $\gamma \in [0,2)$.
Then if
$$\lim_{\varepsilon \to 0} \frac{ \log \mathbb E \mu_0 \{z: B_\varepsilon(z) \in \mathcal X\} }{\log \varepsilon^2} = x,$$
then it follows that, when $\mathcal X$ and $\mu$ (as defined above)
are chosen independently, we have
$$\lim_{\delta \to 0} \frac{ \log \mathbb E \mu \{z: B^\delta(z) \in \mathcal X\} }{\log \delta} = \Delta,$$
where $\Delta$ is the non-negative solution to $$ x =
\frac{\gamma^2}{4} \Delta^2 + \left( 1 -
\frac{\gamma^2}{4}\right)\Delta.
$$
\end{theorem}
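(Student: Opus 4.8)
\proofof{Theorem \ref{t.verystrongquantumKPZ}}

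The plan is to establish this by the same route that proves Theorem \ref{t.strongquantumKPZ}, after noting that the only feature of the fixed set $X$ that enters that argument is the power-law decay rate of $\mathbb E\,\mu_0(B_\varepsilon(X))$; here it is replaced by the hypothesized decay rate of $\mathbb E\,\mu_0\{z:B_\varepsilon(z)\in\mathcal X\}$, and the independence of $\mathcal X$ and $h$ plays the role that independence of $X$ and $\mu$ played before. The first step is to trade the random measure $\mu$ for Lebesgue measure by a rooted-measure (Girsanov) change of coordinates, which follows from Proposition \ref{p.hnlimit}: for a nonnegative functional $F$ one has $\mathbb E\int_D F(z,h)\,\mu(dz)=\int_D C(z;D)^{\gamma^2/2}\,\widehat{\mathbb E}_z[F(z,h)]\,dz$, where under the rooted law $\widehat{\mathbb P}_z$ the field becomes $h$ plus $\gamma$ times the Green's function of $D$ with pole at $z$; equivalently, $t\mapsto h_{e^{-t}}(z)$ is then a Brownian motion of unit variance with drift $\gamma$ (plus a $z$-dependent $O(1)$ shift coming from $\log C(z;D)$). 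Since $\mathcal X$ is independent of $h$, this reweighting leaves $\mathcal X$ untouched; writing $p_\varepsilon(z):=\mathbb P_{\mathcal X}(B_\varepsilon(z)\in\mathcal X)$ and letting $r(z,\delta)$ be the (a.s.\ unique, for small $\delta$ and $z\in\tilde D$) radius with $\mu(B_{r(z,\delta)}(z))=\delta$, so that $B^\delta(z)=B_{r(z,\delta)}(z)$, we arrive at
\[
\mathbb E\,\mu\{z:B^\delta(z)\in\mathcal X\}=\int_D C(z;D)^{\gamma^2/2}\;\widehat{\mathbb E}_z\!\left[\,p_{r(z,\delta)}(z)\,\right]dz .
\]

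Next I would convert the constraint $\mu(B_\varepsilon(z))=\delta$ into a first-passage statement for that Brownian motion. A comparison uniform over $z\in\tilde D$ gives $\log\mu(B_\varepsilon(z))=\gamma h_\varepsilon(z)-(2+\tfrac{\gamma^2}{2})\log\tfrac1\varepsilon+o\!\left(\log\tfrac1\varepsilon\right)$, the error controlled in probability by moment bounds for the $\mu$-mass of small balls. With $\varepsilon=e^{-t}$ and $L:=\log(1/\delta)$, the drift-$\gamma$ description of $h_{e^{-t}}(z)$ under $\widehat{\mathbb P}_z$ turns the equation $\mu(B_{e^{-t}}(z))=\delta$ into the first-passage problem $T:=-\log r(z,\delta)\approx\inf\{t:Y_t=L\}$, where $Y_t$ is a Brownian motion of variance parameter $\gamma^2$ and (positive, since $\gamma<2$) drift $2-\tfrac{\gamma^2}{2}$, so that its inverse-Gaussian density obeys $\widehat{\mathbb P}_z(T\in dt)/dt=\exp\!\left(-\tfrac{(L-(2-\gamma^2/2)t)^2}{2\gamma^2 t}+o(L)\right)$ uniformly for $t\asymp L$ and $z\in\tilde D$. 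Substituting $\widehat{\mathbb E}_z[p_{r(z,\delta)}(z)]=\int_0^\infty p_{e^{-t}}(z)\,\widehat{\mathbb P}_z(T\in dt)$ into the display, integrating in $z$ first, using that $C(z;D)^{\gamma^2/2}$ is bounded above and below on the relevant support, and invoking the hypothesis $\int_D p_{e^{-t}}(z)\,dz=\mathbb E\,\mu_0\{z:B_{e^{-t}}(z)\in\mathcal X\}=e^{-2xt+o(t)}$, I get
\[
\mathbb E\,\mu\{z:B^\delta(z)\in\mathcal X\}=\int_0^\infty e^{-L\Psi(t/L)+o(L)}\,dt ,\qquad \Psi(\alpha):=\frac{\bigl(1-(2-\gamma^2/2)\alpha\bigr)^2}{2\gamma^2\alpha}+2x\alpha .
\]
A Laplace-type evaluation shows the right-hand side equals $\delta^{\Delta+o(1)}$ with $\Delta=\min_{\alpha>0}\Psi(\alpha)$, and solving $\Psi'(\alpha)=0$ and simplifying identifies $\Delta$ as exactly the nonnegative root of \eqref{KPZ3}. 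Running the upper and lower bounds separately --- the lower bound additionally using the two-sided nature of the hypothesis on $\mathcal X$ and a matching lower bound on $\widehat{\mathbb P}_z(T\in dt)$ near the optimal scale $t\approx\alpha^{\ast}L$ --- gives $\lim_{\delta\to0}\bigl(\log\mathbb E\,\mu\{z:B^\delta(z)\in\mathcal X\}\bigr)/\log\delta=\Delta$, as claimed.

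The main obstacle is the uniform control underlying the last two steps. First, replacing the genuine quantum area $\mu(B_\varepsilon(z))$ by the circle-average surrogate $\varepsilon^{2+\gamma^2/2}e^{\gamma h_\varepsilon(z)}$ with an error that is subpolynomial in $\varepsilon$ and uniform enough over $z\in\tilde D$ needs moment and tail estimates for the $\mu$-mass of small balls together with the approximate scale-independence of the GFF (the $\sup$ clause of Definition \ref{Bdelta} is harmless here because $\varepsilon\mapsto\mu(B_\varepsilon(z))$ is continuous and nondecreasing). Second, and more delicate: after integrating out $\mathcal X$ one integrates $p_{e^{-t}}$ not against Lebesgue measure but against the $h$-dependent measure $C(z;D)^{\gamma^2/2}\,\widehat{\mathbb P}_z(T\in dt)\,dz$, so to legitimately pull out the factor $e^{-2xt+o(t)}$ one must show this measure is comparable to Lebesgue measure on $\tilde D$ uniformly in $t$ --- i.e.\ that the $z$-dependence of the rooted circle-average process stays $O(1)$ on the compact set $\tilde D$. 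Finally, one must verify that the accumulated $o(t)$ errors, summed over the $\asymp\log(1/\delta)$ dyadic scales that genuinely contribute, still total $o(L)$.
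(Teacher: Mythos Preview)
Your strategy is exactly the paper's: pass to the rooted measure $\Theta^{\tilde D}$, under which the circle-average process at $z$ becomes a drifted Brownian motion independent of $z$; reduce the quantum-ball radius to a first-passage time; and read off the exponent by an exponential-martingale or large-deviations (Laplace) computation. Your rate function $\Psi$ and its minimization are precisely the content of Lemmas \ref{l.hittingldp} and \ref{l.hittingldp2}; this part, carried out for the surrogate ball $\tilde B^\delta(z)$, is Theorem \ref{t.expectationquantumKPZ}.

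The gap is in the comparison of the true ball $B^\delta(z)$ with the surrogate $\tilde B^\delta(z)$, i.e., of the stopping times $\overline T_A$ and $T_A$. You propose to control the error in $\log\mu(B_\varepsilon(z))=\gamma h_\varepsilon(z)-(2+\gamma^2/2)\log(1/\varepsilon)+o(\log(1/\varepsilon))$ ``by moment bounds for the $\mu$-mass of small balls.'' This will not go through for all $\gamma\in[0,2)$: as the proof of Proposition \ref{p.hepsilonlimit} already shows, second moments of the normalized mass diverge for $\gamma^2\ge 2$, and higher moments fail even earlier. The paper does not attempt a symmetric $o(\log(1/\varepsilon))$ control. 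Instead it proves a one-sided quadratic-exponential \emph{lower} tail bound on the ratio $\mu(B_\varepsilon(z))/\mu_\odot(B_\varepsilon(z))$ (Lemma \ref{l.totalmass2}, via the recursive self-similarity argument of Lemma \ref{l.totalmass}), which yields superexponential decay of $\mathbb P(\overline T_A<T_{aA})$ and hence the upper bound on the quantum exponent after letting $a\uparrow 1$. For the other direction it uses only a uniform \emph{positive-probability} bound that $\tilde B^\delta(z)\subset B^\delta(z)$ conditionally on the radius of $\tilde B^\delta(z)$ (Lemma \ref{l.totalmass3}); this gives $c\,\mathbb E[e^{-2xT_A}]\le\mathbb E[e^{-2x\overline T_A}]$ directly. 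The comparison is therefore genuinely asymmetric, and neither half comes from moment bounds.

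Your second worry, about the $z$-dependence of the rooted process, is largely unnecessary. Under $\Theta^{\tilde D}$, once you subtract $h_{\varepsilon_0}(z)$, the process $V_t=h_{\varepsilon_0 e^{-t}}(z)-h_{\varepsilon_0}(z)$ has the law of $\mathcal B_t+\gamma t$ \emph{independently of $z$}; hence $T_A$ is independent of $z$, and conditioning on $T_A$ lets you integrate $p_{e^{-T_A}}(z)$ over $z$ against $C(z;D)^{\gamma^2/2}dz$, which is bounded above and below on $\tilde D$. No uniformity beyond this boundedness is needed; this is how the paper extracts the factor $e^{-2xT_A+o(T_A)}$ from the hypothesis on $\mathcal X$.
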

(Again, expectation in the above theorem is with respect to both
random variables, $X$ and $\mu$.) We obtain Theorem
\ref{t.strongquantumKPZ} as a special case of Theorem
\ref{t.verystrongquantumKPZ} by writing $\mathcal X = \{
B_\varepsilon(z): B_\varepsilon(z) \cap X \not = \emptyset \}.$
Theorem \ref{t.verystrongquantumKPZ} allows us to consider $x$ that
are greater than $1$ (in which case the ``dimension'' $2-2x$ would
be negative). If one considers, for example, a conformal loop
ensemble on $D$ with $\kappa = 6$ (corresponding to a scaling limit
of the cluster-boundary loops in site percolation on the triangular
lattice) one could let $\mathcal X$ be the set of balls contained in
$\tilde D$ that intersect $\ell$ distinct ``macroscopic'' loops
(where ``macroscopic'' means that their diameters are greater than
some fixed constant). In this case, the value $x$ depends on $\ell$
and is called a {\bf multi-arm exponent}
\cite{1987PhRvL..58.2325S,1999HMDup,1999PhRvL..83.1359A,2001math......9120S}
and we may view the corresponding $\Delta$ as a quantum analog of
such an exponent.

As another example, for some integer $L$ fix distinct points
$z_1,z_2, \ldots, z_L$ in $D \setminus \tilde D$ and run $L$
independent Brownian motions started at the points $z_1, \ldots,
z_L$.  Then let $\mathcal X$ be the set of balls $B_\varepsilon(z)$
contained in $\tilde D$ with the property that the Brownian motions
--- stopped at the first time they intersect $\partial
B_\varepsilon(z)$
--- do not intersect one another.

In this case, the Euclidean scaling exponent $x=x_L$ is called a {\bf
Brownian intersection exponent}.  It was conjectured in \cite{1988DupKwon} and rigorously
derived in a celebrated series of papers by Lawler, Schramm, and
Werner using the Schramm-Loewner evolution with $\kappa = 6$
\cite{MR2002m:60159a, MR2002m:60159b, MR1899232}:
$$x_L = \frac{1}{24}(4L^2-1).$$
Although we will not fully explain this in this paper, there is a
close connection between \SLEk/ and Liouville quantum gravity models
with $\gamma = \sqrt{\min\{\kappa, 16/\kappa \}}$ (see Section
\ref{discreteheuristicoverview}), in agreement with the relationship between CFT
central charge $c$ and parameter $\gamma$ in Liouville quantum
gravity
\cite{MR947880,MR981529,MR1005268,1990PThPS.102..319S,Ginsparg-Moore}.
Taking $\gamma = \sqrt{16/6} = \sqrt{8/3}$ and $x_L$ as above, the
KPZ formula gives
$$\Delta_L = \frac{1}{2} \left(L-\frac{1}{2}\right),$$ which is an
affine function of $L$. The first co-author predicted several years
ago, based on an approach via discrete quantum gravity models, that
this $\Delta$ would be an affine function of $L$ (see
\cite{MR1666816,MR1723364,1999PhRvL..82..880D,MR2112128} and the
discussion in Section \ref{discreteheuristicoverview}).  The
derivation is based on a simple and general geometric argument that
discrete quantum gravity exponents should be in a certain sense
additive together with a heuristic connection between the discrete
and the continuous models. A direct calculation via discrete graphs
appears in \cite{MR1666816}.  This is related to the cascade
relations given earlier by Lawler and Werner using different
techniques \cite{MR1742883}.

Three papers that build on our work (as announced and presented in
talks and minicourses beginning in 2007, and later  in the Letter
\cite{2009arXiv0901.0277D}) have already been posted
online: Benjamini and Schramm cited the ideas of our paper to
produce an analog of Theorem \ref{t.strongquantumKPZ} in a one
dimensional cascade model; their proof uses a Frostman measure
construction in place of the large deviations construction used
here, and almost sure Hausdorff dimension in place of expectation
dimension \cite{benjamini-2008}.  A follow up paper
\cite{rhodes-2008} adapts the arguments of \cite{benjamini-2008} to
a class of cascade models, which was expanded to include (in a
revised version) a measure based on the exponential of the Gaussian
free field, like the measures we construct here. Another paper provides a
heuristic heat kernel based derivation of the KPZ relation \cite{2008arXiv0810.2858D}.

Intuitively, one reason to expect Hausdorff-like variants of KPZ to
be accessible is that the second moments (and higher moments) of the
random measures are essentially trivial to compute (see Section
\ref{ss.randommetrics}).  It might be interesting to try to derive
other variants of KPZ: for example, one could try to relate the
actual Minksowki or Hausdorff measure of a set, in the Euclidean
sense, with some kind of expected Minkowski or Hausdorff measure in
the quantum sense. We will not address these alternative
formulations here. However, we will present below a picturesque
formulation of KPZ in terms of box decompositions.

\subsection{Statement of box formulation of KPZ}

Define a {\bf diadic square} to be a closed square (including its
interior) of one of the grids $2^{-k} \mathbb Z^2$ for some integer
$k$. Let $\mu$ be any measure on $\C$.  For $\delta > 0$, we define a {\bf
$(\mu,\delta)$ box} $S$ to be a diadic square $S$ with $\mu(S) <
\delta$ and $\mu(S') \geq \delta$ where $S'$ is the diadic parent of
$S$.  Clearly, if a point $z\in \C$ does not lie on a boundary of a
diadic square---and it satisfies $\mu(\{z\}) < \delta <
\mu(\C)$---then there is a unique $(\mu,\delta)$ box containing $z$,
which we denote by $S^\delta(z)$. Let $\mathcal S_\mu^\delta$ be the
set of all $(\mu,\delta)$ boxes. The boxes in $\mathcal
S_\mu^\delta$ do not overlap one another except at their boundaries.
Thus, they form a tiling of $\R^2$ (see Figures \ref{f.QG1},
\ref{f.QG2}, and \ref{f.QG3} for an illustration of this
construction on a torus).

We remark that the $(\mu,\delta)$ boxes should not be confused with
the diadic boxes in the so-called $\delta$-{\em Calder\'on Zygmund
decomposition} of $\mu$. Readers familiar with that decomposition
may recall that while the $(\mu,\delta)$ boxes are diadic squares
$S$ with $\mu(S) < \delta \leq \mu(S')$, the $\delta$-Calder\'on
Zygmund boxes are diadic squares $S$ with $\mu(S)/\mu_0(S)
> \delta \geq \mu(S')/\mu_0(S')$, where $\mu_0$ is Lebesgue
measure.  Roughly speaking, the $\mu$ {\em measure} on each
$(\mu,\delta)$ box approximates $\delta$, while the $\mu$ {\em
density} on each Calder\'on Zygmund box approximates $\delta$.

\begin{figure}
\begin{center}
\includegraphics[scale = 1.6]{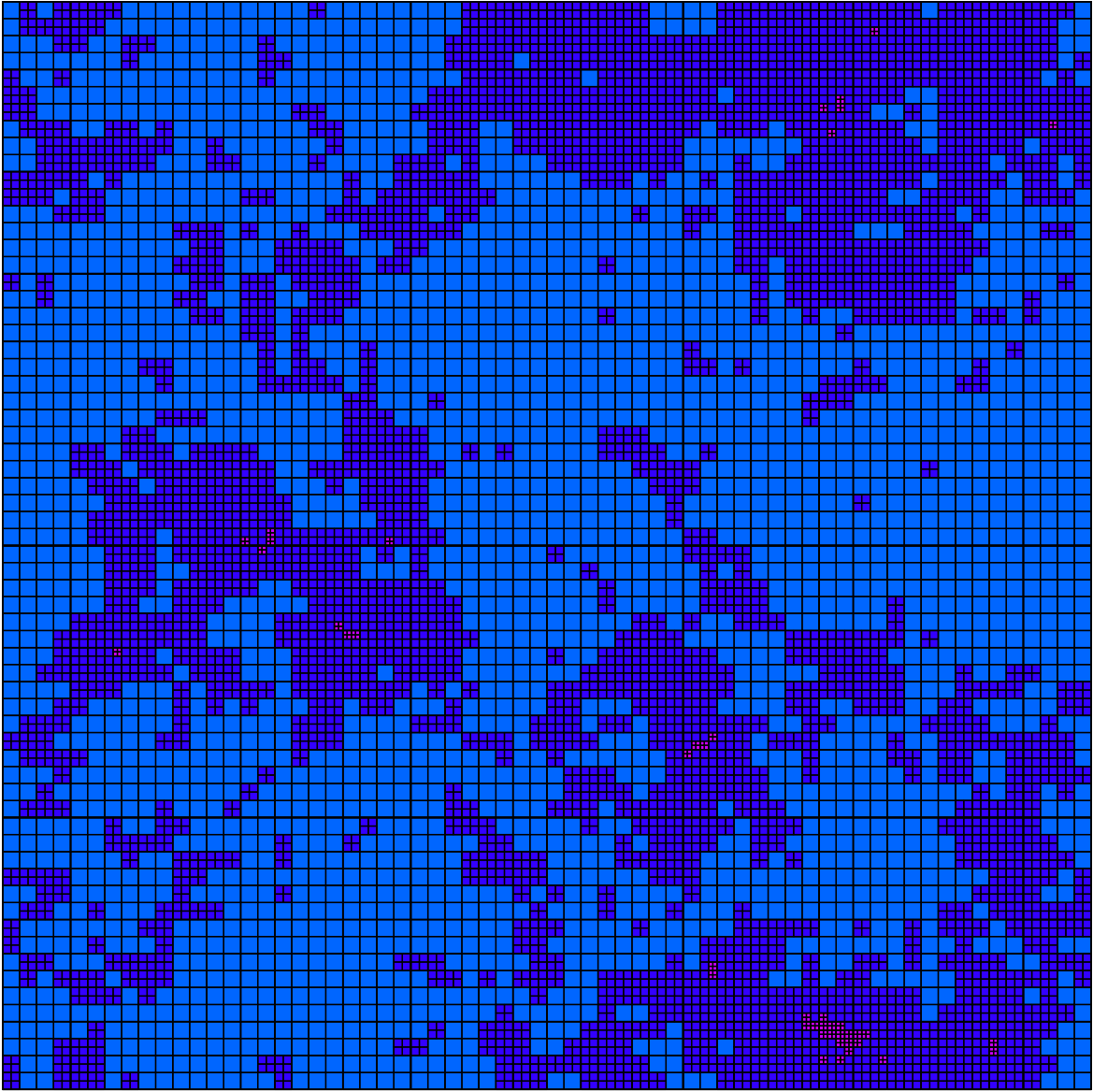}
\end{center}
\caption{\label{f.QG1} $(\mu, \delta)$ boxes of the random measure
$\mu = e^{\gamma h} dz$, where $\gamma=.5$ and $h$ is the (discrete)
Gaussian free field on a very fine ($1024 \times 1024$) grid on the
torus, $dz$ is counting measure on the vertices of that grid, and
$\delta$ is $2^{-12}$ times the total mass of $\mu$. (We view $\mu$
as an approximation of the continuum Liouville quantum gravity
measure.) One way to construct this figure is to view the entire
torus as a square; then subdivide each square whose $\mu$ measure is
at least $\delta$ into four smaller squares, and repeat until all
squares have $\mu$ measure less than $\delta$. The squares shown
have roughly the same $\mu$ size --- in the sense that each square
has $\mu$ measure less than $\delta$ but each square's diadic parent
has $\mu$ measure greater than $\delta$.}
\end{figure}

\begin{figure}
\begin{center}
\includegraphics[scale = 1.6]{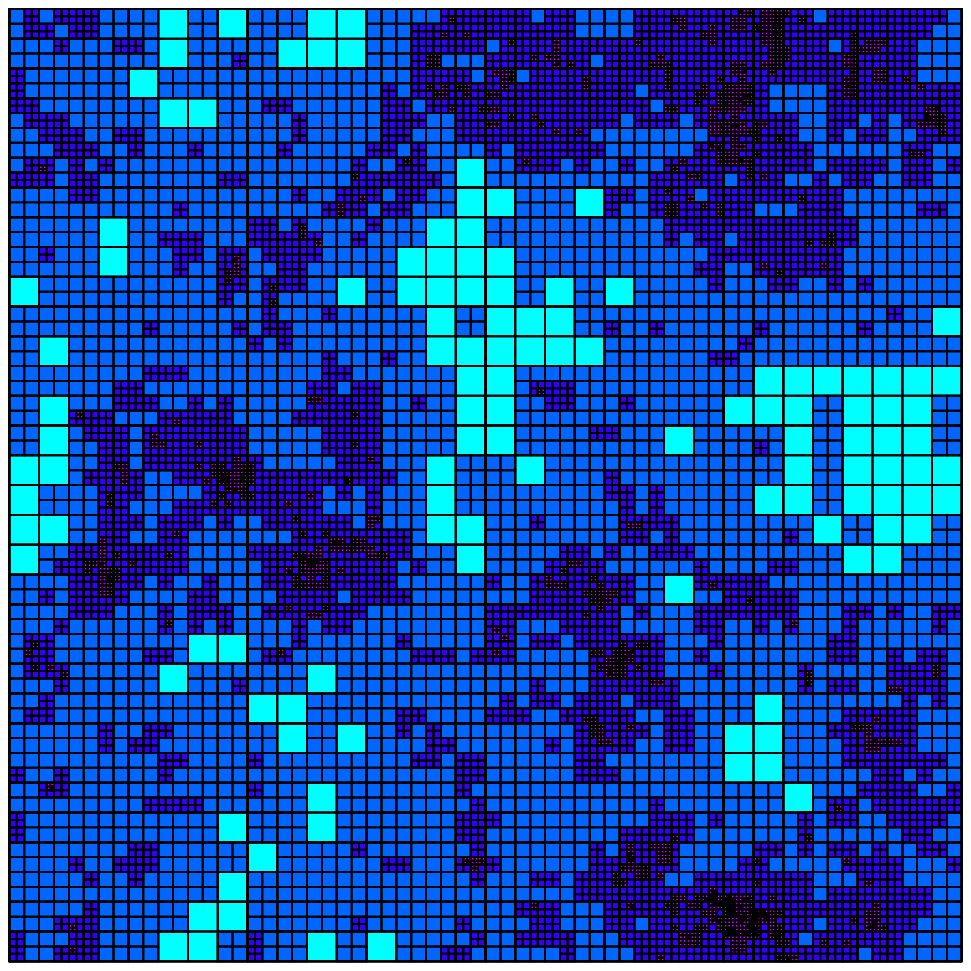}
\end{center}
\caption{\label{f.QG2} Analog of Figure \ref{f.QG1} with $\gamma =
1$, using the same instance $h$ of the GFF.}
\end{figure}

\begin{figure}
\begin{center}
\includegraphics[scale = 1.6]{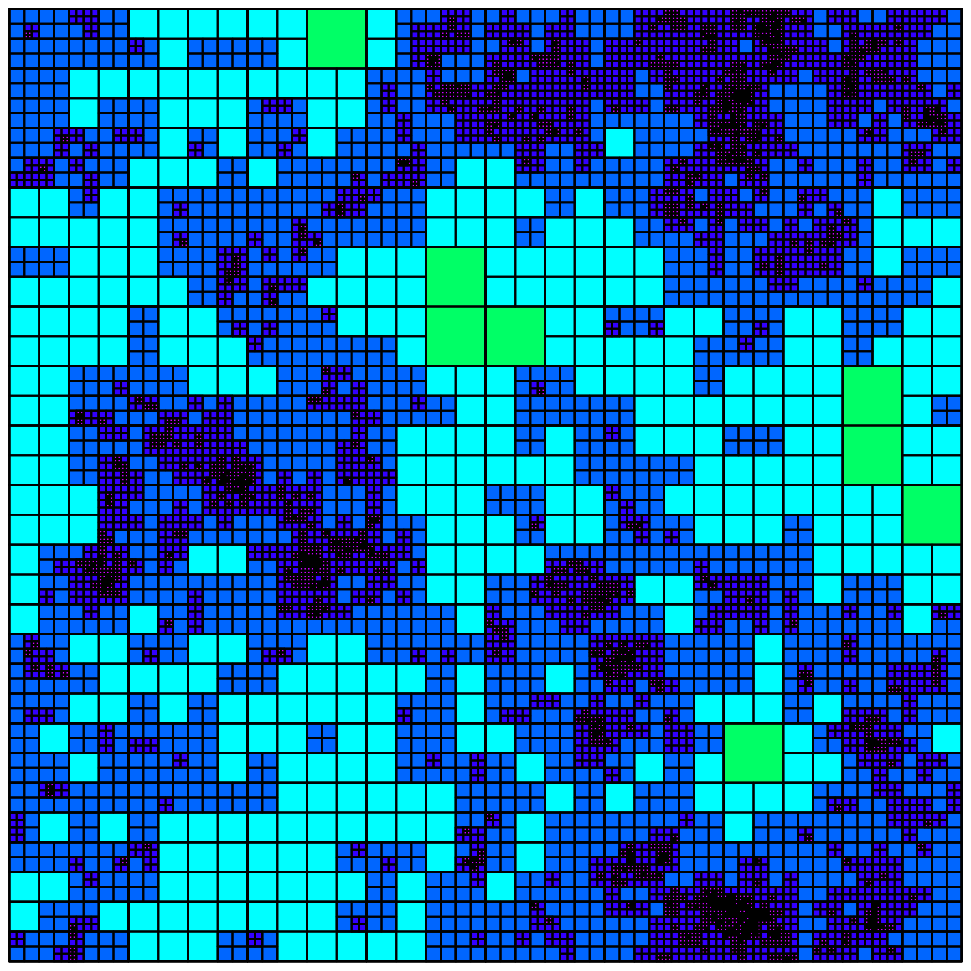}
\end{center}
\caption{\label{f.QG3} Analog of Figure \ref{f.QG1} with $\gamma =
1.5$, using the same instance $h$ of the GFF.}
\end{figure}

When $\varepsilon$ is a power of $2$, analogously define
$S_\varepsilon(z)$ to be the diadic square containing $z$ with edge
length $\varepsilon$. Likewise, define
$$S_\varepsilon(X) = \{z:S_\varepsilon(z) \cap X \not = \emptyset\},$$
$$S^\delta(X) = \{z : S^\delta(z) \cap X \not = \emptyset \}.$$  The
following gives the equivalence of the scaling dimension definition
when boxes are used instead of balls.  (The first half is well known
and easy to verify.)

\begin{proposition} \label{p.boxscalingdimension}
Fix $\gamma \in [0,2)$ and let $X$ be a random subset of a
deterministic compact subset $\tilde D$ of $D$. Let $N(\mu, \delta, X)$ be the
number of $(\mu, \delta)$ boxes intersected by $X$ and
$N(\varepsilon, X)$ the number of diadic squares intersecting $X$
that have edge length $\varepsilon$ (a power of $2$).  Then $X$ has
Euclidean scaling exponent $x \geq 0$ if and only if
$$\lim_{\varepsilon \to 0} \frac{ \log \mathbb E [\mu_0(S_\varepsilon(X))] }{\log \varepsilon^2} =
\lim_{\varepsilon \to 0} \frac{ \log \mathbb E [\varepsilon^2 N(\varepsilon, X)] }{\log \varepsilon^2} = x,$$
or equivalently,
$$\lim_{\varepsilon \to 0} \frac{ \log \mathbb E [N(\varepsilon, X)] }{\log \varepsilon^2} = x-1.$$

Similarly, the following are equivalent
\begin{enumerate}
\item $X$ has {\bf quantum scaling exponent $\Delta$}.
\item When $X$ and $\mu$ (as defined above) are chosen
independently we have
\begin{equation} \label{e.qse} \lim_{\delta
\to 0} \frac{ \log \mathbb E[\mu(S^\delta(X))] }{\log
\delta} = \Delta.\end{equation}
\item When $X$ and $\mu$ (as defined above) are chosen
independently we have
\begin{equation} \label{e.ne} \lim_{\delta \to 0} \frac{ \log \mathbb E[N(\mu, \delta, X)] }{\log \delta} = \Delta-1.\end{equation}
\end{enumerate}
\end{proposition}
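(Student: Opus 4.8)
The plan is to prove the Euclidean equivalence and the quantum equivalence by the same elementary counting argument, the only difference being which measure is used to define the boxes. For the Euclidean part, note that $S_\varepsilon(X)$ is exactly the union of the $N(\varepsilon,X)$ diadic squares of edge length $\varepsilon$ that meet $X$, and each such square has Lebesgue measure $\varepsilon^2$; hence $\mu_0(S_\varepsilon(X)) = \varepsilon^2 N(\varepsilon,X)$ identically, which gives the first displayed equality for free, and dividing by $\log\varepsilon^2$ and subtracting $1$ yields the ``equivalently'' statement. The remaining content of the Euclidean half is that the ball-based definition of $x$ (expected area of $B_\varepsilon(X)$) agrees with the square-based one; this follows because $B_{\varepsilon}(X)$ and $S_{\varepsilon}(X)$ are sandwiched between each other up to a bounded multiplicative change in $\varepsilon$ --- concretely $S_\varepsilon(X) \subset B_{c\varepsilon}(X)$ and $B_\varepsilon(X) \subset S_{\varepsilon'}(X)$ for $\varepsilon' $ the next power of two above $c\varepsilon$, with $c = \sqrt2$ --- so $\mathbb E\mu_0$ of one is comparable to $\mathbb E\mu_0$ of the other with $\varepsilon$ changed by a bounded factor, and passing to the $\log/\log$ limit the bounded factor washes out. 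This is the ``well known and easy to verify'' first half.

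For the quantum half I would show (1)$\Leftrightarrow$(2) and (2)$\Leftrightarrow$(3) separately. The equivalence (2)$\Leftrightarrow$(3) is again essentially an identity: $S^\delta(X)$ is the union of the $N(\mu,\delta,X)$ boxes in $\mathcal S_\mu^\delta$ meeting $X$, and each $(\mu,\delta)$ box $S$ satisfies $\mu(S) < \delta$ while its diadic parent $S'$ satisfies $\mu(S')\ge\delta$, so $\mu(S)\le\mu(S')<4\delta$ is \emph{not} immediate --- rather, using additivity, $\mu(S') = \sum$ of $\mu$ over its four children, each $<\delta$ except possibly... actually the cleanest bound: $\tfrac14\mu(S') \le \mu(S)$ fails in general, but we do have the two-sided bound $\delta/?\, $. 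The correct statement I will use is simply $\mu(S)<\delta$ for the upper bound and, for the lower bound, that $S$ has a sibling-or-self argument: its parent $S'$ has $\mu(S')\ge\delta$ and is the union of $S$ with three other $(\le\delta)$-mass squares, so at least one of the four children of $S'$ (not necessarily $S$) has mass $\ge\delta/4$. This gives $\sum_{S\in\mathcal S_\mu^\delta} \mu(S) \le \mathbb E\mu(S^\delta(X))$-type control only in expectation after summing over a generation; so instead I will argue directly that $N(\mu,\delta,X)\,\delta$ is comparable to $\mu(S^\delta(X))$ up to a bounded factor coming from the parent/child mass ratio, take expectations, divide by $\log\delta$, and let $\delta\to 0$; the bounded factor contributes $o(1)$ to the $\log/\log$ ratio, yielding (2)$\Leftrightarrow$(3).

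The equivalence (1)$\Leftrightarrow$(2) --- relating the ball-based quantum exponent $\Delta$ from the isothermal quantum ball $B^\delta(z)$ to the box-based quantity $\mu(S^\delta(X))$ --- is the substantive step and the main obstacle, because unlike in the Euclidean case a $(\mu,\delta)$ box and an isothermal quantum ball of mass $\delta$ need not be geometrically comparable: the measure $\mu$ is very far from uniform, so a box of $\mu$-mass $\delta$ can be Euclidean-much-larger or much-smaller than a ball of $\mu$-mass $\delta$ at the same point. The fix is to work at the level of \emph{containment of one point-set inside another measured by $\mu$}, not Euclidean containment: if $z\in B^{\delta}(X)$ then the quantum ball $B^\delta(z)$ meets $X$; this ball has $\mu$-mass $\delta$ and its intersection with $X$ forces $z$ to lie within $\mu$-bounded distance of a box meeting $X$, giving $B^\delta(X)\subset B^{c\delta}(S^{\delta}(X))$-type inclusions after a dyadic rounding of $\delta$, and conversely, so that $\mathbb E\mu(B^\delta(X))$ and $\mathbb E\mu(S^\delta(X))$ are comparable with $\delta$ changed by a bounded factor. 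Here I will need to control, uniformly in $z$, the ratio of the $\mu$-radius of a quantum ball to the $\mu$-diameter of the surrounding box, which one can do by comparing to the parent box and using that the box has mass in $[\delta/4, \delta)$-ish; once $\mathbb E\mu(B^\delta(X)) \asymp \mathbb E\mu(S^{\delta'}(X))$ with $\delta' \asymp \delta$, taking $\log$, dividing by $\log\delta$, and sending $\delta\to 0$ along powers of two collapses the bounded factors and identifies the two limits, completing the proof.
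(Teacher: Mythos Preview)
Your Euclidean half is fine and matches the paper's argument (the paper uses slightly different constants, getting $S_\varepsilon(X)\subset B_{2\varepsilon}(X)$ and $\mu_0(B_{2\varepsilon}(X))\le 9\,\mu_0(S_{2\varepsilon}(X))$, but the idea is identical).

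The quantum half has two genuine gaps.

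\textbf{For (2)$\Leftrightarrow$(3):} Your claim that ``$N(\mu,\delta,X)\,\delta$ is comparable to $\mu(S^\delta(X))$ up to a bounded factor'' is false. The upper bound $\mu(S^\delta(X))\le \delta\,N(\mu,\delta,X)$ is fine, but there is no matching lower bound: a $(\mu,\delta)$ box $S$ can have $\mu(S)$ arbitrarily small (all the parent's mass can sit in one of the other three children), so no ``parent/child mass ratio'' is available. The paper's fix is to pass to the \emph{parent} box $\hat S^\delta(z)$, which by definition has $\mu(\hat S^\delta(z))\ge\delta$. Writing $\hat N$ for the number of such parents meeting $X$, one gets the chain
\[
\mu(S^\delta(X))\ \le\ \delta\,N(\mu,\delta,X)\ \le\ 4\delta\,\hat N(\mu,\delta,X)\ \le\ 4\,\mu(\hat S^\delta(X)),
\]
and then shows separately that the analogue of (2) with $\hat S^\delta$ in place of $S^\delta$ also gives $\Delta$ (this only shifts the relevant stopping time by an additive constant).

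\textbf{For (1)$\Leftrightarrow$(2):} The containment argument you sketch cannot work as stated. You correctly identify the obstacle --- a $(\mu,\delta)$ box and an isothermal quantum $\delta$-ball at the same point need not be geometrically comparable --- and then assert that a ``$\mu$-radius versus $\mu$-diameter'' comparison via the parent box fixes it. It does not: there is no deterministic bound relating the Euclidean radius of $B^\delta(z)$ to the edge length of $S^\delta(z)$, nor any bound on $\mu(B^\delta(X))/\mu(S^{\delta'}(X))$ from elementary inclusions. The paper does not attempt any such comparison. Instead it goes back to the rooted measure $\Theta^{\tilde D}$ and the Brownian stopping time machinery used for Theorem~\ref{t.verystrongquantumKPZ}: one defines $\tilde T_A=-\log(\tilde\varepsilon/\varepsilon_0)$ where $\tilde\varepsilon$ is the side of the largest diadic box at $z$ of $\mu$-mass $\le\delta$, and proves
\[
\lim_{A\to\infty}\frac{\log\mathbb E\,e^{-2x\tilde T_A}}{\log\mathbb E\,e^{-2x T_A}}=1.
\]
This requires (i) a modified Lemma~\ref{l.totalmass2} with $\mu(S_{\varepsilon/2}(z))$ in place of $\mu(B_\varepsilon(z))$, giving quadratic-exponential tails for $\mu(S_{\varepsilon/2}(z))/\mu_\odot(B_\varepsilon(z))$, and (ii) a modified Lemma~\ref{l.totalmass3} showing that, conditionally on $T_A$, the $\Theta^{\tilde D}$ probability of $\tilde T_A<T_A+\log\rho$ is bounded below (the paper gets this from the original Lemma~\ref{l.totalmass3} via $B_{\overline\varepsilon}(z)\subset B_{4\tilde\varepsilon}(z)$). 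In short, the equivalence (1)$\Leftrightarrow$(2) is proved by rerunning the KPZ argument, not by a set-theoretic sandwich; the GFF tail estimates are essential and your proposal does not invoke them.
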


Of course, this immediately implies the following restatement of
Theorem \ref{t.strongquantumKPZ} in terms of boxes instead of balls:

\begin{corollary} \label{c.strongquantumKPZ2}
Fix $\gamma \in [0,2)$ and a compact subset $\tilde D$ of $D$ and
$X$ and $\mu$ as above. Then if
$$\lim_{\varepsilon \to 0} \frac{ \log \mathbb E [N(\varepsilon, X)] }{\log \varepsilon^2} = x-1.$$
for some $x > 0$ then
$$\lim_{\delta \to 0} \frac{ \log \mathbb E[N(\mu, \delta, X)] }{\log \delta} = \Delta-1,$$
where $\Delta$ is the non-negative solution to (\ref{KPZ3}).
\end{corollary}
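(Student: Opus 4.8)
The plan is to deduce Corollary \ref{c.strongquantumKPZ2} directly by chaining together Proposition \ref{p.boxscalingdimension} and Theorem \ref{t.strongquantumKPZ}; no genuinely new argument is needed, since all the substantive content lives in those two results. Concretely, I would pass from the box hypothesis to the ``Euclidean scaling exponent $x$'' formulation via the first (well-known) half of Proposition \ref{p.boxscalingdimension}, apply Theorem \ref{t.strongquantumKPZ} to obtain the ``quantum scaling exponent $\Delta$'' with $\Delta$ the non-negative root of \eqref{KPZ3}, and then convert back to the box formulation via the equivalence of items (1) and (3) in the second half of Proposition \ref{p.boxscalingdimension}.

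In detail: first I would invoke Proposition \ref{p.boxscalingdimension} to record that the assumption
$$\lim_{\varepsilon \to 0} \frac{ \log \mathbb E [N(\varepsilon, X)] }{\log \varepsilon^2} = x-1$$
is equivalent to the statement that $X$ (i.e.\ $X\cap\tilde D$) has Euclidean scaling exponent $x$, and since we are assuming $x>0$ this in particular falls in the range $x\geq 0$ to which Theorem \ref{t.strongquantumKPZ} applies. Next I would apply Theorem \ref{t.strongquantumKPZ}, with the same $\gamma$ and the same compact set $\tilde D$, to conclude that $X\cap\tilde D$ has quantum scaling exponent $\Delta$, where $\Delta$ is the non-negative solution of
$$x = \frac{\gamma^2}{4}\Delta^2 + \left(1 - \frac{\gamma^2}{4}\right)\Delta.$$
Finally I would use the equivalence of items (1) and (3) of Proposition \ref{p.boxscalingdimension}: having quantum scaling exponent $\Delta$ is the same as
$$\lim_{\delta \to 0} \frac{ \log \mathbb E[N(\mu, \delta, X)] }{\log \delta} = \Delta-1,$$
which is exactly the asserted conclusion. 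Composing these three implications yields the corollary.

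The only bookkeeping I would bother to check is that the strict hypothesis $x>0$ is subsumed by the range $x\geq 0$ covered by the theorem, and that ``the non-negative solution to \eqref{KPZ3}'' names the same quantity $\Delta$ in the corollary as in the theorem --- which it does, since for $x>0$ and $0\le\gamma<2$ the quadratic $\tfrac{\gamma^2}{4}\Delta^2 + (1-\tfrac{\gamma^2}{4})\Delta - x$ has exactly one root in $[0,\infty)$. I do not expect any real obstacle at the level of the corollary itself: the genuine work --- the large-deviations estimate behind Theorem \ref{t.strongquantumKPZ}, and within Proposition \ref{p.boxscalingdimension} the comparison showing that a $(\mu,\delta)$ box through $z$ and the isothermal quantum ball $B^\delta(z)$ differ only by factors subpolynomial in $\delta$ (so that $N(\mu,\delta,X)$, $\mu(S^\delta(X))$ and $\mu(B^\delta(X))$ all have logarithms with the same $\delta\to 0$ asymptotics), together with the care required for the non-uniqueness of $B^\delta(z)$ and for points lying on dyadic grid lines --- has already been carried out in those statements, which we are entitled to assume.
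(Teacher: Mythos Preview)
Your proposal is correct and matches the paper's own treatment: the paper presents Corollary~\ref{c.strongquantumKPZ2} as an immediate consequence of Proposition~\ref{p.boxscalingdimension} (which equates the box and ball formulations of both scaling exponents) combined with Theorem~\ref{t.strongquantumKPZ}. Your chaining of these two results is exactly what the paper intends, and no additional argument is given there.
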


One could also phrase Theorem \ref{t.verystrongquantumKPZ} in terms
of boxes instead of balls, but for simplicity we will refrain from
doing this here.

\section{Coordinate changes and the physical Liouville action}
\label{s.coordchange}

Polyakov understood early on that the Liouville quantum gravity
action becomes a free field action in the conformal gauge, but he
did not construct the random area measure the way we do. In
\cite{Polyakov:1987zb}, where Polyakov begins the KPZ derivation, he
refers to the Liouville quantum gravity action and writes
\begin{quotation} {\it ``The most simple form this formula takes is in the conformal
gauge, where $g_{ab} = e^\varphi \delta_{ab}$ where it becomes a
free field action.  Unfortunately this simplicity is an illusion.
We have to set a cut-off in quantizing this theory, such that it is
compatible with general covariance.  Generally, it is not clear how
to do this. For that reason, we take a different approach.''}
\end{quotation}
Indeed, the actual derivation given in \cite{Polyakov:1987zb} and
subsequently in Knizhnik, Polyakov, and Zamolodchikov
\cite{MR947880} is more complicated than ours and is not based on
the Gaussian free field. It does not give precise mathematical
meaning to the random surfaces. We feel that the Gaussian free field
based random measure we construct is the correct one, at least in the
sense that it is likely to arise as a scaling limit of the discrete
quantum gravity models mentioned in \cite{MR947880} (see Section
\ref{discreteheuristicoverview}).  In a way our approach is more
similar to the work of David \cite{MR981529} and of Distler and
Kawai \cite{MR1005268}, which heuristically derived KPZ from
Liouville field theory in the so-called conformal gauge.

In this section, we describe how the Liouville quantum gravity
measure we construct transforms covariantly under coordinate changes
and use this to explain the connection between the Gaussian free
field and the more familiar and more general curvature-based
definition of the Liouville action that is conventional in the
physics literature. The covariance properties of the random measures
in our point of view are very simple and agree with those postulated
in the physics literature.

If $\phi$ is a conformal map from $D$ to a domain $\tilde D$ and $h$
is a distribution on $D$, then we define the pullback $h \circ
\phi^{-1}$ of $h$ to be a distribution on $\tilde D$ defined by $(h
\circ \phi^{-1}, \tilde \density) = (h, \density)$ whenever
$\density$ is smooth and compactly supported on $D$ and $\tilde \density = |\phi'|^{-2} \density
\circ \phi^{-1}$.  (Here $\phi'$ is the complex derivative of
$\phi$, and $(h,\density)$ is the value of the distribution $h$
integrated against $\density$.) Note that if $h$ is a continuous
function (viewed as a distribution via the map $\density \to
\int_{D} \density(z) h(z) dz$), then the distribution $h \circ
\phi^{-1}$ thus defined is the ordinary composition of $h$ and
$\phi^{-1}$ (viewed as a distribution).

The following transformation rule is a simple consequence of
Proposition \ref{p.hnlimit} and the definitions above.

\begin{proposition} \label{Qtransformation}
Let $h$ be an instance of the GFF on $D$ and $\psi$ a conformal map
from a domain $\tilde D$ to $D$.  Write $\tilde h$ for the
distribution on $\tilde D$ given by $h \circ \psi + Q \log |\psi'|$
where $$Q = \frac{2}{\gamma} + \frac{\gamma}{2}.$$ Then $\mu_h$ is
almost surely the image under $\psi$ of the measure $\mu_{\tilde h}$
on $\tilde D$. That is, $\mu_{\tilde h}(A) = \mu_h(\psi(A))$ for
each Borel measurable $A \subset \tilde D$.
\end{proposition}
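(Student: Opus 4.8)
The plan is to reduce the statement to Proposition~\ref{p.hnlimit}, which realizes $\mu_h$ as the almost sure weak limit of the explicit approximants $\mu^n$ attached to an orthonormal basis of $H(D)$, and then to check that $\psi$ intertwines these approximants on $\tilde D$ and on $D$ exactly as claimed. I work with the basis--projection approximation rather than the circle-average approximation of Proposition~\ref{p.hepsilonlimit} precisely because conformal maps send circles to non-circles, whereas every ingredient of \eqref{e.hn} --- a smooth projection, its pointwise variance, and the conformal radius --- transforms transparently under $\psi$.

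First I would fix an orthonormal basis $f_1,f_2,\dots$ of $H(D)$ by continuous functions and set $\tilde f_j:=f_j\circ\psi$. Two-dimensional conformal invariance of the Dirichlet energy gives $(\tilde f_i,\tilde f_j)_\nabla=(f_i,f_j)_\nabla$, so $\{\tilde f_j\}$ is an orthonormal basis of $H(\tilde D)$ by continuous functions. Writing $h=\overline h+h^0$ with $\overline h=\sum_j\alpha_j f_j$, we get $\overline h\circ\psi=\sum_j\alpha_j\tilde f_j$ --- the zero-boundary GFF on $\tilde D$ with the same Gaussian coefficients --- while $\tilde h^0:=h^0\circ\psi+Q\log|\psi'|$ is deterministic and continuous on $\tilde D$ (note $\psi'\neq 0$ there). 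Hence $\tilde h$ is a non-centered GFF on $\tilde D$ in the sense of Proposition~\ref{p.hepsilonlimit}, $\mu_{\tilde h}$ is defined, and the level-$n$ approximant of $\tilde h$ relative to $\{\tilde f_j\}$ is precisely $\tilde h^n=h^n\circ\psi+Q\log|\psi'|$ (the deterministic shift passes through the projection unchanged, and projection onto $\operatorname{span}\{\tilde f_j\}_{j\le n}$ commutes with precomposition by $\psi$).

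Next I would substitute $\tilde h^n$ into \eqref{e.hn}, using: (i) $\tilde h^n(z)=h^n(\psi(z))+Q\log|\psi'(z)|$; (ii) $\Var\tilde h^n(z)=\sum_{j\le n}\tilde f_j(z)^2=\Var h^n(\psi(z))$, the deterministic shift carrying no variance; (iii) $C(z;\tilde D)=|\psi'(z)|^{-1}C(\psi(z);D)$, obtained by composing a uniformizing map of $D$ with $\psi$. The $\log|\psi'(z)|$ terms then combine with total coefficient $\gamma Q-\gamma^2/2=\gamma(\tfrac2\gamma+\tfrac\gamma2)-\tfrac{\gamma^2}2=2$, so
\[
\mu^n_{\tilde h}(dz)=|\psi'(z)|^2\exp\!\Big(\gamma h^n(\psi(z))-\tfrac{\gamma^2}2\Var h^n(\psi(z))+\tfrac{\gamma^2}2\log C(\psi(z);D)\Big)\,dz=|\psi'(z)|^2\,\rho^n_h(\psi(z))\,dz,
\]
where $\rho^n_h$ is the density of $\mu^n_h$. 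Since the real Jacobian of a holomorphic map equals $|\psi'|^2$, the substitution $w=\psi(z)$ gives $\mu^n_{\tilde h}(A)=\int_{\psi(A)}\rho^n_h(w)\,dw=\mu^n_h(\psi(A))$ for every Borel $A\subset\tilde D$; that is, $\mu^n_h=\psi_*\mu^n_{\tilde h}$.

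Finally I would let $n\to\infty$. By Proposition~\ref{p.hnlimit} applied on $D$ with $\{f_j\}$ and on $\tilde D$ with $\{\tilde f_j\}$, $\mu^n_h\to\mu_h$ and $\mu^n_{\tilde h}\to\mu_{\tilde h}$ weakly inside the respective domains, almost surely. For $g\in C_c(D)$ we have $g\circ\psi\in C_c(\tilde D)$, so $\int g\,d\mu^n_h=\int (g\circ\psi)\,d\mu^n_{\tilde h}\to\int (g\circ\psi)\,d\mu_{\tilde h}=\int g\,d(\psi_*\mu_{\tilde h})$, while also $\int g\,d\mu^n_h\to\int g\,d\mu_h$; since both limits are a.s.\ Radon measures on $D$ agreeing on $C_c(D)$, they coincide, i.e.\ $\mu_{\tilde h}(A)=\mu_h(\psi(A))$ for all Borel $A\subset\tilde D$. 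The points requiring care are that the almost sure limit in Proposition~\ref{p.hnlimit} is the same for both bases (used above when applying it on $\tilde D$), and the interchange of $n\to\infty$ with the pushforward near $\partial D$, which is why I test only against compactly supported functions. The algebraic heart --- the cancellation producing the coefficient $2$ --- is exactly the identity defining $Q$, and is a one-line computation once (i)--(iii) are in place; I do not expect any genuine obstacle beyond the above bookkeeping.
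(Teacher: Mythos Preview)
Your proof is correct and follows essentially the same approach as the paper: transport an orthonormal basis of $H(D)$ to $H(\tilde D)$ via conformal invariance of the Dirichlet form, show that the level-$n$ approximants of Proposition~\ref{p.hnlimit} satisfy $\mu^n_h = \psi_* \mu^n_{\tilde h}$ by computing how each term of \eqref{e.hn} transforms, and pass to the limit. The paper organizes the key algebra by splitting $Q\log|\psi'|$ into the two summands $(2/\gamma)\log|\psi'|$ (yielding the Jacobian $|\psi'|^2$) and $(\gamma/2)\log|\psi'|$ (compensating the conformal radius), whereas you combine them into the single identity $\gamma Q - \gamma^2/2 = 2$; this is the same computation, and your version is arguably cleaner.
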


\proof Using the notation of Proposition \ref{p.hnlimit}, if $f_1,
f_2, \ldots$ are an orthonormal basis for $H(D)$, then the conformal
invariance of $(\cdot, \cdot)_\nabla$ implies that $f_1 \circ \psi,
f_2 \circ \psi, \ldots$ are an orthonormal basis for $H(\tilde D)$,
and as $n \to \infty$ the functions $h^n \circ \psi$ converge in law
to the GFF on $\tilde D$, and the functions $\tilde h^n = h^n \circ
\psi + Q \log |\psi'|$ converge in law to $\tilde h$. If we define
$\tilde \mu^n$ analogously to $\mu^n$ in (\ref{e.hn}) but with $h^n$
replaced by $\tilde h^n$, then the $\tilde \mu^n$ converge weakly to
the random distribution $\tilde \mu := \mu_{\tilde h}$.

To see that $\mu$ is the image of $\tilde \mu$ under $\psi$, we will
observe that $\mu^n$ is the image of $\tilde \mu^n$ under $\psi$ for
each $n$.  To see this, consider the term $Q \log |\psi'| =
(2/\gamma) \log |\psi'| + (\gamma/2) \log |\psi'|$ in the definition
of $\tilde h$.  Adding $(2/\gamma) \log |\psi'|$ to $h^n \circ \psi$
corresponds to multiplying (\ref{e.hn}) by a factor of $|\psi'|^2$.
This compensates for the fact that the Radon-Nikodym derivative of a
measure on $\tilde D$ at a point $z$ and the Radon-Nikodym
derivative of the same measure pushed forward on $D$ at $\psi(z)$
differ by a factor of $|\psi'(z)|^2$. Adding $(\gamma/2)\log
|\psi'(z)|$ to $h^n \circ \psi$ compensates the expression
(\ref{e.hn}) for the change in conformal radius: $\log C(\psi(z); D)
- \log C(z;\tilde D) = \log |\psi'(z)|$. \qed

We interpret Proposition \ref{Qtransformation} as a rule for
changing the parametrization of a ``random surface.''  For example,
consider the random surface one constructs from the Gaussian free
field on a fixed domain $D$. Then if we are given any other domain $\tilde D$ and a
conformal map $\psi: \tilde D \to D$, we may wish to consider the
same random surface parameterized by $\tilde D$ instead of $D$.  In
this case, the transformation rule tells us that on $\tilde D$ we
should consider the Liouville quantum gravity measure defined using
$\tilde h=h\circ \psi + Q \log |\psi'|$, where $h\circ \psi$ is the
GFF on $\tilde D$ with zero boundary conditions.

We remark that one can make a similar argument when $\tilde D$ is a
curved simply connected manifold and $\psi: \tilde D \to D$ a
conformal map; the metric on $\tilde D$
--- when parameterized by $D$ using the map $\psi^{-1}$ --- takes
the form $e^{\lambda(z)} (d x^2 + d y^2)$, for $z \in D$, where we write
$ \lambda (\psi(w)) = -2\log |\psi'(w)|$ for $w \in \tilde D$.
Although we will not prove it here, the analog of Proposition
\ref{p.hepsilonlimit} for smooth curved surfaces is straightforward,
and the transformation rule Proposition \ref{Qtransformation}
remains the same in this case; as in the flat case, the law of the
Liouville quantum gravity measure on $D$ pulled back to $\tilde D$
is that of $\tilde h= h\circ \psi + Q \log |\psi'|$ where $h\circ
\psi$ is the GFF on $\tilde D$ with zero boundary conditions.
(Alternatively, we may take this as a definition of the Liouville
quantum gravity measure on curved $\tilde D$ with zero boundary
conditions.)

The remainder of this subsection describes the connection between
our notation and the common physics literature Liouville gravity
notation.  (This discussion can be skipped, on a first read, by
readers with no prior familiarity with the latter.) What we call the
GFF on $D$ (with the $1/2\pi$ normalization, as discussed in the
introduction) is often written (sometimes without a rigorous
definition) as the measure $e^{-S(h)}dh$, where
$$S(h) = \frac{1}{4\pi} \int_D \nabla h(z) \cdot \nabla h(z)dz$$ is
called the {\bf action} and $dh$ is defined heuristically as a
``uniform measure on the space of all functions.''  (Of course, the
latter makes perfect sense if one considers only a finite
dimensional vector space of functions, such as real-valued functions
defined on the vertices of a lattice, or functions whose Fourier
coefficients beyond a certain frequency threshold are identically
zero---in this case $dh$ would be the Lebesgue measure on the vector
space.) In this paper, we will write
$$(h,h)_\nabla := \frac{1}{2\pi} \int_D \nabla h(z) \cdot \nabla
h(z)dz,$$ so that the above becomes $S(h) = \frac12 (h, h)_\nabla$.

In the following, let $D$ be a subdomain of $\C$ and $\tilde D$ a
possibly curved surface for which there is a conformal map $\psi:
\tilde D \to D$.  Write $\tilde h^0 = \log |\psi'|$. Now, if we
switch parametrization to $\tilde D$, we are adding $Q \tilde h^0$
deterministically to $h\circ \psi$ to get $\tilde h$, so we may
rewrite the action as
$$S = \frac12 (\tilde h - Q \tilde h^0, \tilde h - Q \tilde h^0)_\nabla,
$$
which (at least when $\tilde h^0$ is smooth and compactly supported)
is seen by integrating by parts to be equivalent (up to the additive
constant $\frac12 \|Q\tilde h^0\|_\nabla^2$) to
\begin{equation}\label{e.transformedaction} S = \frac1{4\pi} \int_{\tilde D} dw \left( \nabla
\tilde h(w) \cdot \nabla \tilde h(w) + 2\tilde h(w) Q \Delta \tilde
h^0(w)\right),
\end{equation} where the pairing $\nabla \tilde h(w) \cdot \nabla \tilde h(w)$ and the Laplacian
$\Delta \tilde h^0(w)$ are now defined using the metric on $\tilde
D$ and where now $dw$ represents the measure on $\tilde D$ instead
of $D$.  This can also be written
\begin{equation}\label{e.transformedaction2} S = \frac1{4\pi} \int_{\tilde D} dw \left( \nabla
\tilde h(w) \cdot \nabla \tilde h(w) + Q \tilde h(w) K(w) \right),
\end{equation}
where $K$ is the Gaussian curvature of $\tilde D$ and $dw$ is
integration with respect to the curved metric.  (When $\tilde h^0$ is not
compactly supported, the formula can be modified to include a term
for boundary curvature, but we will not discuss this here.)

Adding in one additional term which is a constant $\mu_{\mathcal L}$ times the
total area of $\tilde D$ (and making the following symbol
substitutions: $b = \gamma/2$, $\varphi = \tilde h$, $g$ is the
underlying metric of $\tilde D$, and $j$ and $k$ are summed-over
indices ranging over the two tangent space directions), we obtain
the more familiar formula for the Liouville action:
$$S_{\mathcal L} = \frac{1}{4 \pi} \int_{\tilde D} dw \sqrt{g} \left(g^{jk} \partial_j \varphi \partial_k \varphi
+ Q K \varphi + 4 \pi \mu_{\mathcal L} e^{2 b \varphi} \right),$$ where $Q = b +
b^{-1}$.  The action is defined similarly when free boundary
conditions are used instead of zero boundary conditions --- or when
$\tilde D$ is a compact Riemann surface of some genus.  (In this
case, $e^{-S_{\mathcal L}(\varphi)} d\varphi$ is an infinite measure, although it
can be ``localized,'' e.g., by requiring the mean value of $\varphi$
to be zero.)

This paper will focus exclusively on the case $\gamma \in [0,2)$
(which is said to correspond to physical models below the central
charge $c=1$ threshold) and $\mu_{\mathcal L}=0$ (the so-called {\bf critical}
Liouville quantum gravity). The string theory and quantum gravity
literatures deal with other parameter choices as well --- including
non-zero $\mu_{\mathcal L}$ and complex values for $\gamma$ and $Q$ --- but these
appear to be beyond the scope of our methodology, in part because,
when $S_{\mathcal L}$ is complex valued, the expression $e^{-S_{\mathcal L}(\varphi)}d\varphi$
is no longer a probability measure in even a heuristic sense.

\section{Constructing the random measures}

\subsection{GFF definition and normalization} \label{ss.GFFnormsection}
Let $D$ be a bounded planar domain and let $dz$ denote Lebesgue
measure on $D$. We assume the reader is familiar with the Gaussian
free field, as defined, e.g., in \cite{\GFFSurvey}, but we briefly
review the definition here. As described earlier, to make our
formulas consistent with the physics literature, the definitions of
Green's function and the Dirichlet form will differ from the ones in
\cite{\GFFSurvey} by factors of $2\pi$.


Again, let $H_s(D)$ be the space of $C^\infty$ real-valued functions
compactly supported on $D$.  We define the Dirichlet inner product
$$(f_1, f_2)_\nabla := (2\pi)^{-1} \int_D \nabla f_1(z) \cdot \nabla f_2(z) dz,$$
on $H_s(D)$.  Then an instance $h$ of the Gaussian free field (GFF)
may be viewed as a standard Gaussian on the Hilbert space closure
$H(D)$ of $H_s(D)$ (i.e., as a sum of the form $\sum_n \alpha_n f_n$
where $f_n$ are any orthonormal basis for $H(D)$)
--- the sum converges almost surely in the space of distributions on
$D$, see \cite{\GFFSurvey}.  In fact, we may define $(h, f)_\nabla$
as random variables for non-smooth $f$ as well; these are zero mean
Gaussian random variables for each $f \in H(D)$, and
\begin{equation}
\label{Cov}
\Cov
\bigl((h, f_1)_\nabla,(h, f_2)_\nabla \bigr) = (f_1, f_2)_\nabla.
\end{equation}
The collection of random variables $(h, f)_\nabla$ for $f \in H(D)$
is thus a Hilbert space (isomorphic to $H(D)$) under the covariance
inner product.

When $x \in D$ is fixed, we let $\tilde G_x(y)$ be the harmonic
extension to $y \in D$ of the function of $y$ on $\partial D$ given by
$-\log|y-x|$. Then {\bf Green's function in the domain $D$} is
defined by
$$G(x,y) = -\log|y-x| - \tilde G_x(y).$$
When $x \in D$ is fixed, Green's function may be viewed as a
distributional solution of $\Delta G(x,\cdot)= -2 \pi
\delta_x(\cdot)$ with zero boundary conditions \cite{\GFFSurvey}.
It is non-negative for all $x, y \in D$.

For any function $\density$ on $H_s(D)$, we define a function $\Delta^{-1}
\density$ on $D$ by $$
\Delta^{-1}
\density(\cdot) := -\frac{1}{2\pi} \int_D G(\cdot
,y)\,\density (y)\,dy.$$ This is a
$C^\infty$ (though not necessarily compactly supported) function in
$D$ whose Laplacian is $\density$.  We use the same notation for
more general measurable functions $\density$, as well as the case
that $\density$ is a measure.  (For example, we will sometimes speak
of the inverse Laplacian of uniform measure on a particular circle
or disc contained in $D$.)

If $f_1 = - \Delta^{-1} \density_1$ and $f_2 = -\Delta^{-1}
\density_2$, then integration by parts implies that $(f_1,
f_2)_\nabla = (2\pi)^{-1} (\density_1,-\Delta^{-1} \density_2),$
where $(\cdot, \cdot)$ denotes the standard inner product on
$L^2(D)$.  We next observe that every $h \in H(D)$ is naturally a
distribution, since we may define the map $(h, \cdot)$ by $(h,
\density) := 2\pi (h, -\Delta^{-1} \density)_\nabla$.  (It is not
hard to see that $-\Delta^{-1} \density \in H(D)$, since its
Dirichlet energy is given explicitly by \eqref{e.greencovariance}.)
When $-\Delta f = \density$, we may write $(h, \density) = 2\pi (h,
f)_\nabla$, and hence $$\Cov \bigl( (h, \density_1), (h, \density_2)
\bigr) = (2\pi)^2 (f_1, f_2)_\nabla.$$

We claim that the latter expression may be rewritten to give
\begin{equation}\label{e.greencovariance}\Cov \bigl( (h,
\density_1), (h, \density_2) \bigr) = \int_{D \times D}
\density_1(x)  G(x,y) \density_2(y)\,\, dx \, dy\end{equation} where
$G(x,y)$ is Green's function in $D$.  Since $\Delta G(x,\cdot)=-2\pi
\delta_x(\cdot)$ and $$\int_D G(x,y)\,\density_2 (y)\,dy = -2\pi
\Delta^{-1}\density_2(x),$$ we obtain \eqref{e.greencovariance} by
multiplying each side by $-\Delta f_1(x) = \density_1(x)$ and
integrating by parts with respect to $x$.

Denote by $h_\varepsilon(z)$ the average value of $h$ on the circle
of radius $\varepsilon$ centered at $z$.  Similar averages were
considered in \cite{bauer1990}.  (For this definition, we assume $h$
is identically zero outside of $D$.) Then $h_\varepsilon(z)$ is a
Gaussian process with covariances defined by
\begin{equation} \label{e.Gepsilon} G_{\varepsilon_1, \varepsilon_2}(z_1, z_2) := \Cov \left(
h_{\varepsilon_1}(z_1), h_{\varepsilon_2}(z_2) \right)
\end{equation}
given by
$$\int G(x,y) \rho_{\varepsilon_1}^{z_1}(x) \rho_{\varepsilon_2}^{z_2}(y)dxdy$$ where $\rho_{\varepsilon}^z(x)dx$
is the uniform measure (of total mass one) on $\partial
B_\varepsilon(z)$. In fact (like Brownian motion) the process
$h_\varepsilon(z)$ determines a random continuous function (of $z$
and $\varepsilon$):

\begin{proposition} \label{p.holdercontinuous}
The process $h_\varepsilon(z)$ has a modification which is almost
surely locally $\eta$-H\"older continuous in the pair $(z,
\varepsilon) \in \C \times (0, \infty)$ for every $\eta < 1/2$.
\end{proposition}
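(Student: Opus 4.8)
The plan is to verify the hypotheses of the Kolmogorov--Chentsov continuity criterion on the three--dimensional parameter space $\C\times(0,\infty)$. Since $h_\varepsilon(z)$ is a \emph{centered Gaussian} process, with covariance $G_{\varepsilon_1,\varepsilon_2}(z_1,z_2)$ as in \eqref{e.Gepsilon}, the increment $h_{\varepsilon_1}(z_1)-h_{\varepsilon_2}(z_2)$ is a centered Gaussian variable, so for every $p\ge 2$ there is a universal $c_p$ with
\begin{equation*}
\E\big[\,|h_{\varepsilon_1}(z_1)-h_{\varepsilon_2}(z_2)|^{p}\,\big]\;=\;c_p\,\big(\Var\!\big(h_{\varepsilon_1}(z_1)-h_{\varepsilon_2}(z_2)\big)\big)^{p/2}.
\end{equation*}
Hence the whole statement reduces to showing that, on each compact $K\subset\C\times(0,\infty)$, there is $C_K$ with
\begin{equation*}
\Var\!\big(h_{\varepsilon_1}(z_1)-h_{\varepsilon_2}(z_2)\big)\;\le\;C_K\,\big(|z_1-z_2|+|\varepsilon_1-\varepsilon_2|\big)\qquad\text{for }(z_1,\varepsilon_1),(z_2,\varepsilon_2)\in K .
\end{equation*}
Granting this, $\E|h_{\varepsilon_1}(z_1)-h_{\varepsilon_2}(z_2)|^{p}\le C_K'\,\|(z_1,\varepsilon_1)-(z_2,\varepsilon_2)\|^{p/2}$, and choosing $p$ with $p/2>3$, Kolmogorov--Chentsov produces a modification that is locally $\eta$-H\"older for every $\eta<\tfrac12-\tfrac3p$; letting $p\to\infty$ and using a.s.\ uniqueness of the continuous modification, a single modification works for every $\eta<1/2$ simultaneously.

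The variance bound I would obtain from the exact formula $\Var(h_{\varepsilon_1}(z_1)-h_{\varepsilon_2}(z_2))=\iint G(x,y)\,(\density_1-\density_2)(dx)\,(\density_1-\density_2)(dy)$, where $\density_i=\rho_{\varepsilon_i}^{z_i}$ is the uniform probability measure on $\partial B_{\varepsilon_i}(z_i)$, together with the decomposition $G(x,y)=-\log|x-y|-\tilde G_x(y)$. Since $\tilde G_x(y)$ is jointly smooth on $D\times D$ and the circle measures depend smoothly on center and radius, the $\tilde G$-contribution is a second difference of a $C^\infty$ function of $(z_1,\varepsilon_1,z_2,\varepsilon_2)$, hence $O\big((|z_1-z_2|+|\varepsilon_1-\varepsilon_2|)^2\big)$ on $K$. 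For the logarithmic part I would use the classical identity for the log--potential of uniform measure on a circle,
\begin{equation*}
\int -\log|w-x|\,\rho_\varepsilon^{z}(dx)\;=\;-\log\max\!\big(\varepsilon,\,|w-z|\big),
\end{equation*}
to compute the four cross terms explicitly: varying only the radius gives $\log(\varepsilon_2/\varepsilon_1)\le|\varepsilon_1-\varepsilon_2|/\varepsilon_0$ on $K$ (where $\varepsilon\ge\varepsilon_0>0$); varying only the center with $d:=|z_1-z_2|<\varepsilon$ gives $0\le 2\log(1+d/\varepsilon)\le 2d/\varepsilon_0$, using $|x-z_2|\le\varepsilon+d$ for $x\in\partial B_\varepsilon(z_1)$; and a general increment is handled by $\Var(A+B)\le 2\Var(A)+2\Var(B)$ through the intermediate point $(z_1,\varepsilon_2)$. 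Crucially $\log(1+t)\le t$, so no logarithmic correction appears and the bound is genuinely linear in the distance --- which is exactly what is needed to push the H\"older exponent up to (but not past) $1/2$. Since only small increments matter for a local modulus, one may assume $|z_1-z_2|<\varepsilon$ and $|\varepsilon_1-\varepsilon_2|$ small throughout.

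The main obstacle is not the potential-theoretic computation but the bookkeeping needed to make $C_K$ uniform and, in particular, to handle those $(z,\varepsilon)\in K$ for which the circle $\partial B_\varepsilon(z)$ is not contained in $D$. For such pairs one reads $h_\varepsilon(z)=(h,\rho_\varepsilon^z)$ with $h\equiv0$ off $D$, i.e.\ the circle measures entering the Green's-function formula must be truncated to $D$; one then has to check that the estimates above survive truncation, using that $G(x,y)\to0$ as $y\to\partial D$ and that $\tilde G_x(y)$ remains controlled, so that the truncated circle averages still depend on $(z,\varepsilon)$ with the same Lipschitz-type modulus. I would treat this near-boundary region with a separate, more careful argument. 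Granting it, the exponent $1/2$ is sharp: the same computation shows $\Cov(h_{e^{-t}}(z),h_{e^{-s}}(z))=\min(s,t)$ plus a smooth term, so for fixed $z$ the process $t\mapsto h_{e^{-t}}(z)$ is a standard Brownian motion up to a process with locally Lipschitz sample paths, and Brownian motion is already not H\"older-$1/2$; hence no choice of $p$ in the Kolmogorov step can reach or exceed exponent $1/2$.
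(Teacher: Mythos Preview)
Your approach is essentially the paper's own: reduce to a linear variance bound on compacts, lift to moment bounds via Gaussianity, and apply the multiparameter Kolmogorov--\v{C}entsov theorem with $n=3$ and $\beta=\alpha/2-3$. Your explicit computation via the log-potential identity is the hands-on version of what the paper packages as ``$\xi_\varepsilon^z$ is uniformly Lipschitz'' (through Proposition~\ref{p.hepsiloncovariance}, which gives $G_{\varepsilon_1,\varepsilon_2}(z_1,z_2)$ as the mean of $\xi_{\varepsilon_1}^{z_1}$ on $\partial B_{\varepsilon_2}(z_2)$); the two are equivalent.

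The one place where the paper is genuinely cleaner is the boundary issue you flag as ``the main obstacle.'' Rather than a separate near-boundary argument, the paper observes that the variance of $h_{\varepsilon_1}(z_1)-h_{\varepsilon_2}(z_2)$ can only \emph{increase} if $D$ is replaced by a larger domain, so one may replace $D$ by a disc $D'$ of, say, ten times its diameter. After this enlargement, every circle $\partial B_\varepsilon(z)$ with $z\in D$ and $\varepsilon$ not so large that the circle misses $D$ entirely lies well inside $D'$, and your log-potential computation goes through with no truncation. This monotonicity trick eliminates the case analysis you were anticipating and is worth adopting.
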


In other words, the H\"older regularity enjoyed by
$h_\varepsilon(z)$
--- as a function of the pair $(z,\varepsilon)$ --- is the same as that of
Brownian motion or the Brownian sheet. In fact, as we observe below
(Proposition \ref{p.browniancircleaverages}), when $z$ is fixed,
$h_\varepsilon$ {\em is} a Brownian motion with respect to the
parameter $t=-\log \varepsilon$. We may view $h_\varepsilon$ as an
approximation to $h$ that gets better as $\varepsilon \to 0$. Before
we prove Proposition \ref{p.holdercontinuous}, let us make some
observations about the covariance function $G_{\varepsilon_1,
\varepsilon_2}(z_1, z_2)$ defined in (\ref{e.Gepsilon}).  (We will
also sometimes write $G_\varepsilon(z_1, z_2) := G_{\varepsilon,
\varepsilon}(z_1, z_2)$.)

First we define the function $\xi_\varepsilon^z(y)$, for $y \in D$,
by
\begin{equation}
\label{defxi}
\xi_\varepsilon^z(y) = -\log \text{max}(\varepsilon, |z-y|)-\tilde G_{z,\varepsilon}(y),
\end{equation}
where $\tilde G_{z,\varepsilon}(y)$ is the harmonic extension to $D$ of the restriction of
$-\log \text{max}(\varepsilon, |z-y|)$ to $\partial D$. Note that $\tilde G_{z, \varepsilon} = \tilde G_z$ provided that
$B_\varepsilon(z) \subset D$.  Observe that
this $\xi_\varepsilon^z(y)$ tends to zero as $y \to
\partial D$ and that as a distribution $-\Delta \xi_\varepsilon^z$
(restricted to $D$) is equal to $2\pi \rho_\varepsilon^z$, where as
before $\rho_\varepsilon^z$ is a uniform measure on $D \cap
\partial B_\varepsilon(z)$.
Integrating by parts, we immediately have
$$
h_\varepsilon(z)=(h,\xi_\varepsilon^z)_\nabla ,
$$
and from \eref{Cov} and (\ref{e.Gepsilon}) the following:
\begin{proposition} \label{p.hepsiloncovariance}
The function $G_{\varepsilon_1, \varepsilon_2}(z_1, z_2)$ is equal to the Dirichlet inner product
$\left(\xi_{\varepsilon_1}^{z_1},\xi_{\varepsilon_2}^{z_2}\right)_\nabla$ and
to the mean value of $\xi_{\varepsilon_1}^{z_1}$ on the circle
$\partial B_{\varepsilon_2}(z_2)$.  In particular, if
$B_{\varepsilon_1}(z_1)$ and $B_{\varepsilon_2}(z_2)$ are disjoint
and contained in $D$ then $G_{\varepsilon_1, \varepsilon_2}(z_1,
z_2) = G(z_1, z_2)$.  If $B_{\varepsilon_1}(z) \subset D$ and
$\varepsilon_1 \geq \varepsilon_2$ then
$$G_{\varepsilon_1, \varepsilon_2}(z, z) = -\log \varepsilon_1 + \log
C(z; D).$$
\end{proposition}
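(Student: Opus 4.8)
The first identity is essentially already in hand. Since integration by parts gives $h_\varepsilon(z) = (h,\xi_\varepsilon^z)_\nabla$, the covariance rule \eref{Cov} immediately yields
$$G_{\varepsilon_1,\varepsilon_2}(z_1,z_2) = \Cov\bigl((h,\xi_{\varepsilon_1}^{z_1})_\nabla,\,(h,\xi_{\varepsilon_2}^{z_2})_\nabla\bigr) = \bigl(\xi_{\varepsilon_1}^{z_1},\xi_{\varepsilon_2}^{z_2}\bigr)_\nabla.$$
For the mean-value reformulation I would integrate by parts once more: because $-\Delta\xi_{\varepsilon_2}^{z_2} = 2\pi\rho_{\varepsilon_2}^{z_2}$ as distributions on $D$ and $\xi_{\varepsilon_1}^{z_1}$ vanishes on $\partial D$, the boundary term drops out and
$$\bigl(\xi_{\varepsilon_1}^{z_1},\xi_{\varepsilon_2}^{z_2}\bigr)_\nabla = \frac{1}{2\pi}\int_D \xi_{\varepsilon_1}^{z_1}(y)\,\bigl(-\Delta\xi_{\varepsilon_2}^{z_2}\bigr)(dy) = \int_D \xi_{\varepsilon_1}^{z_1}(y)\,\rho_{\varepsilon_2}^{z_2}(dy),$$
which is exactly the average of $\xi_{\varepsilon_1}^{z_1}$ over $\partial B_{\varepsilon_2}(z_2)$. (Symmetry of the Dirichlet inner product shows this also equals the average of $\xi_{\varepsilon_2}^{z_2}$ over $\partial B_{\varepsilon_1}(z_1)$, reflecting the symmetry of Green's function; either form may be used.)

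For the disjoint-balls case I would use the explicit formula \eref{defxi}. When $|y-z_1|\ge\varepsilon_1$ we have $-\log\max(\varepsilon_1,|z_1-y|) = -\log|z_1-y|$, and since $B_{\varepsilon_1}(z_1)\subset D$ the correction term is $\tilde G_{z_1,\varepsilon_1} = \tilde G_{z_1}$; hence $\xi_{\varepsilon_1}^{z_1}(y) = -\log|z_1-y| - \tilde G_{z_1}(y) = G(z_1,y)$ on $D\setminus B_{\varepsilon_1}(z_1)$. The function $y\mapsto G(z_1,y)$ is harmonic on $D\setminus\{z_1\}$, and the closed disc $\overline{B_{\varepsilon_2}(z_2)}$ lies in this set since it is disjoint from $B_{\varepsilon_1}(z_1)$. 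The mean-value property of harmonic functions then gives that the circle average of $\xi_{\varepsilon_1}^{z_1}$ over $\partial B_{\varepsilon_2}(z_2)$ equals its value at the center, i.e. $G(z_1,z_2)$.

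For the concentric case, when $|y-z|\le\varepsilon_1$ the term $-\log\max(\varepsilon_1,|z-y|) = -\log\varepsilon_1$ is constant, and again $\tilde G_{z,\varepsilon_1} = \tilde G_z$, so $\xi_{\varepsilon_1}^{z}(y) = -\log\varepsilon_1 - \tilde G_z(y)$ on $B_{\varepsilon_1}(z)$. Since $\varepsilon_2\le\varepsilon_1$, the circle $\partial B_{\varepsilon_2}(z)$ lies in $B_{\varepsilon_1}(z)$, and $\tilde G_z$ is harmonic on $D$, so the circle average is $-\log\varepsilon_1 - \tilde G_z(z)$. I would finish by invoking the standard identification $-\tilde G_z(z) = \log C(z;D)$: if $\phi:D\to\D$ is conformal with $\phi(z)=0$, then $G(z,w) = -\log|\phi(w)|$, and expanding near $w=z$ gives $G(z,w)+\log|w-z|\to-\log|\phi'(z)| = \log C(z;D)$, while the left side tends to $-\tilde G_z(z)$. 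This yields $G_{\varepsilon_1,\varepsilon_2}(z,z) = -\log\varepsilon_1 + \log C(z;D)$.

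The only steps requiring any care are the two distributional integrations by parts (justified by the fact, noted just before the statement, that $-\Delta^{-1}\density\in H(D)$, so all the $\xi$'s lie in $H(D)$ and the pairings and kink-supported Laplacians are legitimate) and the Green's-function/conformal-radius identity, for which a one-line computation or a reference suffices; everything else is elementary harmonic-function bookkeeping, so I expect no real obstacle.
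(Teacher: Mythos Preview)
Your proposal is correct and follows the same route the paper indicates: the paper merely says the proposition follows ``immediately'' from $h_\varepsilon(z)=(h,\xi_\varepsilon^z)_\nabla$ together with \eref{Cov} and the definition (\ref{e.Gepsilon}), and you have filled in exactly those details (the second integration by parts to get the circle-average form, then the explicit formula \eref{defxi} plus the mean-value property for the two special cases). The identification $-\tilde G_z(z)=\log C(z;D)$ you supply is the standard one the paper uses implicitly.
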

It then follows that
\begin{equation}
\label{xixi}
G_{\varepsilon,\varepsilon}(z,z)=\Var h_\varepsilon(z)=(\xi_\varepsilon^z,\xi_\varepsilon^z)_\nabla=
\xi_\varepsilon^z(z)=-\log \varepsilon +\log C(z;D).
\end{equation}
\proofof{Proposition \ref{p.holdercontinuous}} We first claim that
for each $\varepsilon_0$ and $D$ there exists a constant $K$ such
that
$$\Var \left( h_{\varepsilon_1}(z_1) - h_{\varepsilon_2}(z_2) \right) \leq
K\bigl[|z_1 - z_2| + |\varepsilon_1 - \varepsilon_2|\bigr]$$ for all $z_1, z_2
\in D$ and $\varepsilon_1, \varepsilon_2 \in [\varepsilon_0,
\infty)$. Since the variance can only increase if $D$ is replaced
with a larger domain, it suffices to show this holds when $D$ is
replaced by a sufficiently large disc $D'$ (say, centered in $D$
with $10$ times the diameter $r$ of $D$), and $\varepsilon$ is
restricted to values in $[\varepsilon_0, 5r]$.  (For larger values
of $\varepsilon$, the set $\partial B_\varepsilon(z)$ cannot
intersect $D$ when $z \in D$.) Since
$$\Var \left( h_{\varepsilon_1}(z_1) - h_{\varepsilon_2}(z_2) \right) =
G_{\varepsilon_1, \varepsilon_1}(z_1, z_1) - 2G_{\varepsilon_1,
\varepsilon_2}(z_1, z_2) + G_{\varepsilon_2, \varepsilon_2}(z_2,
z_2),$$it suffices to show that $G_{\varepsilon_1,
\varepsilon_2}(z_1, z_2)$ is a Lipschitz function of
$(\varepsilon_1, \varepsilon_2, z_1, z_2)$ for the range of
$(\varepsilon_1, \varepsilon_2, z_1, z_2)$ values indicated above.
This follows from Proposition \ref{p.hepsiloncovariance} and the
fact (whose proof we leave to the reader) that $\xi_\varepsilon^z$
is a Lipschitz function when $z\in D$ and $\varepsilon
> \varepsilon_0$, with a Lipschitz constant that holds uniformly
over these $\varepsilon$ and $z$ values.

The claim implies that for all $\alpha>0$ there is some $K = K(\alpha)>0$ such that
$$\mathbb E[ |h_{\varepsilon_1}(z_1) - h_{\varepsilon_2}(z_2)|^\alpha] \leq
K\bigl[|z_1-z_2| + |\varepsilon_1 - \varepsilon_2| \bigr]^{\alpha/2}.$$  This puts us in the
setting of the multiparameter Kolmogorov-\v{C}entsov theorem
\cite{MR1121940, MortersPeres}, which states the following: Suppose
that the random field $X(a)$, $a \in \prod_{i=1}^n[0,t_i]$ satisfies
$\mathbb E[|X(a) - X(b)|^\alpha] \leq K|a-b|^{n+\beta}$ for all
$a,b$, for some fixed constants $\alpha, \beta, K$.  Then there
exists an almost surely continuous modification of the random field
and this process is $\eta$-H\"older continuous for every $\eta <
\beta/\alpha$. Applying this for $n=3$ and $\beta = \alpha/2 - 3$ and large $\alpha$
allows us to deduce that $h_\varepsilon(z)$, as a function of $\varepsilon$ and $z$, is
locally $\eta$-H\"older continuous for all $\eta < 1/2$.  \qed

\begin{proposition} \label{p.browniancircleaverages} Write ${\mathcal V}_t = h_{e^{-t}}(z)$, and $t_0^z = \inf \{t:
B_{e^{-t}}(z) \subset D \}$.  If $z\in D$ is fixed, then the law of
$$V_t:={\mathcal V}_{t_0^z + t} - {\mathcal V}_{t_0^z}$$ is a standard
Brownian motion in $t$.
\end{proposition}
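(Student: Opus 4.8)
The plan is to verify that $V=(V_t)_{t\ge 0}$ is a centered Gaussian process whose sample paths are almost surely continuous, with $V_0=0$ and $\Cov(V_s,V_t)=s\wedge t$. Since these four properties characterize standard Brownian motion — the covariance already forces the increments to be independent, stationary and Gaussian, and continuity of paths supplies the remaining structure — this suffices.

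First I would dispose of the easy ingredients. For fixed $z$, each $\mathcal V_t=h_{e^{-t}}(z)=(h,\xi_{e^{-t}}^z)_\nabla$ is a centered Gaussian random variable, and these are jointly Gaussian as $t$ varies, so $V$ is a centered Gaussian process, and $V_0=0$ by construction. Almost sure continuity of $t\mapsto\mathcal V_t$ on $[t_0^z,\infty)$, hence of $V$ on $[0,\infty)$, is immediate from Proposition \ref{p.holdercontinuous}, which gives a modification of $h_\varepsilon(z)$ that is locally H\"older — in particular continuous — in $(z,\varepsilon)\in\C\times(0,\infty)$; I work with this modification throughout. The content of the proposition is therefore the covariance computation. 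For $u\ge 0$ the ball $B_{e^{-(t_0^z+u)}}(z)$ lies in $D$, so \eqref{xixi} gives $\Var\mathcal V_{t_0^z+u}=(t_0^z+u)+\log C(z;D)$, and more generally, for $0\le a\le b$, the larger of the radii $e^{-(t_0^z+a)},e^{-(t_0^z+b)}$ is $e^{-(t_0^z+a)}$ and its ball lies in $D$, so Proposition \ref{p.hepsiloncovariance} gives
\[
\Cov\bigl(\mathcal V_{t_0^z+a},\mathcal V_{t_0^z+b}\bigr)=G_{e^{-(t_0^z+a)},\,e^{-(t_0^z+b)}}(z,z)=(t_0^z+a)+\log C(z;D),
\]
i.e.\ this covariance depends only on $\min(a,b)$. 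Writing $\Cov(V_s,V_t)$ for $0\le s\le t$ as $\Cov(\mathcal V_{t_0^z+s},\mathcal V_{t_0^z+t})-\Cov(\mathcal V_{t_0^z+s},\mathcal V_{t_0^z})-\Cov(\mathcal V_{t_0^z},\mathcal V_{t_0^z+t})+\Var\mathcal V_{t_0^z}$ and substituting the formula above, the $t_0^z$ and $\log C(z;D)$ contributions all cancel and one is left with $\Cov(V_s,V_t)=s=s\wedge t$.

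The only step I expect to require any care is the appearance of the critical scale $\varepsilon=e^{-t_0^z}$ in the cross terms and the final variance term, since there the \emph{closed} ball $\overline{B_{e^{-t_0^z}}(z)}$ may be tangent to $\partial D$, so it is not entirely obvious that Proposition \ref{p.hepsiloncovariance} applies. This is resolved by noting that $\{t:B_{e^{-t}}(z)\subset D\}$ is an up-set, whence $B_{e^{-t_0^z}}(z)=\bigcup_{t>t_0^z}B_{e^{-t}}(z)\subset D$; thus the hypothesis ``$B_{\varepsilon_1}(z)\subset D$'' of that proposition does hold with $\varepsilon_1=e^{-t_0^z}$, and the covariance formula applies verbatim. (Alternatively, $\|\xi_{\varepsilon_1}^z-\xi_{\varepsilon_2}^z\|_\nabla^2=\log(\varepsilon_1/\varepsilon_2)\to 0$ as $\varepsilon_1,\varepsilon_2\uparrow e^{-t_0^z}$, so $t\mapsto\xi_{e^{-t}}^z$ extends continuously to $t=t_0^z$ in $H(D)$ and the formula passes to the limit.) Apart from this bookkeeping there is no genuine obstacle: the statement is in essence the remark, already recorded in Proposition \ref{p.hepsiloncovariance}, that $G_{\varepsilon_1,\varepsilon_2}(z,z)$ is a function of $\max(\varepsilon_1,\varepsilon_2)$ alone, which in the time variable $t=-\log\varepsilon$ is exactly the Brownian covariance $s\wedge t$.
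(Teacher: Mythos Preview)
Your proof is correct and follows essentially the same approach as the paper's: since the $h_\varepsilon(z)$ are jointly Gaussian, one only needs to read off the covariances from Proposition~\ref{p.hepsiloncovariance}, which is exactly what the paper says. Your write-up simply fills in the details the paper omits (the explicit expansion of $\Cov(V_s,V_t)$, the appeal to Proposition~\ref{p.holdercontinuous} for path continuity, and the care at the critical scale $t=t_0^z$), all of which are handled correctly.
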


\proof Since we already know that the $h_\varepsilon(z)$ are jointly
Gaussian random variables, it is enough to compute the variances of
$h_\varepsilon(z)$ and $h_{\varepsilon'}(z)$ for fixed $\varepsilon,
\varepsilon'$, and these are given in Proposition
\ref{p.hepsiloncovariance}. \qed

\subsection{Random measure: Liouville quantum gravity}
\label{ss.randommetrics}


The remainder of the paper makes frequent use of the following
simple fact, which the reader may recall (or verify): if $N$ is a
Gaussian random variable with mean $a$ and variance $b$ then
\begin{equation}\label{e.expnormal}\mathbb E \, e^N =
e^{a+b/2}.\end{equation} Since $\mathbb E h_\varepsilon(z) = 0$ when
$h$ is an instance of the GFF with zero boundary conditions, we have
$$\mathbb E e^{\gamma h_\varepsilon(z)} = e^{\text{Var}[ \gamma
h_\varepsilon(z)]/2}.$$ Recall \begin{equation} \label{bertrandequation} \Var(h_\varepsilon(z)) = G_\varepsilon(z,z) = \log C(z;D) -
\log \varepsilon\end{equation} when $B_\varepsilon(z) \subset D$. Then we have
\begin{equation} \label{e.expexphepsilon} \mathbb E e^{\gamma h_\varepsilon}(z) = \exp
\left(\gamma^2/2 \left(-\log \varepsilon + \log C(z; D)\right)
\right) = \left( \frac{C(z;D)}{\varepsilon} \right)^{\gamma^2/2}.
\end{equation}

More general moments of the random variables $e^{\gamma
h_\varepsilon(z)}$ are also easy to calculate.  For example, we have
\begin{eqnarray}\label{e.exphepsilonmoments} \nonumber \mathbb E e^{\gamma h_\varepsilon(y)}e^{\gamma h_\varepsilon(z)} &=&
\exp \bigl( \text{Var} [\gamma (h_\varepsilon(y) +
h_\varepsilon(z))]/2 \bigr) \\ &=& \exp \left(\frac{\gamma^2}{2}\left(
G_\varepsilon(y,y) + G_\varepsilon(z,z) +
2G_\varepsilon(y,z)\right)\right).\end{eqnarray}

By Proposition \ref{p.hepsiloncovariance} we have
$G_\varepsilon(y,z) = G(y,z)$ whenever $|y-z| \geq 2 \varepsilon$
and $B_\varepsilon(y) \cup B_\varepsilon(z) \subset D$. In this case
we have

$$\mathbb E e^{\gamma h_\varepsilon(y)}e^{\gamma h_\varepsilon(z)} =
\left(\frac{C(y;D)C(z;D)}{\varepsilon^2}\right)^{\gamma^2/2}
e^{\gamma^2 G(y,z)}.$$

Write $$\overline h_\varepsilon := \gamma h_\varepsilon +
\frac{\gamma^2}{2} \log \varepsilon.$$ Then we have
$$\mathbb E e^{\overline h_\varepsilon(z)} = C(z;D)^{\gamma^2/2}
\asymp 1$$ and when $|y-z| > 2\varepsilon$ we have
$$\mathbb E e^{\overline h_\varepsilon(y)}e^{\overline h_\varepsilon(z)} =
\left(C(y;D)\,C(z;D)\right)^{\gamma^2/2} e^{\gamma^2 G(y,z)} \asymp
\left(C(y;D)\, C(z;D)\right)^{\gamma^2/2} |y-z|^{-\gamma^2} \asymp
|y-z|^{-\gamma^2}$$ where $\asymp$ indicates that equality holds up
to a constant factor when $y$ and $z$ are restricted to any compact
subset of $D$.

Now, for each fixed $\varepsilon$, write $\mu_\varepsilon :=
e^{\overline h_\varepsilon(z)} dz$ (which in essence corresponds to
the ``Wick normal ordering'' of the original measure \cite{BSimon}).
We now argue that these converge weakly to a limiting random measure
on $D$.

\proofof{Proposition \ref{p.hepsilonlimit}}  Fix $\gamma \in [0,2)$.
It is easy to see that if for each diadic square $S$ compactly
supported in $D$ the random variables $\mu_{2^{-k}}(S)$ converge to
a finite limit as $k \to \infty$, almost surely, then $\mu_{2^{-k}}$
almost surely converges weakly to a limiting measure.
We will prove
convergence of $\mu_{2^{-k}}(S)$ by showing that the expectation of
$|\mu_{2^{-k}}(S) - \mu_{2^{-k-1}}(S)|$ decays exponentially in $k$.
 (Convergence follows, e.g., by using the Borel-Cantelli lemma to show
that a.s.\ $|\mu_{2^{-k}}(S) - \mu_{2^{-k-1}}(S)|$ is greater than some exponentially
decaying function of $k$ for at most finitely many $k$.)
Without loss of generality, we may assume $S$ is the unit square
$[0,1]^2$, so that $\mu_\varepsilon(S)$ is precisely the mean value
of $e^{\overline h_\varepsilon(z)}$ on $S$.

As shown above, we have $$\mathbb E e^{\overline h_\varepsilon(z)} =
C(z;D)^{\gamma^2/2},$$ (which is bounded between positive constants)
when $z \in S$ and $\varepsilon$ is sufficiently small.

For $y=(y_1, y_2) \in (0,1)^2$ and $k \geq 1$, let $S_k^y$ be the discrete
set of $2^{2k}$ points $(a,b) \in S$ with the property that $(2^k a
-2^k y_1, 2^k b - 2^k y_2) \in \Z^2$.
 Let $A_k^y$ be the mean value of $\exp{\overline h_{2^{-k-1}}(z)}$ on
the set $S_k^y$, and  $B_k^y$  the mean value of $\exp{\overline
h_{2^{-k-2}}(z)}$ over the same set:
\begin{eqnarray}\label{Aky}
A_k^y:=2^{-2k}\sum_{z \in S_k^y}\exp{\overline h_{2^{-k-1}}(z)},\,\,\,\,
B_k^y:=2^{-2k}\sum_{z \in S_k^y}\exp{\overline h_{2^{-k-2}}(z)}.
\end{eqnarray}
Clearly, $\mu_{2^{-k-1}}(S)$ is the mean value of $A_k^y$ over $y
\in [0,1]^2$ and $\mu_{2^{-k-2}}(S)$ the mean value of $B_k^y$ over
$y \in [0,1]^2$.  Applying Jensen's inequality to the convex
function $|\cdot|$, it now suffices for us to prove that $\mathbb E
|A_k^y - B_k^y|$ tends to zero exponentially in $k$ (uniformly in
$y$). Since the balls of radius $2^{-k-1}$ centered at points in
$S_k^y$ do not overlap, and by the {\bf Markov property} of the GFF
(see, e.g., \cite{MR2322706}),
 we have that conditioned on the values of
$h_{2^{-k-1}}(z)$ for $z \in S_k^y$, the random variables
$h_{2^{-k-2}}(z)$ for $z \in S_k^y$ are independent of one another;
thanks to Propositions \ref {p.hepsiloncovariance} and \ref{p.browniancircleaverages},
each is a Gaussian of variance $\log 2$ and mean $h_{2^{-k-1}}(z)$.

Hence, given the values of $h_{2^{-k-1}}(z)$ for $z \in S_k^y$, the
value of the conditional expectation of $|A_k^y - B_k^y|^2$ is given by
\begin{eqnarray}\nonumber
\mathbb E \left(|A_k^y - B_k^y|^2|h_{2^{-k-1}}(z)\right)&=&
2^{-4k} \sum_{z \in S_k^y} \mathbb E \left(|
e^{\overline h_{2^{-k-1}}(z)} - e^{\overline h_{2^{-k-2}}(z)}|^2 |
h_{2^{-k-1}}(z) \right)\\\label{e.varsum}
 &=& 2^{-4k} C \sum_{z \in S_k^y} \left(
e^{\overline h_{2^{-k-1}}(z)} \right)^2,
\end{eqnarray}
where
$$
C=\mathbb E \left( |1 - {2}^{-\gamma^2/2}e^{\gamma
h_{2^{-k-2}}(z)}|^2 | h_{2^{-k-1}}(z) = 0 \right)=2^{\gamma^2}-1,
$$
 is a constant independent on $k$ and $z$.  The unconditional
expectation of $|A_k^y - B_k^y|^2$ is given by the expectation of
(\ref{e.varsum}).
It is tempting to argue that this expectation
tends to zero exponentially in $k$ (which would in turn imply that
$\mathbb E |A_k^y - B_k^y|$ tends to zero exponentially in $k$), but
this turns out to be true only for $0 \leq \gamma^2 < 2$ and not for
$2 \leq \gamma^2 < 4$. To see this, set $\varepsilon:=2^{-k-1}$, and note that
\begin{equation} \label{e.evar} \mathbb E \left[ (\varepsilon^2 e^{\overline
h_\varepsilon(z)})^2\right] = \varepsilon^{4+\gamma^2} \mathbb E [e^{2 \gamma
h_\varepsilon}] \asymp \varepsilon^{4+\gamma^2}
e^{- \frac{4 \gamma^2 \log \varepsilon}{2}} = \varepsilon^{4-\gamma^2}.
\end{equation} Summing over the $2^{2k}=(2\varepsilon)^{-2}$ points $z$ in
$S_k^y$ in (\ref{e.varsum}), yields for
the expectation of the latter (up to
an $\varepsilon$-independent constant factor)
$$
\mathbb E \,|A_k^y - B_k^y|^2 \asymp \varepsilon^{2 -
\gamma^2},
$$
which does not tend to zero when $\gamma^2 \geq 2$.

However, we can deal with the case $\gamma^2 \geq 2$ by breaking the
sum over $z\in S_k^y$ in (\ref{Aky}) defining $A_k^y - B_k^y$ into two parts and dealing with them
separately. The idea is that the estimate in (\ref{e.varsum}) and (\ref{e.evar}) of
the  expectation of $|A_k^y - B_k^y|^2$ is dominated (and can only be made
large) by rare occurrences at points $z$ where $h_\varepsilon(z)$ is much
larger than typical. Their contribution to the expectations of
$A_k^y$ and $B_k^y$, hence to $\mathbb E |A_k^y-B_k^y|$, is exponentially small in $k$.

To make this precise, fix some $\alpha$ with $\gamma < \alpha <
2\gamma$ and let $\tilde S_k^y$ denote set of points $z \in S_k^y$
with the property that $h_\varepsilon(z) > -\alpha \log[\varepsilon/ C(z;D)]$, where  $\varepsilon = 2^{-k-1}$. Let $\tilde A_k^y$
denote the mean value of $1_{\tilde S_k^y} \exp{\overline
h_{2^{-k-1}}(z)}$ over $S_k^y$ and $\tilde B_k^y$ the mean value of $1_{\tilde
S_k^y} \exp{\overline h_{2^{-k-2}}(z)}$  over $S_k^y$, so that
\begin{eqnarray}\label{tildeAky}
A_k^y=\tilde A_k^y +2^{-2k}\sum_{z \in S_k^y\setminus \tilde S_k^y}\exp{\overline h_{2^{-k-1}}(z)},\,\,\,\,
B_k^y=\tilde B_k^y +2^{-2k}\sum_{z \in S_k^y\setminus \tilde S_k^y}\exp{\overline h_{2^{-k-2}}(z)}.
\end{eqnarray}
We claim that $\mathbb E \tilde A_k^y$ tends to zero exponentially
 in $k$. To see this, observe that for fixed $z \in S$, the random
variable $h_\varepsilon(z)$ is a centered Gaussian with variance
$\sigma^2 = - \log [\varepsilon/ C(z;D)];$
 and the expectation
of $e^{\overline h_\varepsilon(z)}$---which we know to be constant
for all $\varepsilon$ small enough so that $z$ is distance at least
$\varepsilon$ from the boundary of $D$---takes the form
\begin{eqnarray}\label{E0}
\mathbb E e^{\overline h_\varepsilon(z)}=\frac{\varepsilon^{\gamma^2/2}}{(2\pi)^{1/2}}\int_{\mathbb R} e^{-\frac{\eta^2}{2\sigma^2}} e^{\gamma \eta}
d\eta=C(z;D)^{\gamma^2/2}.
\end{eqnarray}
We can therefore simply write for points $z\in\tilde S_k^y$:
\begin{eqnarray}\label{E1}\mathbb E \,1_{\tilde S_k^y}e^{\overline h_\varepsilon(z)}=\frac{\int_{\mathbb R} 1_{\eta > \alpha\sigma^2} e^{-\frac{\eta^2}{2\sigma^2}} e^{\gamma\eta} d\eta}
{ \int_{\mathbb R} e^{-\frac{\eta^2}{2\sigma^2}} e^{\gamma \eta} d\eta} \times \mathbb E\, e^{\overline h_\varepsilon(z)}.
\end{eqnarray}
The ratio of integrals in (\ref{E1}) is the probability that a normal random
variable $\eta$ of mean $\gamma \sigma^2$ and variance $\sigma^2$
satisfies $\eta > \alpha \sigma^2$, with $\alpha >\gamma$, and this clearly tends to zero
exponentially in $\sigma^2$ with rate $\frac{1}{2} (\alpha-\gamma)^2$, from which the claim easily follows.

Note that (\ref{E0}) remains unchanged if we replace $\varepsilon = 2^{-k-1}$ with $2^{-k-2}$, which implies that
$\mathbb E
\tilde B_k^y = \mathbb E \tilde A_k^y$, and in particular $\mathbb E
\tilde B_k^y$ also tends to zero exponentially in $k$. Since $\tilde
B_k^y$ and $\tilde A_k^y$ are non-negative, applying the triangle
inequality shows that $\mathbb E |\tilde B_k^y - \tilde
A_k^y|$ tends to zero exponentially in $k$.

For the next step, we wish to bound $\mathbb E |(B_k^y - \tilde
B_k^y) - (A_k^y - \tilde A_k^y)|^2$, which requires us to consider the
 set of points $z\in S_k^y \setminus \tilde S_k^y$ in (\ref{tildeAky}). Applying formula (\ref{e.varsum}), we are
led to estimate the expectation
\begin{equation} \label{e.evar2} \varepsilon^{4} \mathbb E \left[ \,1_{S_k^y\setminus \tilde S_k^y}
\left(
e^{\overline h_\varepsilon(z)}\right)^2 \right] = \varepsilon^{4+\gamma^2}
\mathbb E \left[ \,1_{S_k^y\setminus \tilde S_k^y} e^{2 \gamma h_\varepsilon(z)}\right].
\end{equation}
Using the explicit relation (compare to (\ref{E1}))
\begin{eqnarray}\label{E2}\mathbb E \left[ \,1_{S_k^y\setminus \tilde S_k^y} e^{2 \gamma h_\varepsilon(z)}\right]
=\frac{\int_{\mathbb R} 1_{\eta < \alpha\sigma^2} e^{-\frac{\eta^2}{2\sigma^2}} e^{2\gamma\eta} d\eta}
{ \int_{\mathbb R} e^{-\frac{\eta^2}{2\sigma^2}} e^{2\gamma \eta} d\eta} \times \mathbb E\, e^{2\gamma h_\varepsilon(z)},
\end{eqnarray}
we find that  (\ref{e.evar2}) differs from
(\ref{e.evar}) by a factor that represents the probability that a
Gaussian random variable with variance $- \log \varepsilon$ (plus a constant term) and mean
$-2 \gamma \log \varepsilon$ is less than $- \alpha \log
\varepsilon$. Since $\alpha < 2 \gamma$, this probability decays
exponentially in $-\log \varepsilon$ at rate $(2\gamma -
\alpha)^2/2$. Thus (\ref{e.evar2}) becomes, up to a constant factor
(universal in $\varepsilon$ and $z \in S$),
$$\varepsilon^{4-\gamma^2} \varepsilon^{(2 \gamma - \alpha)^2/2}.$$
Summing over the $2^{2k}=(2\varepsilon)^{-2}$ points $z\in S_k^y$, we obtain the estimate
$$\mathbb E |(B_k^y - \tilde
B_k^y) - (A_k^y - \tilde A_k^y)|^2 \asymp \varepsilon^{2 - \gamma^2 + (2\gamma - \alpha)^2/2}.
$$
To conclude, we only need to make sure we chose $\alpha \in (\gamma, 2
\gamma)$ small enough so that the sum in the exponent is positive,
and this is clearly possible. In fact, taking $\alpha$ close to
$\gamma$, the exponent becomes close to $2 - \gamma^2 + \gamma^2/2 =
2 - \frac{\gamma^2}{2}$, which is positive when $\gamma < 2$. \qed

\subsection{Rooted random measures}
\label{s.rootedmetrics}

Before proving Proposition \ref{p.hnlimit}, we introduce a notion of
rooted random measure and use it to prove a uniform integrability
result for the random variables $\mu_\varepsilon(S)$ discussed
above.

Write $\Theta_\varepsilon := Z_\varepsilon^{-1} e^{\gamma
h_\varepsilon(z)} d z dh$, where $Z_\varepsilon$ is a constant
chosen to make $\Theta_\varepsilon$ a probability measure.
 Note that $dzdh$ is a probability measure on a standard Borel space and
that $Z_\varepsilon^{-1} e^{\gamma h_\varepsilon(z)}$ is a measurable function
on that space with expectation one.  There is therefore no difficulty in defining
the $\Theta_\varepsilon$ as the measure whose Radon-Nikodym derivative
with respect to $dzdh$ is  $Z_\varepsilon^{-1} e^{\gamma h_\varepsilon(z)}$.
Integrating, we see that the $\Theta_\varepsilon$ marginal distribution of $z$ is
given by $f(z)dz$ where $f(z) = Z_\varepsilon^{-1} \mathbb E_h e^{\gamma
h_\varepsilon(z)}$. Thus $f(z)$ is proportional to $[C(z;D)]^{\gamma^2/2}$ by \eqref{e.expexphepsilon}
(provided $B_\varepsilon(z) \subset D$).  Similarly, the $\Theta_\varepsilon$ marginal law
of $h$ is $Z_\varepsilon^{-1}\left(\int_D e^{\gamma h_\varepsilon(z)} dz\right)dh$.
Given $z$, a regular conditional probability distribution for $h$ is given by
$\bigl( \mathbb E_h e^{\gamma h_\varepsilon(z)}\bigr)^{-1} e^{\gamma h_\varepsilon(z)}dh$.

  In other words,
sampling from $\Theta_\varepsilon$ may be described as a two step
procedure. First sample $z$ from its marginal distribution.  Then
sample $h$ from the distribution of the Gaussian free field {\em
weighted} by $e^{\gamma h_\varepsilon(z)}$.  Since $dh$ is a Gaussian measure,
we find that given $z$, $h$ has the law
of the original GFF {\em plus} $\gamma \xi_\varepsilon^z$ where
$\xi_\varepsilon^z$ satisfies a Dirichlet problem: $-\Delta
\xi_\varepsilon^z$ is the multiple of the uniform measure on
$\partial B_\varepsilon(z)$ with total mass $2 \pi$ (because $h$ is
$\sqrt{2\pi}$ times the standard GFF; if $h$ were the standard GFF
the total mass would be $1$).  As noted in Section
\ref{ss.GFFnormsection}, this $\xi_\varepsilon^z$ has been computed
explicitly:
\begin{equation}
\label{xi}
\xi_\varepsilon^z(y) = - \log \max\{|z-y|,\varepsilon \} - \tilde G_z(y),
\end{equation}
where $\tilde G_z$ is the harmonic interpolation to $D$ of the first
term on $\partial D$, as long as $B_\varepsilon(z) \subset D$.

Let $\Theta$ be the limit of the measures $\Theta_\varepsilon$ as
$\varepsilon \to 0$: that is, $\Theta$ is the measure on pairs
$(z,h)$ for which the marginal distribution of $z$ is proportional
to $[C(z;D)]^{\gamma^2/2}dz$ and, given $z$, the $\Theta$
conditional law of $h$ is that of the original standard  GFF plus the
deterministic function $\gamma \xi_0^z$ (viewed as a distribution).
For any $\tilde D$ compactly supported on $D$, we will also write $\Theta^{\tilde D}$ for the
measure $\Theta$ conditioned on the event $z \in \tilde D$.  We similarly define $\Theta^{\tilde D}_\varepsilon$
to be $\Theta_\varepsilon$ conditioned on $z \in \tilde D$ (where $\varepsilon$ is less than the distance from $\tilde D$ to $\partial D$).
By the above construction and Proposition \ref{p.browniancircleaverages},
we have that conditioned on $z$, the $\Theta$ law of $h_{e^{-t}}$ is essentially that of a Brownian
motion plus a drift term of $\gamma t$.  This in particular implies the following:

\begin{proposition} \label{p.thickpoint}
With $\Theta$ probability one, $z$ is a {\bf $\gamma$-thick point}
of $h$ by the definition in \cite{huperes}.  That is, $$\liminf_{\varepsilon \to 0} {
h_\varepsilon(z)}/{\log\varepsilon^{-1}} \geq \gamma.$$  In fact,
the limit exists and equality holds almost surely.
\end{proposition}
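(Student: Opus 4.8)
The plan is to read the statement directly off the explicit description of $\Theta$ (conditioned on $z$) recorded just above, combined with the strong law of large numbers for Brownian motion. First I would recall that, under $\Theta$, once $z$ is fixed the field $h$ has the law of the zero-boundary GFF $\overline h$ on $D$ plus the deterministic distribution $\gamma \xi_0^z$, where $\xi_0^z(y) = -\log|z-y| - \tilde G_z(y)$ (which is just $G(z,\cdot)$). Taking circle averages at radius $\varepsilon = e^{-t}$ and using Proposition \ref{p.hepsiloncovariance} — the mean value of $\xi_0^z$ on $\partial B_\varepsilon(z)$ equals $G_{0,\varepsilon}(z,z) = -\log\varepsilon + \log C(z;D)$ — this gives, for all $t$ large enough that $B_{e^{-t}}(z) \subset D$,
\[
h_{e^{-t}}(z) = \overline h_{e^{-t}}(z) + \gamma t + \gamma \log C(z;D).
\]

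Next I would invoke Proposition \ref{p.browniancircleaverages}: for fixed $z$, the process $t \mapsto \overline h_{e^{-t}}(z)$ is, after an initial time- and space-shift, a standard Brownian motion. In particular $\overline h_{e^{-t}}(z)/t \to 0$ almost surely as $t \to \infty$ by the strong law of large numbers for Brownian motion, which controls the continuous-parameter limit, not merely a discrete subsequence. Combining with the display above yields $h_{e^{-t}}(z)/t \to \gamma$ almost surely; since $\log \varepsilon^{-1} = t$ for $\varepsilon = e^{-t}$, and $h_\varepsilon(z)$ is jointly continuous in $(\varepsilon,z)$ by Proposition \ref{p.holdercontinuous}, this gives $\lim_{\varepsilon \to 0} h_\varepsilon(z)/\log \varepsilon^{-1} = \gamma$, which is stronger than the asserted $\liminf \geq \gamma$ and matches the definition of a $\gamma$-thick point from \cite{huperes}. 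Finally I would integrate this $\Theta(\cdot \mid z)$-almost-sure statement over the marginal law of $z$ (Fubini, or the tower property for conditional expectations) to conclude that the event has full $\Theta$-probability.

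There is no genuine obstacle here; the only care needed is bookkeeping: tracking the additive constant $\gamma \log C(z;D)$ and the $\sqrt{2\pi}$ normalization of $h$ through Propositions \ref{p.hepsiloncovariance} and \ref{p.browniancircleaverages} so that the drift coefficient comes out exactly $\gamma$, and noting (via continuity of $h_\varepsilon(z)$) that the Brownian strong law governs the full limit $\varepsilon \downarrow 0$ rather than a sequence. If anything counts as the hard part, it is merely upgrading the informal phrase ``essentially that of a Brownian motion plus a drift term of $\gamma t$'' to the precise identity in the first display above; everything after that is immediate.
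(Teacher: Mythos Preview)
Your argument is correct and is exactly the approach the paper indicates in the sentence immediately preceding the proposition: under $\Theta$ and conditioned on $z$, the process $t\mapsto h_{e^{-t}}(z)$ is a Brownian motion with drift $\gamma t$ (plus a bounded constant), so the strong law for Brownian motion gives the limit $\gamma$. You have simply made explicit the bookkeeping (the circle average of $\xi_0^z$ via Proposition~\ref{p.hepsiloncovariance} and the identification of the constant $\gamma\log C(z;D)$) that the paper leaves implicit.
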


Since the $\Theta$ marginal law of $h$ is absolutely continuous with
respect to the law of $h$ (with Radon-Nikodym derivative
$\mu_h(D)$), this implies that $\mu_h$ is almost surely supported on
$\gamma$-thick points.  It was shown by Hu, Miller, and Peres that
the set of $\gamma$-thick points has Hausdorff dimension $2 -
\frac{\gamma^2}{2}$ almost surely \cite{huperes}.

\proofof{Proposition \ref{p.hnlimit}} The almost sure weak
convergence of the $\mu^n$ to a limit $\tilde \mu$ is immediate from
the martingale convergence theorem. Recall the expression
(\ref{e.hn}) $$ \mu^n = \exp \left( \gamma h^n(z) -
\frac{\gamma^2}{2}\Var h^n(z) + \frac{\gamma^2}{2}\log C(z; D)
\right)dz,$$ and observe that for each $z$, the exponential term
$$\exp \left( \gamma h^n(z) - \frac{\gamma^2}{2}\Var h^n(z) +
\frac{\gamma^2}{2}\log C(z; D) \right)$$ is a non-negative
martingale with respect to the filtration of $h^n$. (This is a
consequence of (\ref{e.expnormal}).) Fubini's theorem implies that
$\mu^n(A)$ is a martingale for any Borel measurable set $A \subset
D$, and the martingale convergence theorem implies that the limit
$\tilde \mu(A) := \lim \mu^n(A)$ exists almost surely. In particular, this holds
whenever $A$ is a diadic square contained in $D$ and from this
easily follows the desired weak convergence.

We still need to show that $\mu = \tilde \mu$ almost surely, where
$\mu$ is as constructed in Proposition \ref{p.hepsilonlimit}.  It is enough
to show that $\mu(S) = \tilde \mu(S)$ almost surely for each diadic square $S$ compactly supported on $D$,
and since both $\mu$ and $\tilde \mu$ are functions of $h$, this is equivalent to
showing that $\mathbb E[\mu(S)|h] = \mathbb E[\tilde \mu(S)|h]$.  This in turn follows
if we can show $\mathbb E[\mu(S)|h^n] = \mathbb E[\tilde \mu(S)|h^n]$ for all $n$; the
latter expectation is just $\mu^n(S)$ by definition, so it remains only to show \begin{equation}\label{Sexp}\mathbb E[\mu(S)|h^n]  = \mu^n(S).
\end{equation}

To this end, let
$h^n_\varepsilon$ denote the mean value of $h^n$ on $\partial
B_\varepsilon(z)$.  For each particular choice of $z$, and
$\varepsilon$ small enough so that $B_\varepsilon(z) \subset D$, and
for each $n$, we recall from (\ref{bertrandequation}) that $h_\varepsilon(z)$ is a Gaussian random variable
with variance  $\log C(z;D) - \log \varepsilon$ and that $h^n_\varepsilon(z)$ is the conditional expectation of $h_\varepsilon(z)$
given the projection of $h$ to the span of $f_1,f_2, \ldots, f_n$.  Hence
$$\mathbb E[ \varepsilon^{\gamma^2/2}e^{\gamma h_\varepsilon(z)} | h^n] =
\exp \left( \gamma h^n_\varepsilon(z) - \frac{\gamma^2}{2}\Var
h^n_\varepsilon(z) + \frac{\gamma^2}{2}\log C(z; D) \right).$$
Taking the limit as $\varepsilon \to 0$ and using the continuity of
$h^n(z)$ and $\Var h^n(z)$ and the expression (\ref{e.hn}), we have
\begin{equation}\label{e.limmun} \lim_{\varepsilon \to 0} \mathbb E[\mu_\varepsilon(S)| h^n] =
\mu^n(S)\end{equation} for each diadic square $S$.
Using (\ref{e.limmun}) we will have established (\ref{Sexp}) once we show that
\begin{equation} \label{e.limequality}
\mathbb E[\mu(S)|h^n]  :=\mathbb E[\lim_{\varepsilon \to 0} \mu_\varepsilon(S) | h^n]
=\lim_{\varepsilon \to 0} \mathbb E[\mu_\varepsilon(S)| h^n],
\end{equation} provided that $0 \leq \gamma < 2$.

We first argue this in the case $n=0$ and $h^0=0$, that is
\begin{equation} \label{e.limequalityzero}
\mathbb E[\mu(S)]  :=\mathbb E[\lim_{\varepsilon \to 0} \mu_\varepsilon(S)]
=\lim_{\varepsilon \to 0} \mathbb E[\mu_\varepsilon(S)].
\end{equation}
Since Proposition
\ref{p.hepsilonlimit} implies the existence of the limit of $\mu_\varepsilon(S)$, it is
enough to show that the random variables $M_\varepsilon =
\mu_\varepsilon(S)$ are uniformly integrable as $\varepsilon \to 0$.
Let $M = \mathbb E M_\varepsilon$ for $\varepsilon$ small enough so
that $B_\varepsilon(S) \subset D$. (By (\ref{e.expexphepsilon}) this
expectation is the same for all sufficiently small $\varepsilon$.)
The uniform integrability is equivalent to the statement that the
probability measures $\eta_\varepsilon := M^{-1} M_\varepsilon
dM_\varepsilon$ are tight, i.e., for all $\delta$ there exists a
constant $C>0$ such that $\eta_\varepsilon ([C,\infty)) < \delta$
for all $\varepsilon$.  (Here $M^{-1} M_\varepsilon dM_\varepsilon$
denotes the probability measure on $\mathbb R$ whose Radon-Nikodym
derivative with respect to the law of $M_\varepsilon$ is given by
$M^{-1} M_\varepsilon$.)  Since $M_\varepsilon$ is a function of
$h$, this is equivalent to the statement that with respect to the
measure $M^{-1} M_\varepsilon(h) dh$ the random variables
$M_\varepsilon(h)$ are tight.  Recalling
that $M_\varepsilon$ is (up to a constant factor) the Radon-Nikodym derivative of the $h$ marginal of
$\Theta_\varepsilon^S$ with respect to $dh$, this in turn can be rewritten as the
statement that for each $\delta$ we can find a $C$ such that
\begin{equation} \label{e.thetatight} \Theta_\varepsilon^S \{M_\varepsilon(h) > C \} < \delta \end{equation} for all $\varepsilon$.

Throughout the remainder of the proof of (\ref{e.limequalityzero}), all probabilities and expectations will be taken with respect
to $\Theta_\varepsilon^S$.  Let $\varepsilon_0 = \sup \{ \varepsilon' : B_{\varepsilon'}(S)
\subset D \}$.  It suffices to prove the above statement, that for each $\delta$ we can find a $C$ such that (\ref{e.thetatight}) holds, for small $S$ --- precisely, for avoiding boundary effects, we may
restrict attention to $S$ such that $\varepsilon_0$ is larger than the diameter of $S$, in which case $S \subset B_{\varepsilon_0}(z)$
for any $z \in S$.

In order to obtain (\ref{e.thetatight}), we will describe a procedure for sampling from $\Theta_\varepsilon^S$ in multiple stages.
We will then show that (\ref{e.thetatight}) holds when $M_\varepsilon(h)$ is replaced by its conditional expectation given the observations from the first two stages, and the statement we require will follow easily from this.

Precisely, we may sample the pair $(z,h)$ from $\Theta_\varepsilon^S$ in the following steps. In
the first step, we sample $z$ from its marginal law (which is
independent of $\varepsilon$ for $\varepsilon$ sufficiently small).
Write $\tilde h = h - \gamma \xi_\varepsilon^z$.  Given $z$, the $\Theta_\varepsilon^S$ law
of $\tilde h$ is that of a GFF on $D$.  In the second step, we
sample $\mathcal B_t = \tilde h_{e^{-t} \varepsilon_0}(z) - \tilde
h_{\varepsilon_0}(z)$ for all $t \in [0, -\log
(\varepsilon/\varepsilon_0)]$. By Proposition
\ref{p.browniancircleaverages}, $\mathcal B_t$ is a Brownian motion
on this interval independent of $z$.
In the third and final step, we choose $h$ conditioned on the results of the first two steps.

The $\Theta_\varepsilon^S$ conditional expectation of
$\tilde h$ given the whole process $\mathcal B_t$ (which we have
defined only for $t \in [0, -\log (\varepsilon/\varepsilon_0)]$) and
$z$ is given by the function (viewed as a distribution)

$$\tilde h^{\centerdot}(w):= \mathbb E [\tilde h(w) |z, \mathcal B_t
] = \begin{cases}  \mathcal B_{u(w)} & (|z-w| < \varepsilon_0) \\
0 & (|z - w| \geq \varepsilon_0) \end{cases},$$
where
$$
u(w):=
-\log \frac{|z-w| \vee \varepsilon}{\varepsilon_0}.
$$
(We will discuss similar conditional expectations in detail in Section \ref{expmart}.)
{Note that once $z$ is fixed, for each $w$ the mean value of $\tilde
h^{\centerdot}(\cdot)$ on $\partial B_\varepsilon(w)$ (which we denote by
$\tilde h^{\centerdot}_\varepsilon(w)$) is a weighted average of $\mathcal B_t$
over values of $t$ between $u_1(w):=-\log(\varepsilon_1(w)/\varepsilon_0)$ and
$u_2(w):=-\log(\varepsilon_2(w)/\varepsilon_0)$, where
$$
\varepsilon_1(w):=\varepsilon_0 \wedge (|w-z|+\varepsilon),\,\,\, \varepsilon_2(w):=(|w-z|-\varepsilon)\vee \varepsilon ,
$$
with, for $|z-w| \leq \varepsilon_0$, $\varepsilon_1(w)\geq \varepsilon_2(w)$, hence
$u_1(w)\leq u_2(w)$.
From this it is not hard to see
that given $z$ the variance of $\tilde h_\varepsilon^{\centerdot}(w)$ is between
the two values $\{u_1(w),u_2(w)\}$  of $t$.  We claim that each of these bounds differs from the intermediate value
$u(w)$ above, with  $u_1(w)\leq u(w)\leq  u_2(w)$, by at most an additive constant $\log 2$. This is equivalent to
the statement that $\varepsilon_1(w)$ and $\varepsilon_2(w)$
differ from $|z-w| \vee \varepsilon$ by a
multiplicative or inverse factor of at most two, which is easily checked under the further mild assumption that $2\varepsilon \leq
\varepsilon_0$.  Thus the variance of
$\tilde h^{\centerdot}_\varepsilon(w)$ is within $\log 2$ of the value
$$u(w)=-\log \frac{|z-w|}{\varepsilon_0} \wedge -\log \frac{\varepsilon}{\varepsilon_0}.$$}

Since $\mathbb E[\tilde h_\varepsilon(w) | \tilde h^{\centerdot}_\varepsilon(w)] = \tilde h_\varepsilon^{\centerdot}(w)$, we have
\begin{equation}\label{e.varvar} \Var(\tilde h_\varepsilon(w)) =  \mathbb E \Var\bigl(\tilde h_\varepsilon(w)|\tilde h^{\centerdot}_\varepsilon(w)\bigr) + \Var(\tilde h_\varepsilon^{\centerdot}(w)).\end{equation}
Since $\tilde h_\varepsilon(w)$ and $\tilde h^{\centerdot}_\varepsilon(w)$ (both linear functionals of $h$) are jointly Gaussian, the quantity $\Var\bigl(\tilde h_\varepsilon(w)|\tilde h^{\centerdot}_\varepsilon(w)\bigr)$ is in fact independent of $\tilde h^{\centerdot}_\varepsilon(w)$.
Since (as observed above) $|\Var(\tilde h^{\centerdot}_\varepsilon(w)) - u(w)| < \log 2$, we conclude that
$$\bigl|\Var(\tilde h_\varepsilon(w)|\tilde
h^{\centerdot}_\varepsilon(w))- \Var(\tilde h_\varepsilon(w))+ u(w) \bigr| < \log 2,$$
almost surely.

Thus, with respect to $\Theta_\varepsilon^S$, we have
$$\mathbb E [\varepsilon^{\gamma^2/2} e^{\gamma h_\varepsilon(w)}|z, \mathcal B_t]
\asymp  \exp \left(\gamma \tilde h^{\centerdot}_\varepsilon(w) + \gamma^2
\xi^z_\varepsilon(w) - \gamma^2 u(w)/2 \right) \asymp \exp
\left(\gamma \tilde h^{\centerdot}_\varepsilon(w) + \gamma^2 u(w)/2 \right),$$
where we recall that, thanks to (\ref{xi}), $\xi^z_\varepsilon(w)=u(w)-\log \varepsilon_0 -\tilde G_z(w)$,
and where $\asymp$ indicates equality up to a multiplicative factor
bounded between positive constants uniformly in $\varepsilon$ and
$z$.

Now, given any positive constants $a$ and $b$, there is a positive
probability that a Brownian motion $\mathcal B_t$ run for an
infinite amount of time will satisfy $\gamma \mathcal B_t < a + bt$
for all $t \geq 0$. In fact, for each fixed $b$, this probability
can be made as close to one as possible by taking $a$ sufficiently
large. Since $0 \leq \gamma < 2$ we can choose a value of $b$ with
$0 < b < 2-\gamma^2/2$. Then note that conditioned on the event $\mathcal A: \gamma \mathcal B_t < a
+ bt$ for all $t$, and since the mean value
$\tilde h^{\centerdot}_\varepsilon(w)$ is a weighted average of $\mathcal B_t$
over values of $t\in [u_1(w),u_2(w)]$, we have
$\gamma \tilde h^{\centerdot}_\varepsilon(w)<a+b\, u_2(w)\leq a+b\log 2+b\, u(w)$.
 We therefore have, for some constant $C_0$
\begin{eqnarray}\nonumber
\mathbb E [\varepsilon^{\gamma^2/2} e^{\gamma h_\varepsilon(w)}| z, \mathcal B_t,\mathcal A]
&\leq& C_0 \,e^a \exp\left[(b+ \gamma^2/2) u(w) \right],\\
\nonumber
&\leq & C_0\, e^a\, |z-w|^{- b - \gamma^2/2}
\end{eqnarray}
for $|z-w| <
\varepsilon_0$. Since $S \subset B_{\varepsilon_0}(z)$ for $z \in S$, this in turn implies that
\begin{eqnarray}\nonumber\mathbb
E[\mu_\varepsilon(S)| z, \mathcal B_t,\mathcal A] &\leq&
\int_{B_{\varepsilon_0}(z)} C_0\, e^a\, |z-w|^{-b - \gamma^2/2} dw,
\end{eqnarray}
and since $b+\gamma^2/2 < 2$,
the right hand side
is at most a finite constant $C_1 = C_1(a)$ that is independent of
$\varepsilon$. Now, given $b$ and a constant $\delta>0$ we can
choose $a$ large enough so that the probability {of the event $\mathcal A$ (that $\gamma
\mathcal B_t < a + bt$ for all $t$)} is at least $1 - \delta/2$. Then
we take $C = \frac{C_1(a)}{\delta/2}$. If there were probability at
least $\delta$ that $\mu_\varepsilon(S)
> C$ then there would have to be probability at least $\delta/2$
that event $\mathcal A$
{\em and}
$\mu_\varepsilon(S)
> C$ simultaneously happen, which would contradict our bound on the conditional
expectation of $\mu_\varepsilon(S)$ given $\mathcal A$.
 This implies that the probability measures
$\eta_\varepsilon$ are tight, which in turn completes the proof of
(\ref{e.limequalityzero}), which is the $n=0$ and $h^0=0$ case of (\ref{e.limequality}).

{As a tool, we used heavily within
the proof of (\ref{e.limequalityzero}) the probability measure $\Theta^S_\varepsilon$ on $(z,h)$
pairs.  Extending (\ref{e.limequality}) to the case $n \not = 0$ does not require this tool;
in the discussion below we will use only the original $dzdh$ measure.}
Note that since the random variables $\mu_\varepsilon(S)$ converge $dzdh$ almost surely to a limit
(with expectation $\lim_{\varepsilon \to 0} \mathbb E
\mu_\varepsilon(S)$), it must be the case that conditioned on $h^n$
(for almost all values of $h^n$), we still have that $\mu_\varepsilon(S)$
converges $dzdh$ almost surely to a limit.  The fact that \begin{equation}\label{fatou} \mathbb
E[\lim_{\varepsilon \to 0} \mu_\varepsilon(S)| h^n] \leq
\lim_{\varepsilon \to 0} \mathbb E[\mu_\varepsilon(S)| h^n]\end{equation} for
almost all $h^n$ is immediate from Fatou's lemma.  From the unconditional
result, we know that equality holds when we integrate over possible values of $h^n$ ---
hence equality must hold in (\ref{fatou}) for almost all $h^n$.
The extension of (\ref{e.limequality}) to non-zero $h^0$ is trivial
for functions that are piecewise constant on diadic squares, and the
more general case follows easily by approximation by piecewise
constant functions.

Proposition \ref{p.hnlimit} is an immediate consequence of
(\ref{e.limmun}) and (\ref{e.limequality}).\qed

\section{KPZ proofs}
\subsection{Circle average KPZ}\label{expmart}
{For fixed $z \in D$, choose some radius $\varepsilon_0$ such that
$B_{\varepsilon_0}(z) \subset D$. As a first step, we estimate the expectation of
the quantum measure $\mu_h({B_\varepsilon(z)})$,
\textit{given} the difference of
circle averages $h_\varepsilon(z)-h_{\varepsilon_0}(z)$ for $\varepsilon \leq \varepsilon_0$.
Recalling the notation of Proposition
\ref{p.hnlimit}, we take $h^0=0$, $n=1$, and
\begin{equation}\label{f1}
f_1 = \big(\xi^z_\varepsilon-\xi^z_{\varepsilon_0}\big)/
||\xi^z_\varepsilon-\xi^z_{\varepsilon_0}||_\nabla .
\end{equation}
Recall from (\ref{xixi}) that the square Dirichlet norm of function $\xi^z_\varepsilon$ (\ref{defxi}) is
such that
$||\xi^z_\varepsilon||_\nabla^2=(\xi^z_\varepsilon, \xi^z_\varepsilon)_\nabla=
\xi^z_\varepsilon(z)$, and from Proposition \ref{p.hepsiloncovariance} that
$(\xi^z_\varepsilon,\xi^z_{\varepsilon_0})_\nabla=\xi^z_{\varepsilon_0}(z)$. One thus finds
$||\xi^z_\varepsilon-\xi^z_{\varepsilon_0}||^2_\nabla=-\log(\varepsilon/\varepsilon_0)$ and
\begin{eqnarray}\label{xi-xi0}
\big(\xi^z_\varepsilon-\xi^z_{\varepsilon_0}\big)(y)=\begin{cases}
-\log(\varepsilon/\varepsilon_0),\,\,\,\,\,\,\,\,\,\,\,\,\,\,\,\,\,0\leq |y-z| \leq \varepsilon\\
-\log(|y-z|/\varepsilon_0),\,\,\, \varepsilon \leq |y-z| \leq \varepsilon_0\\
\,\,\,\,\,\,0,\,\,\,\,\,\,\,\,\,\,\,\,\,\,\,\,\,\,\,\,\,\,\,\,\,\,\,\,\,\,\,\,\,\,
\varepsilon_0\leq |y-z|.
\end{cases}
\end{eqnarray}
The projection $h^1$ of $h$ onto the span of $f_1$ and its variance are then
\begin{eqnarray}\label{projectionbis}
h^1(y)&=&\big[h_\varepsilon(z)-h_{\varepsilon_0}(z)\big]\frac{\big(\xi^z_\varepsilon-\xi^z_{\varepsilon_0}\big)(y)}
{-\log(\varepsilon/\varepsilon_0)},\\ \label{varprojectionbis}
\Var\, h^1(y)&=& \frac{\big(\xi^z_\varepsilon-\xi^z_{\varepsilon_0}\big)^2(y)}{-\log(\varepsilon/\varepsilon_0)}
,
\end{eqnarray}
where we recall that $\Var\, [h_\varepsilon(z)-h_{\varepsilon_0}(z)]=-\log(\varepsilon/\varepsilon_0)$.}

{Recalling the notation of Proposition \ref{p.hepsilonlimit}, the conditional
expectation formula (\ref{condexpect}) for $\mu$  in Proposition \ref{p.hnlimit} gives
\begin{equation}\label{muBepsbis}\mathbb E_h \left[\int_{B_\varepsilon(z)} e^{\gamma h} dz |
h_\varepsilon(z)-h_{\varepsilon_0}(z) \right]=\mathbb E_h \left[\mu_h({B_\varepsilon(z)}) |h_\varepsilon(z)-h_{\varepsilon_0}(z)
\right]=\mu^1\left(B_\varepsilon(z)\right),
\end{equation}
where $\mu^1$ is the projected measure (\ref{e.hn})
\begin{equation}\label{e.hn1}
\mu^1(dy)=\exp\left(\gamma h^1(y)-\frac{\gamma^2}{2}\Var\, h^1(y)+\frac{\gamma^2}{2}\log C(y;D)\right)dy.
\end{equation}
Note that by (\ref{xi-xi0}), $h^1(y)$ does not depend on $y$ for $y\in B_\varepsilon(z)$
\begin{eqnarray}
\nonumber
h^1(y)&=&h_\varepsilon(z)-h_{\varepsilon_0}(z),\,\,\,y \in B_\varepsilon(z),
\\
\nonumber
\Var\, h^1(y)&=& -\log(\varepsilon/\varepsilon_0).
\end{eqnarray}
We therefore have
\begin{eqnarray}\label{mu1}
\mu^1(dy)&=&\mu^0(dy)
\left(\frac{\varepsilon}{\varepsilon_0}\right)^{\gamma^2/2}
\exp\big[h_\varepsilon(z)-h_{\varepsilon_0}(z)\big],\,\,\,y\in B_\varepsilon(z), \\ \label{mu0}
\mu^0(dy)&:=&\big[C(y;D)\big]^{\gamma^2/2}dy.
\end{eqnarray}
Define the ($\gamma$-dependent) average $C_\varepsilon(z;D)$ of the conformal radius over the ball
$B_\varepsilon(z)$ via
the average moment
\begin{equation}\label{avC}
\big[C_\varepsilon(z;D)\big]^{\gamma^2/2}:=\frac{\mu^0\big(B_\varepsilon(z)\big)}{\mu_0\big(B_\varepsilon(z)\big)}=
\frac{1}{\pi \varepsilon^2}\int_{B_\varepsilon(z)}\big[C(y;D)\big]^{\gamma^2/2}dy,
\end{equation}
so that for $\varepsilon \to 0$
$$
\lim_{\varepsilon\to 0} C_{\varepsilon}(z;D)=C(z;D).
$$
We then have the simple expression
\begin{equation}\label{mu1Bepsilon}
\mu^1\big(B_\varepsilon(z)\big)=\pi \varepsilon^{\gamma Q}\left(\frac{C_\varepsilon(z;D)}{\varepsilon_0}
\right)^{\gamma^2/2}\exp\big[h_\varepsilon(z)-h_{\varepsilon_0}(z)\big],
\end{equation}
where, as above, $Q={2}/{\gamma}+{\gamma}/{2}$.}

{As an alternative, one may wish to estimate the expectation of the quantum measure $\mu_h({B_\varepsilon(z)})$,
\textit{given only} the
circle average $h_\varepsilon(z)$. In the notation of Proposition \ref{p.hnlimit}, we
take in that case $h^0=0$, $n=1$, and $\tilde f_1 = \xi^z_\varepsilon/|| \xi^z_\varepsilon||_\nabla$,
with the square Dirichlet norm $|| \xi^z_\varepsilon||_\nabla^2=( \xi^z_\varepsilon, \xi^z_\varepsilon)_\nabla=
\xi^z_\varepsilon(z)$.
The projection ${\tilde h}^1$ of $h$ onto the span of $f_1$ and its variance are then
\begin{eqnarray}\label{projection}
{\tilde h}^1(y)&=&h_\varepsilon(z)\frac{\xi^z_\varepsilon(y)}{\xi^z_\varepsilon(z)},\\ \label{varprojection}
\Var\, {\tilde h}^1(y)&=&\Var\, h_\varepsilon(z) \left(\frac{\xi^z_\varepsilon(y)}{\xi^z_\varepsilon(z)}\right)^2=\frac{\big(\xi^z_\varepsilon(y)\big)^2}{\xi^z_\varepsilon(z)},
\end{eqnarray}
where we recall that $\Var\, h_\varepsilon(z)=\xi^z_\varepsilon(z)=-\log\left(\varepsilon/C(z;D)\right)$.}

{The conditional
expectation formula (\ref{condexpect}) for $\mu$  in Proposition \ref{p.hnlimit} gives in this case
\begin{equation}\label{muBeps}\mathbb E_h \left[\int_{B_\varepsilon(z)} e^{\gamma h} dz |
h_\varepsilon(z) \right]=\mathbb E_h \left[\mu_h({B_\varepsilon(z)}) |h_\varepsilon(z)
\right]={\tilde \mu}^1\left(B_\varepsilon(z)\right),
\end{equation}
where ${\tilde \mu}^1$ is the projected measure (\ref{e.hn})
\begin{equation}\label{e.tildehn1}
{\tilde \mu}^1(dy)=\exp\left(\gamma \tilde h^1(y)-\frac{\gamma^2}{2}\Var\, \tilde h^1(y)
+\frac{\gamma^2}{2}\log C(y;D)\right)dy.
\end{equation}
Note that when $y \in B_\varepsilon(z)$, $\xi^z_\varepsilon(y)=-\log\varepsilon-\tilde G_z(y)$, so that the difference
$\xi^z_\varepsilon(z)-\xi^z_\varepsilon(y)=\log C(z;D)+\tilde G_z(y)$ is harmonic in $y$ and its modulus is
equivalent to $\varepsilon |\tilde G_z'(z)|$ for $\varepsilon$ small, where $\tilde G_z'(z)$
is the derivative at $z$ of the harmonic extension $\tilde G_z$. It follows that in ball $B_\varepsilon(z)$,
$\xi^z_\varepsilon(y)/\xi^z_\varepsilon(z)=1+O(\varepsilon/\log\varepsilon)$.  Lastly, the function $C(y;D)$ is real
analytic. Hence from (\ref{projection}),
(\ref{varprojection}) and (\ref{e.tildehn1}) above, it follows from (\ref{muBeps}) that
 for $\varepsilon \to 0$
\begin{eqnarray}  \label{ExpectedQuantumArea}
\mathbb E \left[\mu({B_\varepsilon(z)}) |h_\varepsilon(z)\right]={\tilde \mu}^1\left(B_\varepsilon(z)\right)\simeq \mu_{\odot}\left(B_\varepsilon(z)\right),
\end{eqnarray}
where $\mu_\odot$ is defined as
\begin{eqnarray} \label{muodot}
 \mu_{\odot}\big(B_\varepsilon(z)\big)
:=\pi \varepsilon^{\gamma Q}e^{\gamma h_\varepsilon(z)},\,\,\,Q=2/\gamma+\gamma/2,
\end{eqnarray}
in the sense that the ratio of the  two quantities
tends to $1$ as $\varepsilon \to 0$.
Note that $\mu_{\odot}$ is not a measure, but simply a quantity defined on balls of the form $B_\varepsilon(z)$.}
  {Notice then that the first conditional measure $\mu^1\big(B_\varepsilon(z)\big)$ (\ref{mu1Bepsilon})
can also be written as
\begin{equation}\label{mu1muodot}
\mu^1\big(B_\varepsilon(z)\big)=\pi \varepsilon_0^2\, C_\varepsilon(z;D)^{\gamma^2/2}
\frac{\mu_{\odot}\big(B_\varepsilon(z)\big)}{\mu_{\odot}\big(B_{\varepsilon_0}(z)\big)}.
\end{equation}}

{For any $\varepsilon \leq
\varepsilon_0$ define then
\begin{eqnarray}
\label{tdef}
t &:=& -\log(\varepsilon/\varepsilon_0)\\
\label{Vtdef}
V_t&:=& h_\varepsilon(z) - h_{\varepsilon_0}(z).
\end{eqnarray}
 The law of $V_t$ is that
of a Brownian motion with $V_0 = 0$ (by Proposition
\ref{p.browniancircleaverages}).  We can then rewrite (\ref{mu1Bepsilon}) as
\begin{equation}\label{mu1BepsilonVt}
\mu^1\big(B_\varepsilon(z)\big)=\pi \varepsilon_0^{2}\,C_\varepsilon(z;D)^{\gamma^2/2}e^{\gamma V_t - \gamma Q t}.
\end{equation}
Similarly, we can rewrite (\ref{muodot}) identically as
\begin{equation}\label{mudotmu0dot}
 \mu_{\odot}\big(B_\varepsilon(z)\big)=\mu_{\odot}\big(B_{\varepsilon_0}(z)\big)\,e^{\gamma V_t - \gamma Q t},
\end{equation}
 in accordance with (\ref{mu1muodot}). In the expression (\ref{mu1BepsilonVt}) for the measure $\mu^1$, the first non constant factor is the same as
\eref{avC}, which is a slowly
 varying, deterministic function of $z$ (and of $\varepsilon$),
whereas in the expression (\ref{mudotmu0dot}) for $\mu_\odot$,
the first factor is the quantity $\mu_{\odot}\big(B_{\varepsilon_0}(z)\big)$, which is the exponential of a centered Gaussian
variable, $h_{\varepsilon_0}(z)$, whose variance, $-\log\left(\varepsilon_0/C(z;D)\right)$, varies slowly with $z$.
In both expressions, the latter factor is the
exponential of a simple Brownian motion with drift, and
is  \textit{independent} of $z$.}
\begin{definition} \label{Btilde} Let $\tilde B^\delta(z)$ be the largest Euclidean ball in $D$
centered at $z$ for which $e^{\gamma V_t - \gamma Q t}$ is equal to
$\delta$.  The radius of this ball is $e^{-T_A}$ where $$T_A := \inf
\{t: -V_t + Q t = A \},$$ and $A:= -(\log \delta) / \gamma$.
\end{definition}
As a step towards Theorem \ref{t.verystrongquantumKPZ} we prove the
following in this section, which is perhaps the most straightforward
form of KPZ to prove:

\begin{theorem} \label{t.expectationquantumKPZ}
Theorem \ref{t.verystrongquantumKPZ} holds with $B^\delta(z)$
replaced with $\tilde B^\delta(z)$.  That is, in the setting of
Theorem \ref{t.verystrongquantumKPZ}, if
$$\lim_{\varepsilon \to 0} \frac{ \log \mathbb E \mu_0 \{z: B_\varepsilon(z) \in \mathcal X\} }{\log \varepsilon^2} = x,$$
then it follows that, when $\mathcal X$ and $\mu$ are chosen
independently, we have
$$\lim_{\delta \to 0} \frac{ \log \mathbb E \mu \{z: \tilde B^\delta(z) \in \mathcal X\} }{\log \delta} = \Delta,$$
where $\Delta$ is the non-negative solution to $$ x =
\frac{\gamma^2}{4} \Delta^2 + \left( 1 -
\frac{\gamma^2}{4}\right)\Delta.
$$
\end{theorem}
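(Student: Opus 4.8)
The plan is to compute the expected quantum mass $\mathbb E\,\mu(F_\delta)$ of $F_\delta := \{z : \tilde B^\delta(z)\in\mathcal X\}$ by first using the rooted-measure (Palm) description of $\mu$ to exchange the expectation over $h$ with the integral over $z$, and then reducing to a one-dimensional first-passage problem for Brownian motion with drift. Throughout I fix a radius $\varepsilon_0$ with $B_{\varepsilon_0}(z)\subset D$ for all $z\in\tilde D$ and use it in Definition \ref{Btilde}, so $\tilde B^\delta(z)$ is defined for all small $\delta$; one checks at the end that the (arbitrary) choice of $\varepsilon_0$ does not affect the limiting exponent.

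The first step is the reduction. From Proposition \ref{p.hnlimit} and the construction of the rooted measure $\Theta$ --- whose $z$-marginal is $\propto C(z;D)^{\gamma^2/2}dz$ with $h\mid z$ distributed as the zero-boundary GFF plus $\gamma\xi^z_0$, and whose $h$-marginal has density $\mu_h(D)/\mathbb E\,\mu_h(D)$ with $z\mid h$ distributed as $\mu_h(dz)/\mu_h(D)$ --- one obtains, by equating the two disintegrations, the identity $\mathbb E\big[\int_D\Psi(z,h)\,\mu_h(dz)\big] = \int_D C(z;D)^{\gamma^2/2}\,\mathbb E[\Psi(z,h+\gamma\xi^z_0)]\,dz$ for bounded nonnegative jointly measurable $\Psi$. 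Since $\mathcal X$ is independent of $h$, taking $\Psi(z,h) = \mathbb P_{\mathcal X}[\tilde B^\delta_h(z)\in\mathcal X]$ and using $C(z;D)^{\gamma^2/2}\asymp 1$ on $\tilde D$ gives $\mathbb E\,\mu(F_\delta)\asymp\int_{\tilde D}\mathbb P[\tilde B^\delta_{h+\gamma\xi^z_0}(z)\in\mathcal X]\,dz$.

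Next I would identify the tilted radial process. For $\hat h_z:=h+\gamma\xi^z_0$ one has $(\hat h_z)_\varepsilon(z) = h_\varepsilon(z) + \gamma(-\log\varepsilon+\log C(z;D))$ for small $\varepsilon$ (mean-value property of the harmonic part of $\xi^z_0$), so the radial process $\hat V_t := (\hat h_z)_{\varepsilon_0e^{-t}}(z)-(\hat h_z)_{\varepsilon_0}(z)$ equals $V_t+\gamma t$, a Brownian motion with drift $\gamma$. Hence $\tilde B^\delta_{\hat h_z}(z)$ has radius $\varepsilon_0 e^{-T_A}$ with $T_A=\inf\{t:-\hat V_t+Qt=A\}=\inf\{t:-V_t+(Q-\gamma)t=A\}$ and $A=-(\log\delta)/\gamma$, where $Q-\gamma=2/\gamma-\gamma/2>0$, so $T_A<\infty$ a.s. Conditioning on $T_A$ (independent of $\mathcal X$) and using Fubini with $g(\varepsilon):=\mathbb E\,\mu_0\{z\in\tilde D:B_\varepsilon(z)\in\mathcal X\}=\int_{\tilde D}\mathbb P[B_\varepsilon(z)\in\mathcal X]\,dz$, this becomes $\mathbb E\,\mu(F_\delta)\asymp\mathbb E[g(\varepsilon_0 e^{-T_A})]$, where by hypothesis $g(\varepsilon)=\varepsilon^{2x+o(1)}$ and $g\le\mu_0(\tilde D)<\infty$.

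Finally I would evaluate this by a Brownian first-passage estimate. Applying optional stopping to the exponential martingale $\exp\big(a(-V_t+(Q-\gamma)t)-(a(Q-\gamma)+a^2/2)t\big)$ at $T_A$, with $a$ the positive root of $a(Q-\gamma)+a^2/2=\lambda$, gives $\mathbb E\,e^{-\lambda T_A}=\delta^{(\sqrt{(Q-\gamma)^2+2\lambda}-(Q-\gamma))/\gamma}$. For each $\eta>0$, splitting $\mathbb E[g(\varepsilon_0 e^{-T_A})]$ according to whether $\varepsilon_0e^{-T_A}$ is below the threshold $\varepsilon_1(\eta)$ where $g(\varepsilon)\in[\varepsilon^{2x+2\eta},\varepsilon^{2x-2\eta}]$, bounding the complementary event by $\mu_0(\tilde D)\,\mathbb P[T_A<\log(\varepsilon_0/\varepsilon_1)]$ --- which is super-polynomially small in $\delta$ by the Gaussian tail --- and using the displayed Laplace transform with $\lambda=2(x\mp\eta)$, one sandwiches $\mathbb E\,\mu(F_\delta)$ between constant multiples of $\delta^{\Delta(x+\eta)}$ and $\delta^{\Delta(x-\eta)}$, where $\Delta(y):=(\sqrt{(Q-\gamma)^2+4y}-(Q-\gamma))/\gamma$; letting $\eta\to0$ gives $\lim_{\delta\to0}\log\mathbb E\,\mu(F_\delta)/\log\delta=\Delta:=\Delta(x)$. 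Squaring $\gamma\Delta+(Q-\gamma)=\sqrt{(Q-\gamma)^2+4x}$ yields $x=\tfrac{\gamma^2}{4}\Delta^2+\tfrac{\gamma(Q-\gamma)}{2}\Delta$, and since $\tfrac{\gamma(Q-\gamma)}{2}=1-\tfrac{\gamma^2}{4}$ this is exactly \eqref{KPZ3}, with $\Delta\ge0$ because $x\ge0$. The hard part is the rooted-measure step: one must justify the Girsanov-type identity above for the indicator of an event depending on the entire family $\{h_\varepsilon(z)\}_{\varepsilon\le\varepsilon_0}$ rather than a finite-dimensional projection, and check joint measurability of $(z,h)\mapsto\tilde B^\delta_h(z)$ so that this identity and the subsequent Fubini steps are valid; the first-passage computation itself is routine. (When $x=0$ the two-sided bound degenerates, but then $g$ tends to a constant or to $0$ subpolynomially, and the conclusion $\mathbb E\,\mu(F_\delta)=\delta^{o(1)}$ follows directly from bounded convergence as $T_A\to\infty$.)
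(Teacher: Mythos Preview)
Your proposal is correct and follows essentially the same route as the paper's first proof: the rooted-measure (Palm) identity you write out is exactly the content of the $\Theta$ construction in Section~\ref{s.rootedmetrics}, your identification of $\hat V_t$ as Brownian motion with drift $\gamma$ matches the paper's observation that the $\Theta$ law of $V_t$ is $\mathcal B_t+\gamma t$, and your exponential-martingale computation of $\mathbb E\,e^{-\lambda T_A}$ is identical to \eqref{e.expmartingalekpz}. Your handling of the $o(1)$ in the hypothesis---splitting according to whether $T_A$ exceeds a threshold and killing the complementary event by a Gaussian tail bound---is a slightly more explicit version of the paper's sandwich \eqref{e.xcontinuity}, which simply asserts the inequality ``for sufficiently large $A$'' without isolating the small-$T_A$ contribution.
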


We present two proofs: the first based on exponential martingales,
the second based on large deviations theory and Schilder's theorem.
(The first proof is shorter, but readers familiar with large
deviations of Brownian motion will recognize that it is essentially
the second proof in the disguise.)

Both proofs use the fact that $$\mathbb E_h \; \mu \{z: \tilde
B^\delta(z) \in \mathcal X\}$$ is proportional to $$\Theta \{(z,h):
\tilde B^\delta(z) \in \mathcal X \},$$ to replace an expectation
computation with a probability computation. (Recall the definition
of $\Theta$ from Section \ref{s.rootedmetrics}.) While this
rephrasing is not strictly necessary for the expectation computation
below, it is conceptually quite natural.

We use the definitions (\ref{tdef}) and (\ref{Vtdef}) of $V_t$ given above, and assume that the
fixed $\varepsilon_0$ is smaller than the distance from $\tilde D$
(recall that this was the compact subset of $D$ in Theorem \ref
{t.verystrongquantumKPZ}) to $\partial D$.

As mentioned in Section \ref{s.rootedmetrics}, the $\Theta$
conditional law of $h$ given $z \in D$ is that of the original GFF
plus the deterministic function $- \gamma \log |z-y|-\gamma\tilde G_z(y)$.
 {The $\Theta$
conditional law of the circular average $h_\varepsilon(z)$ is then that of the original GFF
circular average plus $-\gamma\log\varepsilon +\gamma\log C(z;D)$.} Thus (for $z$
restricted to points of distance at least $\varepsilon_0$ from
$\partial D$) the $\Theta$ conditional law of (\ref{Vtdef})
{$V_t=h_{\varepsilon_0e^{-t}}(z)-h_{\varepsilon_0}(z)$} given $z$ is
that of ${\mathcal B}_t + \gamma t$, with $t=-\log(\varepsilon/\varepsilon_0)$, and
where ${\mathcal B}_t$ evolves
as a standard Brownian motion---in particular, $z$ is independent of
the process $V_t$.


\begin{proof}
The $\Theta$ law of $T_A$ is that of \begin{equation}
\label{stoppingtime} \inf\{t: {\mathcal B}_t + at=A
=-(\log \delta)/\gamma\},\, a:=Q-\gamma
=\frac{2}{\gamma}-\frac{\gamma}{2}> 0,
\end{equation}
where $(\pm){\mathcal B}_t$ is standard Brownian motion with
${\mathcal B}_0=0.$  Let $q_A$ be the $\Theta$ probability that the ball of radius $e^{-T_A}$ centered at $z$
is in $\mathcal X$.  Since $z$ is independent of $T_A$, the theorem
hypothesis implies that conditioned on $T_A$, the probability that the ball of radius $e^{-T_A}$ centered at $z$ is in $\mathcal
X$ is approximately $\exp{(-2xT_A)}$, in the sense that the ratio of
the logs of these two quantities tends to $1$ as $T_A \to \infty$.
Computing the expectation
\begin{equation}\label{e.exact}
\mathbb E \left[\exp{(-2xT_A)}\right],\end{equation} with respect to
a random $T_A$ will give us upper and lower bounds on $q_A$ since it
easily follows that
\begin{equation} \label{e.xcontinuity} \mathbb E
\left[\exp{(-2x_1T_A)}\right] \leq q_A \leq \mathbb E
\left[\exp{(-2x_2T_A)}\right],\end{equation} for any fixed $0< x_2 <
x < x_1$ and sufficiently large $A$.

To compute (\ref{e.exact}), consider for any $\beta$ the exponential
martingale $\exp( \beta {\mathcal B}_t - \beta^2 t/2 )$.
{Since $a>0$,
the stopping time $T_A$ is finite a.s. Since ${\mathcal B}_t+at\leq A$ for $t\in[0,T_A]$, the argument of the
exponential,
$\beta {\mathcal B}_t - \beta^2 t/2$, stays bounded from above, for $\beta \geq 0$, by
$\beta A-(\beta a+\beta^2/2)t\leq \beta A$, hence by a fixed constant.}
One can thus apply the exponential martingale at the stopping time $T_A <\infty$
$${\mathbb E} \left[ \exp(\beta {\mathcal B}_{T_A} - \beta^2 T_A/2) \right]=1.$$
By definition ${\mathcal B}_{T_A}=A-aT_A$. Thus,
$${\mathbb E} \exp[- (\beta a+\beta^2/2) T_A]=\exp(-\beta A).$$
Setting $2x:=\beta a+\beta^2/2$, we obtain
\begin{equation} \label{e.expmartingalekpz} {\mathbb E}
\exp(- 2x T_A) =\exp(-\beta A)=\delta^{\beta/\gamma}
.\end{equation} Now if we set $\Delta = \beta/\gamma$, and $a =
Q-\gamma = \frac{2}{\gamma} - \frac{\gamma}{2}$, we find that the
equation $2x:=\beta a+\beta^2/2$, with $\beta \geq 0$, is equivalent to the KPZ formula.
The continuity of this expression and (\ref{e.xcontinuity}) together
yield the theorem. \qed
\end{proof}

We remark that the above yields the explicit probability
distribution $P_A(t)$. The inverse Laplace transform $P_A(t)$ of
$f_A(x):={\mathbb E} \exp(- 2x T_A)$, with respect to $2x$, is the
probability density such that $P_A(t) dt:= {\textrm {Prob}}
\left(T_A\in [t, t+dt]\right)$. Its explicit expression is \cite{HandbookBrownian}
\begin{equation} \label{PA} P_A(t)=(2\pi)^{-1/2} At^{-3/2} \exp\left[-({1}/{2}) \left(A
t^{-1/2}-a t^{1/2}\right)^2\right],\end{equation} where as above we
have $A=-(\log \delta)/\gamma$, $t=-\log (\varepsilon/\varepsilon_0)$ and
$a=Q-\gamma$.

\subsection{Large deviations proof of circle average KPZ}
In this section, we present an alternative proof of Theorem
\ref{t.expectationquantumKPZ}, using Schilder's theorem.

\begin{lemma} \label{l.hittingldp} Fix a constant $a > 0$.  Let ${\mathcal B}_t$ be a standard Brownian
motion. For each $A > 0$, write
\begin{equation}
\label{stoppingtime2} T_A = \inf \{t: {\mathcal B}_t + a t = A\}.
\end{equation}
 Then the family of random variables $A^{-1} T_A$ satisfies a large
deviations principle with speed $A$ and rate function
$$I(\eta) = \frac{\eta}{2} \left(\frac{1}{\eta} - a \right)^2  = \frac{\eta^{-1}}{2} - a + a^2\frac{\eta}{2}.$$
\end{lemma}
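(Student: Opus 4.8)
The plan is to reduce the statement, by Brownian scaling, to a small-noise large deviations problem and then invoke Schilder's theorem together with a mild variant of the contraction principle. First I would note that if $\tilde{\mathcal B}^A_s := A^{-1}{\mathcal B}_{As}$, then by Brownian scaling $\tilde{\mathcal B}^A$ has the same law as $A^{-1/2}{\mathcal B}$, and substituting $t = As$ in the definition of $T_A$ gives
\begin{equation*}
A^{-1} T_A = \inf\{ s > 0 : \tilde{\mathcal B}^A_s + a s = 1 \} \stackrel{d}{=} \inf\{ s > 0 : A^{-1/2}{\mathcal B}_s + a s = 1\}.
\end{equation*}
Thus $A^{-1}T_A$ is a fixed, $A$-independent functional of the rescaled path $A^{-1/2}{\mathcal B}$, namely the first hitting time of level $1$ by $s \mapsto f(s) + as$.

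By Schilder's theorem (see, e.g., \cite{MortersPeres}), the processes $A^{-1/2}{\mathcal B}$, viewed in $C[0,\infty)$ with the topology of uniform convergence on compact sets, satisfy a large deviations principle with speed $A$ and good rate function $J(f) = \tfrac12 \int_0^\infty \dot f(s)^2\, ds$ when $f$ is absolutely continuous with $f(0)=0$, and $J(f) = +\infty$ otherwise. The passage from the usual statement on $C[0,1]$ to $C[0,\infty)$ is routine; since $a>0$ forces the hitting time to be finite, one may also simply truncate to $[0,M]$ and check that $\P(A^{-1}T_A > M)$ decays like $e^{-Ac(M)}$ with $c(M)\to\infty$.

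The natural next move is the contraction principle applied to the hitting-time map $\phi(f) := \inf\{s : f(s)+as = 1\}$, but this map is not continuous on $C[0,\infty)$ (a path can touch the line $1-as$ tangentially), and dealing with this is the one genuinely delicate point. I would get around it in the standard way: for fixed $\eta>0$ the functional $F_\eta(f) := \sup_{0\le s\le\eta}(f(s)+as)$ \emph{is} continuous, and since the paths $s\mapsto f(s)+as$ to which it is applied are continuous, one has the exact identity $\{A^{-1}T_A \le \eta\} = \{F_\eta(A^{-1/2}{\mathcal B}) \ge 1\}$. Applying the LDP upper bound to the closed set $[1,\infty)$ and the lower bound to the open set $(1,\infty)$, together with the easily checked fact that $\inf\{J(f): F_\eta(f) > 1\} = \inf\{J(f): F_\eta(f)\ge 1\}$, yields
\begin{equation*}
\lim_{A\to\infty} \tfrac1A \log \P(A^{-1}T_A \le \eta) = -\inf\{J(f) : \phi(f) \le \eta\},
\end{equation*}
and the symmetric argument with $\{F_\eta < 1\}$, $\{F_\eta \le 1\}$ handles $\P(A^{-1}T_A \ge \eta)$. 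Since $A^{-1}T_A$ is real-valued, these one-sided estimates, together with the goodness of the rate function established below, upgrade to the full LDP for $A^{-1}T_A$ with rate function $I(\eta) := \inf\{J(f) : \phi(f)=\eta\}$.

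It remains to evaluate this variational problem, which is elementary. If $\phi(f)=\eta$ then, since $s\mapsto f(s)+as$ is continuous and $<1$ on $[0,\eta)$, we must have $f(\eta) = 1-a\eta$, so Cauchy--Schwarz gives
\begin{equation*}
J(f) \ge \tfrac12\int_0^\eta \dot f(s)^2\, ds \ge \frac{1}{2\eta}\Big(\int_0^\eta \dot f(s)\, ds\Big)^2 = \frac{(1-a\eta)^2}{2\eta} = \frac{\eta}{2}\Big(\frac1\eta - a\Big)^2,
\end{equation*}
with equality for the straight line $f(s) = (1/\eta - a)\,s$, which satisfies $f(s)+as = s/\eta$ and hence has first hitting time of $1$ equal to exactly $\eta$. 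Therefore $I(\eta) = \tfrac{\eta}{2}(1/\eta - a)^2 = \tfrac{1}{2\eta} - a + \tfrac{a^2\eta}{2}$, as claimed; expanding shows $I$ is finite and convex on $(0,\infty)$, has a unique zero at $\eta = 1/a$ (the law-of-large-numbers value of $A^{-1}T_A$), and tends to $+\infty$ as $\eta\to 0^+$ or $\eta\to\infty$, so its level sets are compact and $I$ is good. The main obstacle, as noted, is the non-continuity of the hitting-time functional on the non-compact time domain; everything else is bookkeeping.
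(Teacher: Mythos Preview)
Your proof is correct and follows essentially the same route as the paper: Brownian scaling reduces $A^{-1}T_A$ to a fixed hitting-time functional of $A^{-1/2}\mathcal B$, Schilder's theorem supplies the path-space LDP, and the rate function is identified by the same linear minimizer. The paper's argument is in fact briefer than yours --- it simply invokes the contraction principle (Theorem 4.2.1 of Dembo--Zeitouni) for the hitting-time map without comment --- whereas you correctly flag that this map is not continuous and supply the standard repair via the continuous functionals $F_\eta(f)=\sup_{[0,\eta]}(f(s)+as)$; your observation that $\{A^{-1}T_A\le\eta\}=\{F_\eta\ge 1\}$ and that the infima over $\{F_\eta\ge 1\}$ and $\{F_\eta>1\}$ agree is exactly what is needed to patch this, so in this respect your write-up is more careful than the paper's.
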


\proof Schilder's Theorem (see Theorem 5.3.2 of \cite{DZ}) gives an
LDP for the sample path of $\alpha^{-1} {\mathcal B}_t$ (where
${\mathcal B}_t$ is standard Brownian motion) with speed $\alpha^2$
and rate function given by the Dirichlet energy. The variable
$A^{-1} T_A$ can be written as $\inf \{t:W_t + at = 1 \}$ where $W_t
= {\mathcal B}_{At}/A$, which has the same law as $\sqrt{A^{-1}}
{\mathcal B}_t$. Clearly, among all functions $\phi \in
H_1([0,\infty))$ satisfying $\phi(0) = 0$ and $\inf \{t :\phi(t) +
at = 1 \} \leq \eta$, the one with minimal Dirichlet energy is given
by $$\phi(t) =
\begin{cases} (\frac{1}{\eta} - a) t & t < \eta \\
(\frac{1}{\eta} - a) \eta & t \geq \eta. \\
\end{cases}
$$
By the contraction principle (Theorem 4.2.1 of \cite{DZ}), the rate
function desired in Lemma \ref{l.hittingldp} is given by this
minimal Dirichlet energy, i.e., $I(\eta) = \eta (\frac{1}{\eta} - a
)^2/2.$ \qed

\begin{lemma} \label{l.hittingldp2} Consider the following two part experiment.  First choose $T_A$ as
above.  Then toss a coin that comes up heads with probability $$e^{-
2x T_A}.$$  Then the probability that the coin comes up heads decays
exponentially in $A$ at rate $\beta$ where $\beta$ and $x$ are
related by \begin{equation} \label{KPZ} \beta = \inf_\eta
\left\{I(\eta) + 2x \eta\right\}, \end{equation} or equivalently by
\begin{equation} \label{KPZ2} 4x = \beta^2 + 2 a \beta.
\end{equation}
\end{lemma}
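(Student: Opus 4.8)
The plan is to read the heads probability as an exponential moment of the rescaled hitting time $A^{-1}T_A$ and to extract its logarithmic asymptotics from the large deviations principle of Lemma~\ref{l.hittingldp} by Varadhan's lemma. Write $p_A$ for the probability that the coin comes up heads. Conditioning on $T_A$ gives
\[
p_A \;=\; \mathbb E\bigl[e^{-2xT_A}\bigr] \;=\; \mathbb E\bigl[\exp(-2xA\cdot Y_A)\bigr],\qquad Y_A := A^{-1}T_A,
\]
and by Lemma~\ref{l.hittingldp} the family $(Y_A)_{A>0}$ satisfies an LDP on $[0,\infty)$ with speed $A$ and good rate function $I(\eta) = \tfrac{\eta}{2}\bigl(\tfrac1\eta-a\bigr)^2$.

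First I would invoke Varadhan's lemma (see, e.g., Theorem~4.3.1 of \cite{DZ}) with the continuous function $\eta\mapsto -2x\eta$. Since $x\ge 0$, this function is bounded above by $0$ on $[0,\infty)$, so the exponential-integrability hypothesis of Varadhan's lemma holds trivially, and one obtains
\[
\lim_{A\to\infty}\frac1A\log p_A \;=\; \sup_{\eta\ge 0}\{-2x\eta - I(\eta)\} \;=\; -\inf_{\eta\ge 0}\{I(\eta)+2x\eta\} \;=\; -\beta,
\]
with $\beta$ as in \eqref{KPZ}; this is exactly the assertion that $p_A$ decays exponentially in $A$ at rate $\beta$. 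Because $I(\eta)\to+\infty$ as $\eta\downarrow 0$, the infimum is attained at an interior point, so no attention to the boundary $\eta=0$ is required.

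It then remains to verify that \eqref{KPZ} is equivalent to \eqref{KPZ2}. Using $I(\eta)=\tfrac1{2\eta}-a+\tfrac{a^2}{2}\eta$, I would write $I(\eta)+2x\eta = \tfrac1{2\eta} + c\eta - a$ with $c:=\tfrac{a^2}{2}+2x>0$, and minimize the convex map $\eta\mapsto\tfrac1{2\eta}+c\eta$ over $\eta>0$: the minimum is $\sqrt{2c}$, attained at $\eta^\ast=(2c)^{-1/2}$. Hence $\beta=\sqrt{2c}-a=\sqrt{a^2+4x}-a$, i.e.\ $(\beta+a)^2=a^2+4x$, which rearranges to $4x=\beta^2+2a\beta$.

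The argument is routine; the only step calling for any care is the appeal to Varadhan's lemma, and here the usual moment condition is vacuous because the test function is bounded above. Alternatively one could bypass Varadhan's lemma altogether and split $\mathbb E[\exp(-2xAY_A)]$ over a small neighborhood of the minimizer $\eta^\ast$ and over its complement, applying the LDP lower and upper bounds respectively; but Varadhan's lemma packages precisely this split.
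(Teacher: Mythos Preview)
Your proof is correct and follows essentially the same route as the paper: both invoke Varadhan's integral lemma (Theorem~4.3.1 of \cite{DZ}) to obtain \eqref{KPZ}, then minimize $I(\eta)+2x\eta$ explicitly to derive \eqref{KPZ2}. Your argument is in fact slightly more careful than the paper's in that you explicitly verify the moment hypothesis of Varadhan's lemma (boundedness above of $\eta\mapsto -2x\eta$ for $x\ge 0$), and your minimization via $\tfrac{1}{2\eta}+c\eta\ge\sqrt{2c}$ is a touch slicker than the paper's direct differentiation, but these are cosmetic differences.
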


\proof The exponential decay with the exponent given in (\ref{KPZ})
is an immediate consequence of Varadhan's integral lemma (Theorem
4.3.1 of \cite{DZ}). To derive (\ref{KPZ2}) from (\ref{KPZ}), we set
the derivative of $I(\eta) + 2x \eta$ to zero and find $-\eta^{-2}/2
+ a^2/2 + 2x = 0$.  Hence the minimum is achieved at
\begin{equation} \label{e.ldpmin} \eta_0 = (a^2 + 4x)^{-1/2}.\end{equation} We then compute $\beta =
I(\eta_0) + 2x \eta_0$ to be
$$ (a^2 + 4x)^{1/2}/2 - a + a^2 (a^2 + 4x)^{-1/2}/2 + 2x (a^2 + 4x)^{-1/2}.$$
Simplifying, we have $\beta =(a^2 + 4x)^{1/2} - a,$ which is
equivalent to (\ref{KPZ2}). \qed

\proofof{Theorem \ref{t.expectationquantumKPZ}}
As above, we aim to show that $P\{\tilde B^\delta(z) \in \mathcal
X\}$ scales as $e^{-\beta A} = \delta^{\beta/\gamma} =
\delta^{\Delta}$ where $\Delta = \beta/ \gamma$, where $\delta$ and
$\varepsilon$  are related via the stopping time $T_A$
(\ref{stoppingtime}). Rescaling $T_A$ by $A^{-1}$ as in
(\ref{stoppingtime2}) puts us in the framework of large deviations
Lemma \ref{l.hittingldp}. As above, to describe the probability
$P\{\tilde B^\delta(z)\in \mathcal X\}$ we can imagine that we first
choose the radius $\varepsilon$ of $\tilde B^\delta(z)$ and then
toss a coin that comes up heads with probability $\varepsilon^{2x}$
to decide whether the ball is in $\mathcal X$.  This puts us in the
framework of the second large deviations Lemma \ref{l.hittingldp2}.
Using (\ref{KPZ2}), we have
$$4x = \beta^2 + 2a \beta = (\gamma \Delta)^2 + 2a\gamma \Delta,$$
where $a = Q-\gamma$. Plugging in this value of $a$ and simplifying,
we obtain the KPZ relation

$$x = \frac{1}{4} \left(\gamma^2 \Delta^2 + 2 \gamma(Q - \gamma)
\Delta \right) = \frac{\gamma^2}{4} \Delta^2 + \left( 1 -
\frac{\gamma^2}{4}\right)\Delta.$$

As in the previous proof, if the probability given $\varepsilon$ is
not exactly $\varepsilon^{2x}$, but the ratio of the log of this
probability to the log of $\varepsilon^{2x}$ tends to $1$ as
$\varepsilon \to 0$, we obtain the same theorem by using alternate
values of $x$ to give upper and lower bounds. \qed

The optimum $\eta_0 = (a^2 + 4x)^{-1/2}$ obtained in
(\ref{e.ldpmin}) has a natural interpretation --- it suggests that
(in the large deviations sense described above) $T_A/A$ is
concentrated near $\eta_0$.

Equivalently, since
$$\Delta = \frac{\beta}{\gamma} =
\frac{(a^2+4x)^{1/2} - a} \gamma,$$ we can say that $A/T_A$ is
concentrated near $\gamma \Delta + a = \gamma \Delta + Q - \gamma$,
which implies that $\frac{\log \delta}{\log \varepsilon}$ is
concentrated near $\gamma (\gamma \Delta + Q - \gamma)$. Note that
the same result can also be obtained directly from the explicit
probability density (\ref{PA}). This is the concentration one
obtains at an $\alpha$-thick point of the GFF $h$, where
\begin{equation} \label{e.alphathick} \alpha = \gamma - \gamma
\Delta. \end{equation} Very informally, this suggests that the quantum
support of a quantum fractal of dimension $\Delta$ is made up of
$\alpha$-thick points of $h$. This generalizes the idea of
Proposition \ref{p.thickpoint}, which concerns the case $\Delta=0$.

{\subsection{Tail estimates for quantum measure}}
\begin{lemma} \label{l.totalmass}
Let $D=\D = B_1(0)$ be the unit disc and fix $\gamma \in [0,2)$ and
take $\mu = e^{\gamma h(z)}dz$ as defined previously.  Then the
random variable $A = \log \mu(B_{1/2}(0))$ satisfies $p_A(\eta):=
\mathbb P[A < \eta] < e^{-C \eta^2}$ for some fixed constant $C>0$
and all sufficiently negative values of $\eta$.
\end{lemma}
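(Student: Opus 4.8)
The plan is to reduce everything to a Brownian‑motion estimate via the circle‑average (radial) decomposition of the GFF on $\D$ about the origin, and then to bound the lower tail of an independent residual quantum mass by a recursion on scales. First I would write the zero‑boundary GFF on $\D$ as $h=h_*+h^\perp$, an orthogonal (hence independent) decomposition in which $h^\perp$ has zero mean on every circle centered at $0$ and $h_*$ is the projection of $h$ onto the radially symmetric subspace, realized by the a.s.\ continuous (off the origin) function $z\mapsto h_{|z|}(0)$; by Proposition~\ref{p.browniancircleaverages} and $C(0;\D)=1$ we have $h_*(z)=B_{-\log|z|}$ for a standard Brownian motion $(B_t)_{t\ge0}$ with $B_0=0$. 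Since $\mu=\mu_h$ a.s.\ charges no single point and $h_*$ is continuous away from $0$, one checks that $\mu(dz)=e^{\gamma h_*(z)}\mu_{h^\perp}(dz)$ on $\D\setminus\{0\}$ (with $\mu_{h^\perp}$ defined as the $\varepsilon\to0$ limit of $e^{-\gamma(h_*)_\varepsilon(z)}\mu_\varepsilon(dz)$). Restricting to the annulus $\mathcal A:=B_{1/2}(0)\setminus B_{1/4}(0)$, where $-\log|z|$ ranges over the bounded interval $[\log2,2\log2]$, gives
\[
\mu(B_{1/2}(0))\;\ge\;\mu(\mathcal A)\;\ge\;e^{\gamma m}\,M,\qquad m:=\min_{t\in[\log2,\,2\log2]}B_t,\quad M:=\mu_{h^\perp}(\mathcal A),
\]
with $M>0$ almost surely and, crucially, $M$ \emph{independent} of $B$.

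This already produces the Gaussian exponent. Writing $p_A(\eta)=\mathbb P[\log\mu(B_{1/2}(0))<\eta]$ and splitting on whether $\log M<\eta/2$ (which then forces $m<\eta/(2\gamma)$), the reflection principle gives $\mathbb P[m<L]\le e^{-cL^2}$ for $L<0$, so $p_A(\eta)\le\mathbb P[\log M<\eta/2]+e^{-c'\eta^2}$ for a fixed $c'>0$. Thus the $e^{-C\eta^2}$ rate comes from the cost of driving the (exactly Gaussian) radial circle average down to level $\eta/\gamma$ within a bounded amount of ``time'' $t=-\log|z|$, and it remains to prove the same kind of lower‑tail bound for $\log M$ — equivalently, since $M$ is the quantum mass of a fixed region, for a fixed quantum disc mass. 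I would phrase this via negative moments: if $f(s):=\mathbb E[\mu(B_{1/2}(0))^{-s}]\le e^{as^2+bs}$ for all $s>0$ (and likewise for $M$), then Markov's inequality and optimization over $s$ yield $\mathbb P[\log\mu(B_{1/2}(0))<\eta]\le\inf_{s>0}e^{s\eta+as^2+bs}\le e^{-\eta^2/(8a)}$ for $\eta$ sufficiently negative, which fed back into the display above finishes the proof. To bound $f$, cover $\mathcal A$ by a fixed number $n$ of disjoint Euclidean balls $B_\rho(z_i)$; by the Markov property the fields inside these balls are conditionally independent given the field outside, each being an independent zero‑boundary GFF on $B_\rho(z_i)$ plus a harmonic function $\mathfrak h_i$ with $\mathfrak h_i(z_i)=h_\rho(z_i)$, and the Borell--TIS inequality controls $O_i:=\operatorname{osc}_{B_{\rho/2}(z_i)}\mathfrak h_i$ (a.s.\ finite, Gaussian‑square upper tail). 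Proposition~\ref{Qtransformation} identifies the inner mass $\mu_{\mathrm{inner},i}(B_{\rho/2}(z_i))$ in law with $\rho^{\gamma Q}$ times an i.i.d.\ copy $W_i$ of $\mu(B_{1/2}(0))$, independent of the Gaussians $G_i:=h_\rho(z_i)$ and of $(O_i)$, so that $\mu(B_{1/2}(0))\ge\rho^{\gamma Q}\sum_{i=1}^n e^{\gamma(G_i-O_i)}W_i$; applying $\sum_i a_i\ge n(\prod_i a_i)^{1/n}$ together with Gaussian moment‑generating‑function estimates then gives a recursion roughly of the shape $f(s)\le D^{s}e^{cs^2+c's}f(s/n)^n$ with fixed constants $D,c,c'$.

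The step I expect to be the main obstacle is making this last recursion actually yield $f(s)\le e^{as^2+bs}$. Unrolling it $J\asymp\log_n s$ times reduces $f(s)$ to $f$ evaluated at a bounded order $s/n^J$, at the cost of an accumulated factor of size $\exp(O(s^2)+O(s\log s))$ — which is of the desired form once one knows that \emph{some} negative moment is finite, i.e.\ $f(s_0)<\infty$ for a small $s_0>0$. Establishing this base case is the genuinely delicate point: because the construction is scale‑invariant there is no smallest scale at which to anchor an induction, so one must obtain an a priori finite negative moment by a separate argument (e.g.\ a convexity/comparison argument bounding $\mu$ below in distribution by an exactly scale‑invariant chaos whose negative moments are finite to all orders, or a direct estimate exploiting the explicit covariance structure), and then check that the various base quantities — the mass of $B_{1/2}(0)$, of a half‑disc, and of the annulus $\mathcal A$ — match up through the Markov/conformal surgery. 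Two soft inputs used throughout, which should be recorded, are that $\mu_h$ almost surely assigns positive mass to every nonempty open subset of $D$ and is almost surely nonatomic.
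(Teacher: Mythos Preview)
Your approach is genuinely different from the paper's, and you have correctly identified its weak point. The paper avoids the base-case problem you describe by a simple but decisive device.

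Instead of a radial decomposition followed by an $n$-ball AM--GM recursion on negative moments, the paper places \emph{two} disjoint subballs $B_{1/4}(\pm 1/4)$ inside $B_{1/2}(0)$, projects $h$ onto the space of functions harmonic in these balls to write $h=h'+(h-h')$ with $h-h'$ consisting of independent zero-boundary GFFs on the two subballs, and controls $\underline h:=\inf_{B_+\cup B_-}h'$ by a power-series argument (bounding the variances of the Taylor coefficients of the harmonic function $h'$) that yields Gaussian-square lower tails for $\underline h$. Scaling via Proposition~\ref{Qtransformation} identifies $A_\pm:=\log\mu_{h-h'}(B_\pm)$ with i.i.d.\ copies of $A-\gamma Q\log4$, independent of $\underline h$, and gives
\[
A\;\ge\;\max\{A_-,A_+\}+\gamma\,\underline h.
\]
The crucial point is the $\max$: on the event $\{\underline h\ge 0.1\,\eta/\gamma\}$ one obtains $\mathbb P[A<\eta]\le\bigl(p_A(0.9\,\eta+\gamma Q\log4)\bigr)^2$. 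Combined with the Gaussian-square tail for $\underline h$, this \emph{squaring} recursion on $p_A$ is self-improving: starting merely from $p_A(\eta)\to0$ as $\eta\to-\infty$ (i.e., $\mu(B_{1/2}(0))>0$ a.s., one of your ``soft inputs''), the iteration doubles the decay exponent at each step until it reaches the Gaussian-square ceiling imposed by the $\underline h$ term. No a priori finite negative moment is ever needed.

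Your AM--GM route with $n$ balls replaces the $\max$ by a product, and thus trades a squaring recursion on the tail probability for a multiplicative recursion on $f(s)=\mathbb E[\mu(B_{1/2}(0))^{-s}]$; as you observe, unrolling this gives the desired $e^{O(s^2)}$ growth \emph{provided} $f(s_0)<\infty$ for some $s_0>0$. That base case is exactly what the paper's $\max$-of-two-copies trick sidesteps. Your proposal is not wrong, but to close it self-containedly you would need either an independent argument for a finite negative moment (the external chaos-comparison you allude to would work but is heavier machinery) or to replace the AM--GM step by the $\max$ step---at which point the initial radial decomposition becomes superfluous and you have essentially rediscovered the paper's argument.
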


\proof Let $h'$ be the projection of $h$ onto the space of functions
in $H(\D)$ that are harmonic inside the two discs $B_{1/4}(1/4)$ and
$B_{1/4}(-1/4)$.  {(See Figure \ref{balls}.)}  Recall that the orthogonal complement of this
space is the space of functions supported on these discs, or more
precisely, the space $H[B_{1/4}(1/4) \cup B_{1/4}(-1/4)]$.  Hence,
the law of $h-h'$ is that of a sum of independent Gaussian free
fields on $B_{1/4}(1/4)$ and $B_{1/4}(-1/4)$ with zero boundary
conditions {(see, e.g., \cite{MR2322706})}.

Let $\underline h$ be the infimum of $h'$ over the union of the two
smaller discs $B_- := B_{1/8}(-1/4)$ and $B_+ := B_{1/8}(1/4)$. Write
$A_- = \log \mu_{h-h'}(B_-)$ and $A_+ = \log \mu_{h-h'}(B_+)$. By
Proposition \ref{Qtransformation} the law of each of $A_+$ and $A_-$
is the same as the law of $A + \gamma Q \log(1/4) = A - \gamma Q\log
4$; clearly $A_+$ and $A_-$ are independent of one another. Also,
$\mu_h (B_+) \geq e^{\gamma \underline h} \mu_{h-h'}(B_+)$ (and
similarly for $B_-$), which implies \begin{equation}
\label{A1A2bound} A \geq \max \{A_-, A_+ \} + \gamma \underline
h.\end{equation}

\begin{figure}
\begin{center}
\includegraphics[scale = .3]{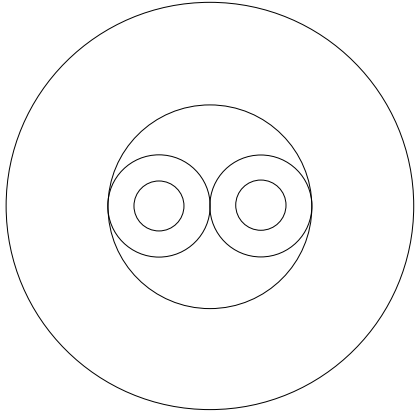}
\end{center}
\caption{\label{balls} The balls $B_1(0)$, $B_{1/2}(0)$, $B_{1/4}(\pm 1/4)$, and $B_{\pm} := B_{1/8}(\pm 1/4)$.}
\end{figure}

First we will show that the probability distribution of $\underline
h$ has superexponential decay. Since $h'$ is harmonic on $B_+$ (with
$h'(1/4) = h'_{1/8}(1/4)$) this $h'$ is the real part of an analytic
function on $B_+$.  In particular, $h'$ restricted to $B_+$ can be
expanded as $h'(1/4) + \sum_{n=1}^\infty \Re [a_n 4^n(z-1/4)^n]$ for
some complex $a_n$. Since each of the random variables $\Re a_n$ and
$\Im a_n$ is a real-valued linear functional of $h'$, it is a
Gaussian random variable. The variance of the latter can be estimated as follows.



{Under the conformal map $\varphi$ such that $\varphi(1/4)=0$ and $\varphi\left(B_{1/4}(1/4)\right)=B_1(0)$,
 the original domain $\mathbb D$ is mapped onto a new domain $D=\varphi(\mathbb D)$.}
  {Let us define on $\mathbb C$ the set of real functions
\begin{eqnarray}\phi_n(z)&:=&\Re [\Phi_n(z)],\,\,\, \psi_n(z):=\Im [\Phi_n(z)]\\
\Phi_n(z)&:=&\begin{cases}  \bar z^n /{(\pi n)^{1/2}}\,\,\,\, (|z| \leq 1)\\    z^{-n}/{(\pi n)^{1/2}}\, (|z| \geq 1) \end{cases}
\end{eqnarray}
The functions $\Re z^n$ and
$\Im z^n$ have on $\D$  the Dirichlet energy
\begin{equation}
\label{energy}
\int_\D n^2 |z^{n-1}|^2 dz = n^2 \int_0^1 r^{2n-2}2\pi rdr =  \pi
n,
\end{equation}
so that the set $\{\phi_n, \psi_n\}$ obeys the orthogonality relations in $\mathbb D$
$$(\phi_m,\phi_n)^{\mathbb D}_\nabla=(\psi_m,\psi_n)^{\mathbb D}_\nabla=\, \delta_{m,n};\,\,(\phi_m,\psi_n)^{\mathbb D}_\nabla=0.$$
From the conformal invariance of the Dirichlet inner product,  and from the expansion
$$h'(z)=h'(0) + \sum_{n=1}^\infty \Re a_n\, \phi_n(z) +\Im a_n\,  \psi_n(z)$$
we obtain the explicit form of the coefficients $a_n$
$$\Re a_n=(h',\phi_n)^{\mathbb D}_\nabla,\,\,\,\Im a_n=(h',\psi_n)^{\mathbb D}_\nabla ,$$
where after the conformal map $\varphi$, $h'$ is now understood as the projection on $\textrm{Harm}\, \mathbb D$ of the GFF $h$ with zero boundary conditions on  $\partial D$. This can be rewritten in the complex form
$$
a_n=(h', \Phi_n)^{\mathbb D}_\nabla=( h, \Phi_n)^{\mathbb D}_\nabla ,
$$
where use was made of the orthogonality of $h-h'$ and  $\Phi_n$.
By inversion with respect to the unit circle $\partial \mathbb D$, this is also $( h, \Phi_n)^{\mathbb C\setminus \mathbb D}_\nabla$, so that
$$ a_n=\frac{1}{2}( h, \Phi_n)^{\mathbb C}_\nabla,$$
where now the Dirichlet inner product extends to the whole plane.
Since $h$ vanishes outside of  $D$, we can also write
$$ (h, \Phi_n)^{\mathbb C}_\nabla=(h,\tilde {\Phi}_n)^{D}_\nabla$$
where $\tilde{\Phi}_n:={\Phi}_n-\Phi^H_n$, $\Phi^H_n$ being the harmonic extension to $D$ of ${\Phi}_n$ restricted to $\partial D$.
Specifying this separately for the real and imaginary parts of $a_n$ and $\Phi_n$, we have
$$\Re a_n=\frac{1}{2}(h,\tilde {\phi}_n)^{D}_\nabla ,\,\,\,\,  \Im a_n=\frac{1}{2}(h,\tilde { \psi}_n)^{D}_\nabla ,$$
where $\tilde{\phi}_n:={\phi}_n-\phi^H_n$, with a similar definition  for the imaginary component.
Since $\tilde{\phi}_n$ is  in  $H(D)$, we have
$$\Var\,  \Re a_n=\frac{1}{4}\Var (h,\tilde {\phi}_n)^{D}_\nabla=\frac{1}{4}(\tilde {\phi}_n,\tilde {\phi}_n)^{D}_\nabla ,$$
together with an entirely similar expression for $\Var\,  \Im  a_n$.
 We now wish to argue that $$(||\tilde {\phi}_n||^{D}_\nabla)^2 \leq (||\tilde {\phi}_n||^{\mathbb C}_\nabla)^2 \leq (|| {\phi}_n||^{\mathbb C}_\nabla)^2 =\frac{2}{\pi n}.$$
The first inequality is obvious, while the second one is  a consequence of the orthogonal decomposition $\phi_n=\tilde \phi_n + \phi^{H}_n$ on $\mathbb C$.
We thus conclude that the variances $\Var\, \Re a_n$ and $\Var\, \Im a_n$ are at most ${1}/{(2\pi n)}$.}

In particular, the variance of $|a_n| r^n$, for any fixed $r < 1$,
will decay exponentially in $n$.  Thus, the probability that even
one of the $a_n$ satisfies $|a_n| r^n > c$, where $c$ is a fixed
constant, decays quadratic-exponentially in $c$.  It follows that
the probability distribution function $\underline p$ of $\underline h$
satisfies {$\underline p(\eta) :=\mathbb P(\underline h < \eta)
< e^{-\underline C \eta^2}$} for some
$\underline C>0$ and all sufficiently negative $\eta$.

Now, let $P_1(\eta)$ be the probability that $\underline h < .1
\eta/\gamma$ {\em and} $A < \eta$.  Let $P_2(\eta)$ be the
probability that $A < \eta$ and $\underline h \geq .1 \eta/\gamma$.
Then $p_A(\eta) = \mathbb P[A < \eta] = P_1 + P_2$. From the above
discussion, we have $P_1(\eta) \leq e^{-\underline C \eta^2}$ for
all sufficiently negative values of $\eta$. Note from
(\ref{A1A2bound}) that

$$P_2(\eta) \leq \bigl[p_A(.9 \eta + \gamma Q \log 4)\bigr]^2$$
and
$$P_2(\eta) \leq \bigl[ P_1(.9 \eta + \gamma Q \log 4) + P_2(.9 \eta + \gamma Q
\log 4) \bigr]^2 \leq \bigl[e^{-C' \eta^2} + P_2(.9 \eta + \gamma Q
\log 4) \bigr]^2,$$ for some $C'$. Fix a sufficiently negative
$\eta_0$ and inductively determine $\eta_k$ via $\eta_{k-1} = .9
\eta_k + \gamma Q \log 4$. {The above can be stated  as
$$P_2(\eta_k) \leq \left( e^{-C' \eta_{k}^2} + P_2(\eta_{k-1})
\right)^2.$$  If we write $p_k = \frac{P_2(\eta_k)}{e^{-2C'
\eta_k^2}}$, then this can be restated as $p_k \leq
(1+p_{k-1}e^{-C' (2\eta_{k-1}^2 - \eta_k^2)})^2$.  It is easy to
see that we can have $p_k > 2$ for only finitely many $k$,
which
implies that the lemma holds for $C< \min\{\bar C, 2 C'\}$,} when restricted to the sequence
$\eta_k$. Because of the monotonicity of $p_A(\eta)$, this implies
the lemma for all $\eta$. \qed


\begin{lemma}  \label{l.totalmass2}
Fix $z$ and $\varepsilon$ so that $B_\varepsilon(z) \subset D$.
Then
$$\mathbb E[ \mu(B_\varepsilon(z))| h_\varepsilon(z)]  \simeq \pi \varepsilon^{\gamma Q} e^{\gamma
h_\varepsilon(z)} = \mu_{\odot}(B_\varepsilon(z)),$$ where $$Q = \frac{2}{\gamma} +
\frac{\gamma}{2},$$ as in Proposition \ref {Qtransformation}.
Moreover, conditioned on $h_{\varepsilon'}(z)$, for all
$\varepsilon'\geq \varepsilon$, we have that
\begin{equation}\label{massratio}\mathbb P\left[ \frac{\mu(B_\varepsilon(z))}{\mu_\odot(B_\varepsilon(z))} < e^{\eta}\right] \leq C_1 e^{-C_2 \eta^2},\end{equation} for some
positive constants $C_1$ and $C_2$ independent of $\eta\leq 0$, $z$,
$D$, and the values $h_{\varepsilon'}(z)$ for $\varepsilon' \geq
\varepsilon$.
\end{lemma}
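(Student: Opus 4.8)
The first assertion is essentially a restatement of computations already carried out in Section~\ref{expmart}: taking $n=1$ and $\tilde f_1=\xi^z_\varepsilon/\|\xi^z_\varepsilon\|_\nabla$ in Proposition~\ref{p.hnlimit}, the conditional expectation identity \eqref{muBeps} together with \eqref{ExpectedQuantumArea} and \eqref{muodot} gives $\mathbb E[\mu(B_\varepsilon(z))\mid h_\varepsilon(z)]=\tilde\mu^1(B_\varepsilon(z))\simeq\mu_\odot(B_\varepsilon(z))$. So only \eqref{massratio} requires work, and the plan is to reduce its lower tail to Lemma~\ref{l.totalmass} by a scaling argument that isolates the harmonic part of $h$ inside $B_\varepsilon(z)$.

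Write $\mathcal F:=\sigma(h_{\varepsilon'}(z):\varepsilon'\ge\varepsilon)$ for the $\sigma$-algebra on which we condition. By the Markov property of the GFF, $h|_{B_\varepsilon(z)}=\mathfrak h+h^\circ$, where $\mathfrak h$ is a.s.\ continuous and harmonic on $B_\varepsilon(z)$ and $h^\circ$ is a zero-boundary GFF on $B_\varepsilon(z)$ independent of the field on $B_\varepsilon(z)^c$; in particular $h^\circ$ is independent of $\mathcal F$, since each $\partial B_{\varepsilon'}(z)$ with $\varepsilon'\ge\varepsilon$ lies outside $B_\varepsilon(z)$. The mean value property yields the crucial identity $\mathfrak h(z)=h_\varepsilon(z)$. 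By locality of the measure and the non-centered case of Proposition~\ref{p.hnlimit} (applied conditionally on $\mathfrak h$, which is continuous), $\mu$ restricted to $B_\varepsilon(z)$ equals $e^{\gamma\mathfrak h}\mu_{h^\circ}$, so
$$\mu(B_\varepsilon(z))\ \ge\ \int_{B_{\varepsilon/2}(z)}e^{\gamma\mathfrak h(w)}\,\mu_{h^\circ}(dw)\ \ge\ e^{\gamma(\mathfrak h(z)-W)}\,\mu_{h^\circ}\bigl(B_{\varepsilon/2}(z)\bigr),$$
where $W:=\sup_{w\in\overline{B_{\varepsilon/2}(z)}}\bigl(\mathfrak h(z)-\mathfrak h(w)\bigr)<\infty$ since $\mathfrak h$ is harmonic on a neighbourhood of $\overline{B_{\varepsilon/2}(z)}$.

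The two factors are treated separately. Applying Proposition~\ref{Qtransformation} to the scaling map $u\mapsto z+\varepsilon u$ from $\mathbb D$ to $B_\varepsilon(z)$ sends $h^\circ$ to a zero-boundary GFF on $\mathbb D$ shifted by the constant $Q\log\varepsilon$, so $\mu_{h^\circ}(B_{\varepsilon/2}(z))=\varepsilon^{\gamma Q}\mu_{h^\circ_{\mathbb D}}(B_{1/2}(0))$ with $h^\circ_{\mathbb D}$ a zero-boundary GFF on $\mathbb D$; Lemma~\ref{l.totalmass} bounds its lower tail by $e^{-C\eta'^2}$ for sufficiently negative $\eta'$ with $C=C(\gamma)$, and it is independent of $\mathcal F$. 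For $W$, set $F(u):=\mathfrak h(z+\varepsilon u)-\mathfrak h(z)$ for $|u|\le 1/2$, so $F(0)=0$ and $W=-\inf_{|u|\le1/2}F(u)$. Using $\mathfrak h(z+\varepsilon u)=\mathbb E[h_{\varepsilon/4}(z+\varepsilon u)\mid h|_{B_\varepsilon(z)^c}]$ (the mean value property, valid since $B_{\varepsilon/4}(z+\varepsilon u)\subset B_\varepsilon(z)$) together with the fact that conditioning on a Gaussian $\sigma$-algebra only decreases variances, the conditional law of $F$ given $\mathcal F$ is Gaussian with $\Var(F(u)\mid\mathcal F)\le\Var(h_{\varepsilon/4}(z+\varepsilon u)-h_{\varepsilon/4}(z))$ and likewise for increments. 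These variances are bounded uniformly in $z,D,\varepsilon$: writing $\zeta=\xi^{w}_{\varepsilon/4}-\xi^{w'}_{\varepsilon/4}$, integration by parts and $-\Delta\xi^{w}_{\varepsilon/4}=2\pi\rho^{w}_{\varepsilon/4}$ give $\|\zeta\|_\nabla^2=\int\zeta\,d(\rho^{w}_{\varepsilon/4}-\rho^{w'}_{\varepsilon/4})\le 2\sup_{B_\varepsilon(z)}|\zeta|$, and the explicit form \eqref{xi} together with the maximum principle bounds $\sup|\zeta|$ by a universal constant (and by a universal multiple of $|w-w'|/\varepsilon$ in the increment case). Hence, conditionally on $\mathcal F$, $F$ is a Gaussian field on $\overline{B_{1/2}(0)}$ with uniformly bounded variance and a uniform Hölder$-\tfrac12$ modulus, so by the Borell--TIS inequality $W$ has a Gaussian upper tail about its conditional mean.

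Combining, $\mu(B_\varepsilon(z))/\mu_\odot(B_\varepsilon(z))=(\pi\varepsilon^{\gamma Q})^{-1}\int_{B_\varepsilon(z)}e^{\gamma(\mathfrak h(w)-h_\varepsilon(z))}\mu_{h^\circ}(dw)\ge\pi^{-1}e^{-\gamma W}\mu_{h^\circ_{\mathbb D}}(B_{1/2}(0))$, so the event in \eqref{massratio} lies in $\{\gamma W-\log\mu_{h^\circ_{\mathbb D}}(B_{1/2}(0))>-\eta-\log\pi\}$; a union bound over the two summands, each with a conditional $e^{-c(\cdot)^2}$ tail uniform in the conditioning, gives \eqref{massratio} for $\eta$ below some threshold, and for $\eta$ in the remaining bounded range with $\eta\le 0$ the bound is trivial after enlarging $C_1$. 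I expect the genuine obstacle to be making the control of $W$ truly uniform in the conditioned values $h_{\varepsilon'}(z)$: one must rule out that the conditional mean $\mathbb E[\mathfrak h(w)-\mathfrak h(z)\mid\mathcal F]$ — a harmonic function of $w$ vanishing at $z$ — drifts $\inf F$ downward by an amount growing with an atypical realization of $\mathcal F$. This is where the identity $\mathfrak h(z)=h_\varepsilon(z)$, the Brownian-martingale structure of the circle averages (Proposition~\ref{p.browniancircleaverages}), and the uniform variance bounds above must be combined carefully; the remaining bookkeeping (independence of $\mu_{h^\circ_{\mathbb D}}(B_{1/2}(0))$ from $\mathcal F$, measurability of the $\varepsilon'>\varepsilon$ circle averages with respect to the exterior field) is routine.
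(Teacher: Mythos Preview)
Your approach is essentially the same as the paper's: decompose $h|_{B_\varepsilon(z)}=\mathfrak h+h^\circ$ via the Markov property, lower-bound $\mu(B_\varepsilon(z))$ by $e^{\gamma(\mathfrak h(z)-W)}\mu_{h^\circ}(B_{\varepsilon/2}(z))$, reduce the second factor to Lemma~\ref{l.totalmass} by the scaling rule of Proposition~\ref{Qtransformation}, and control the harmonic oscillation $W$ by a Gaussian tail. The paper's $\tilde h$, $\underline A$, $A'$ are precisely your $\mathfrak h$, $-\gamma W$, $\log[\mu_{h^\circ}(B_{\varepsilon/2}(z))/\pi\varepsilon^{\gamma Q}]$; the only cosmetic difference is that the paper invokes the explicit power-series estimate from the proof of Lemma~\ref{l.totalmass} for the tail of $W$, whereas you cite Borell--TIS.

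The concern you raise at the end is not a genuine obstacle, and resolving it actually simplifies your argument. The non-constant part of $\mathfrak h$ is \emph{independent} of $\mathcal F$, so the conditional law of $W$ given $\mathcal F$ equals its unconditional law and uniformity is automatic. To see this, expand $\mathfrak h(z+\varepsilon u)=\mathfrak h(z)+\sum_{n\ge1}\Re[a_n u^n]$; each $a_n$ is a linear functional of $h$ of the form $(h,g_n)_\nabla$ where $g_n$ has zero angular average on every circle $\partial B_r(z)$ (it carries the factor $e^{\pm in\theta}$ in polar coordinates about $z$). The generators $h_{\varepsilon'}(z)=(h,\xi^z_{\varepsilon'})_\nabla$ of $\mathcal F$ are paired against radially symmetric $\xi^z_{\varepsilon'}$. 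A direct computation in polar coordinates shows $(g_n,\xi^z_{\varepsilon'})_\nabla=0$: the gradient of a radial function is purely radial, and the angular integral of $\partial_r g_n$ vanishes. Hence $\Cov(a_n,h_{\varepsilon'}(z))=0$ for all $n\ge1$ and all $\varepsilon'$, so by joint Gaussianity the $(a_n)_{n\ge1}$ are independent of $\mathcal F$. In particular $\mathbb E[\mathfrak h(w)-\mathfrak h(z)\mid\mathcal F]\equiv 0$, and your Borell--TIS step (or the paper's coefficient-by-coefficient bound) goes through with constants not depending on the realization of $\mathcal F$. With this observation, your proof is complete and matches the paper's.
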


\begin{proof}
{The first sentence is a restatement of (\ref{ExpectedQuantumArea}) and (\ref{muodot}).  It remains
to prove the second half.  For a fixed $\varepsilon$, we want to show that the probability that
{$$A := \log \frac{ \mu(B_\varepsilon(z))}{ \pi \varepsilon^{\gamma Q} e^{\gamma
h_\varepsilon(z)}} \leq \eta,\,\,\,\eta \leq 0 ,$$ decays quadratic-exponentially in $\eta$.}
Let $\underline A$ be the infimum of $\gamma\big(\tilde h(\cdot) -h_\varepsilon(z)\big)$
on $B_{\varepsilon/2}(z)$, where $\tilde h$ is $h$ projected onto $\textrm{Harm}\,B_\varepsilon(z)$.} With these definitions, one easily sees that
$$
A' \leq A-\underline A  ,
$$
where $A':=\log \left[\mu_{h-\tilde h}\left(B_{\varepsilon/2}(z)\right)/\pi \varepsilon ^{\gamma Q}\right]$.
In this proof, we let $\mathbb P^{\,\prime}$ denote probability conditioned on $z$ and on the map $\chi:
[\varepsilon,\varepsilon_0^z]\to \mathbb R : \chi({\varepsilon'}) := h_{\varepsilon'}(z)$.

{One may use the techniques in the proof of Lemma \ref{l.totalmass} to show that
{$\mathbb P^{\,\prime}[\underline A <\eta]$} (which is a priori a function of
$\chi$) decays quadratic-exponentially in $\eta$, uniformly in $\chi$ (and hence in $h_\varepsilon(z)$).
{Then we have by the above construction $
\mathbb P^{\,\prime}[A - \underline A < \eta]\leq \mathbb P[A'< \eta]$, so that Lemma \ref{l.totalmass} applied to $p_{A'}(\eta)=
\mathbb P[A'< \eta]$} implies that {$\mathbb P^{\,\prime}[A - \underline A < \eta]$} decays
quadratic-exponentially in $\eta$, also uniformly in $\chi$.  We conclude that the probability that {\it either}
$\underline A <\eta/2$ {\it or} $A-\underline A
< \eta/2$ decays quadratic-exponentially in $\eta$, and the claim follows. \qed}
\end{proof}

Roughly speaking, the above lemma says that the total quantum area
in a ball is unlikely to be a lot smaller than the area we would
predict given the average value of $h$ on the boundary of that ball;
the following says that (even when we use the $\Theta$ measure), the
total quantum area has some constant probability to be (at least a
little bit) smaller than this prediction.

\begin{lemma} \label{l.totalmass3}
Let $z$ and $h$ be chosen from $\Theta^{\tilde D}$ (as defined in Section \ref{s.rootedmetrics}) for a fixed compact subset
$\tilde D$ of $D$, and fix a $\delta
> 0$, with quantum balls $B^{\delta}(z)$ and $\tilde B^{\delta}(z)$
defined as in Definition \ref{Bdelta} and Definition \ref{Btilde}.
(Definition \ref{Btilde} implicitly makes use of a constant $\varepsilon_0$,
which we take here to be $\sup \{ \varepsilon' : B_{\varepsilon'}(\tilde D)
\subset D \}$.)

Conditioned on the radius of $\tilde B^\delta(z)$, the conditional probability
that $\tilde B^\delta(z) \subset B^\delta(z)$ is bounded below by a
positive constant $c$ independent of $D$, $\tilde D$, and $\delta$.
\end{lemma}

\begin{proof}
{
Define $\eps$ to be the radius of $\tilde B^\delta(z)$.
Let $\mathbb P^{\,\prime}$ be $\Theta^{\tilde D}$ probability conditioned on $z$ and on the map
$\chi: [\varepsilon,\varepsilon_0]\to \mathbb R:
\chi(\varepsilon') := h_{\varepsilon'}(z)$.  As before we assume $\varepsilon$ is less than the distance $\varepsilon_0$ from
$\tilde D$ to $\partial D$.  It now suffices for us to show that $$\mathbb P^{\,\prime} \left(\tilde
B^\delta(z) \subset B^\delta(z) \right) = \mathbb P^{\,\prime} \left(\mu(\tilde B^\delta(z))
< \delta\right)$$ is bounded below independently of $\chi$ and $z$.}

{Consider now the map $\overline \chi: (0, \varepsilon_0] \to \mathbb R$ given by $\chi(\varepsilon') = h_{\varepsilon'}(z)$ for all $\varepsilon' \in (0, \varepsilon_0]$.  Let $h^{\overline\chi}$ denote the conditional expectation of $h$ (in the standard GFF probability measure) given $\overline \chi$.  Clearly $h^{\overline\chi}(y)$ is a.s.\ radially symmetric, with center $z$, and equals zero for $ \varepsilon_0 <|y-z| $.  It corresponds to the projection of $h$ onto the space of functions with these properties.  (We similarly define $h^\chi$, so that $h^\chi$ is constant in $B_\varepsilon(z)$ and coincides with $h^{\overline \chi}$ outside.)  Note that, given $z$, the $\Theta^{\tilde D}$ law of $h^{\overline\chi}$ is that of $\tilde h^{\overline\chi}$ (where $\tilde h$ is a standard GFF) plus a deterministic function with the same properties: the function $$\zeta(y) := \gamma[\xi_0^z(y)-\xi_{\varepsilon_0}^z(y)] = \begin{cases} -\gamma \log \frac{|y-z|}{\varepsilon_0} & y \in B_{\varepsilon_0}(z) \\ 0 & \mathrm{otherwise}\end{cases}.$$
Although $h^{\overline\chi}$ is a projection onto an infinite dimensional space, it is not hard to see (e.g., by approximating with finite dimensional spaces) that the obvious analog of \eqref{condexpect} in Proposition \ref{p.hnlimit} still holds, i.e., taking expectation with respect to $\Theta^{\tilde D}$ we have
$$\mathbb E [ \mu(A) | \overline\chi] = \mu^{\overline\chi}(A),$$
where $$\mu^{\overline\chi}:=  \exp \left( \gamma h^{\overline\chi}(y) +
\frac{\gamma^2}{2}\log\frac{|y-z|}{\varepsilon_0} + \frac{\gamma^2}{2}\log C(y; D)
\right)dy,$$
for $|y-z| \leq \varepsilon_0$, and in this range we have for some $C_0\geq1$ (depending on $\tilde D$ and $D$) that
\begin{equation}\label{Cbound} C_0^{-1}\mu^{\overline\chi}_0 \leq  \mu^{\overline\chi} \leq C_0\,\mu^{\overline\chi}_0,\end{equation} where
\begin{equation}\label{muchi}\mu^{\overline\chi}_0:=  \exp \left( \gamma h^{\overline\chi}(y) +
\frac{\gamma^2}{2}\log\frac{|y-z|}{\varepsilon_0}\right)dy.\end{equation}
The fact that $\mu^{\overline\chi}$ and hence $\mu^{\overline\chi}_0$ is almost surely finite follows from the fact that it is a conditional expectation of $\mu=\mu_h$ with
respect to $\Theta^{\tilde D}$, and $\mu$ is $\Theta^{\tilde D}$ almost surely finite.  It can also be seen directly from \eqref{muchi}, using
the same argument as in the proof of Proposition \ref{p.hnlimit}: note first that \eqref{muchi} becomes integrable for $\gamma \in [0,2)$ if $h^{\overline \chi}(y)$ is replaced with its expectation $\zeta(y)$, and second that $|h^{\overline \chi}(y) - \zeta(y)|$ a.s.\ does not grow too quickly as $y \to z$.}

{Note that $h_\varepsilon^{\overline \chi}(z) = h_\varepsilon^{\chi}(z) = h^\chi(z) = h_{\varepsilon}(z) - h_{\varepsilon_0}(z)$.
From definition 
 \eqref{muodot} and from \eqref{mudotmu0dot}
 and the definition (\ref{Btilde}) of $\tilde B^\delta(z)=B_\varepsilon(z)$,
we have that $$\left(\frac{\varepsilon}{\varepsilon_0}\right)^2 \exp \left( \gamma h^\chi_\varepsilon(z) + \frac{\gamma^2}{2}\log \frac{\varepsilon}{\varepsilon_0}\right)= \delta$$
and that
\begin{equation}\label{deltaratio}
\delta^{-1}\mu_0^{\bar \chi}(\tilde B^{\delta}(z))
\end{equation}
is a random variable independent of
$\varepsilon$ and $\chi$.  (It depends only on the Brownian process given by $\tilde {\mathcal B}_{s} := {\mathcal B}_{s+t} - {\mathcal B}_t$ defined for $s \geq 0$, where ${\mathcal B}_{s'}:= h^{\overline \chi}_{e^{-s'}}(z) - \gamma s'$ and $t = -\log (\varepsilon/\varepsilon_0)$.  Note that $\tilde {\mathcal B}_s$ is a standard Brownian motion in $s \geq 0$, independent of $\varepsilon$, $z$, and $\chi$.)}

{It is not hard to see that this random variable is not bounded below by any number greater than zero; thus there is an event --- call it $\mathcal A$ --- independent of $\chi$, and occurring with a probability  
$\mathbb P^{\,\prime}(\mathcal A)$ bounded below by some $c'>0$, on which \eqref{deltaratio} is less than a small number, say $1/100$ (indeed, we may assume the same holds with $\mu^{\overline \chi}$ replacing $\mu^{\overline\chi}_0$, because of \eqref{Cbound}).
Given this claim, it follows that on the event $\mathcal A$, one has $\mathbb E [ \mu\big(\tilde B^\delta(z)\big) | \overline\chi] = \mu^{\overline\chi}\big(\tilde B^\delta(z)\big)<\delta/100$, so that} the conditional probability that $\mu(\tilde
B^\delta(z)) < \delta$ is at least $.99$.
 The lemma follows using the constant $c=.99 c'$. \qed
\end{proof}

\subsection{Proof of interior KPZ}
In this section we derive Theorem \ref{t.verystrongquantumKPZ} as a
consequence of Lemma \ref{l.totalmass2}, Lemma \ref{l.totalmass3},
and the arguments in Theorem \ref{t.expectationquantumKPZ}.

\proofof{Theorem \ref{t.verystrongquantumKPZ}} We use the same
notation as in Theorem \ref{t.expectationquantumKPZ}, but we write
$\overline T_A = - \log (\overline \varepsilon/\varepsilon_0)$ where $\overline
\varepsilon$ is the radius of $B^\delta(z)$.  In this proof, we use the probability measure
$\Theta^{\tilde D}$ and $\mathbb E$ denotes expectation with respect to $\Theta^{\tilde D}$.
The proof of
Theorem \ref{t.expectationquantumKPZ} carries through exactly once
we show that (when $h$ is chosen from $\Theta^{\tilde D}$) \begin{equation}\label{e.ATAlimit} \lim_{A \to \infty}
\frac{\log \mathbb E \left[\exp{(-2x\overline T_A)}\right]}
{\log\mathbb E \left[\exp{(-2x T_A)}\right]} = 1,\end{equation}
since this implies the analog of (\ref{e.expmartingalekpz}) with $T_A$ replaced by $\overline T_A$.

Note that the numerator of (\ref{massratio}) is related to $\overline T_A$ while the denominator
is related to $T_A$; if the numerator and denominator were precisely equal for all $\varepsilon$,
we would have $\overline T_A = T_A$.

{For any $a$,
$0 < a < 1$, let $\varepsilon_a$ be the value for which $B_{\varepsilon_a}(z)=\tilde B^{\delta^a}(z)$.  Then
 $\mu_\odot\left( B_{\varepsilon_a}(z)\right)
=\delta^a\mu_\odot\left( B_{\varepsilon_0}(z)\right)$. This corresponds to a stopping time
$T_{a A}=-\log(\varepsilon_a/\varepsilon_0)$.}

{On the event  $\overline T_{A} < T_{aA}$, we have
$\varepsilon_a < \overline \varepsilon$ so that $\mu\left(B_{ \varepsilon_a}(z)\right)
\leq \mu\left(B_{\overline \varepsilon}(z)\right)=\delta $. It follows that
$$
\mu\left(B_{ \varepsilon_a}(z)\right)/\mu_\odot\left( B_{\varepsilon_a}(z)\right)\leq
\delta^{1-a}/\mu_\odot\left( B_{\varepsilon_0}(z)\right).
$$ Thanks to definition (\ref{muodot}), the probability that $\mu_\odot\left( B_{\varepsilon_0}(z)\right)\leq \delta^{(1-a)/2}$
decays quadratic-exponentially in $A=-\log (\delta/\gamma)$ when $\delta \to 0$. On the event of the contrary,
 $\mu_\odot\left( B_{\varepsilon_0}(z)\right)>\delta^{(1-a)/2}$, one then has
$
\mu\left(B_{ \varepsilon_a}(z)\right)/\mu_\odot\left( B_{\varepsilon_a}(z)\right)<
\delta^{(1-a)/2},
$
whose probability, by Lemma \ref{l.totalmass2} applied for $B_{\varepsilon_a}(z)$ and  $\eta=-\gamma A(1-a)/2$, also decays
quadratic-exponentially in $A$. This implies that the probability that
$\overline T_{A} < T_{aA}$ decays superexponentially in $A$.} This implies that
$$\lim_{A \to \infty} \frac{\log \mathbb E \left[\exp{(-2x\overline T_{A})}\right]}
{\log\mathbb E \left[\exp{(-2x T_{aA})}\right]} \leq 1.$$  {Since
this holds for all $a<1$, it follows immediately from the continuity of the coefficient of $A$ in
the exponent in (\ref{e.expmartingalekpz}) that
$$\lim_{A \to \infty} \frac{\log \mathbb E \left[\exp{(-2x\overline T_{A})}\right]}
{\log\mathbb E \left[\exp{(-2x T_{A})}\right]} \leq 1.$$
From Lemma
\ref{l.totalmass3}, it follows that, conditioned on $T_A$, the $\Theta^{\tilde D}$ probability that $\overline T_A < T_A$ is at least $c >0$, which implies
$$c\, \mathbb E \left[\exp{(-2xT_{A})}\right] \leq \mathbb E \left[\exp{(-2x \overline T_{A})}\right]$$
 for any $x \geq 0$, which in turn implies the equivalence of logarithms in  (\ref{e.ATAlimit}). \qed
}

\section{Box formulation of KPZ}
In this section we prove Proposition \ref{p.boxscalingdimension}.

\proofof{Proposition \ref{p.boxscalingdimension}}  The first fact is
standard; simply observe that if $\varepsilon$ is a power of $2$ then
$S_{\varepsilon}(X) \subset B_{2\varepsilon}(X)$, {hence $\mu_0(S_{\varepsilon}(X)) \leq
\mu_0(B_{2\varepsilon}(X))$,} since the ball of
radius $2 \varepsilon$ about a point contains any diadic box of
width $\varepsilon$ that contains the same point. Similarly,
$B_{2\varepsilon}(z)$ is contained in the union of a diadic box
--- of width $2\varepsilon$, containing $z$ --- with the eight diadic
boxes of the same size whose boundaries touch its boundary. This
implies that $B_{2\varepsilon}(X)$ is contained in the union of
$S_{2\varepsilon}(X)$ and corresponding $8$ translations of
$S_{2\varepsilon}(X)$, so $\mu_0(B_{2\varepsilon}(X)) \leq 9
\mu_0(S_{2\varepsilon}(X))$.

For the second part, we first argue that $X$ has quantum scaling exponent $\Delta$ if
and only if \eqref{e.qse} holds.  We use the notation
in the proof Theorem \ref{t.expectationquantumKPZ} but set $\tilde T_A$ to be
 {$- \log (\tilde \varepsilon/\varepsilon_0)$, where $\tilde \varepsilon$ is the largest value of $\varepsilon$ for
which the diadic box $S_\varepsilon(z)$} with edge length $\varepsilon$ has $\mu$ area
at most $\delta$.  The remainder of the argument is essentially the
same as the proof of Theorem \ref{t.verystrongquantumKPZ}.
Just as \eqref{e.ATAlimit} was sufficient in that case, it is enough for
us to verify that when $h$ is chosen from $\Theta^{\tilde D}$ and $X$ is chosen independently,
we have the following analog of \eqref{e.ATAlimit}
 {(where $\overline T_A$ is replaced by $\tilde T_A$):
\begin{equation}\label{e.ATAlimit2} \lim_{A \to \infty}
\frac{\log \mathbb E \left[\exp{(-2x\tilde T_A)}\right]}
{\log\mathbb E \left[\exp{(-2x  T_A)}\right]} = 1.\end{equation}}

The proof is essentially the same as the proof of (\ref{e.ATAlimit}), but we will sketch the differences here.
As in the proof of \eqref{e.ATAlimit} one argues first that the probability that
 {$\tilde T_{A} <  T_{aA}$, with $0<a<1$,} decays superexponentially in $A$ and
 {by continuity when $a \to 1$} concludes that
 {$$\lim_{A \to \infty} \frac{\log \mathbb E \left[\exp{(-2x \tilde T_{A})}\right]}
{\log\mathbb E \left[\exp{(-2x  T_{A})}\right]} \leq 1.$$}  The only difference is that
one has to first obtain a modified Lemma \ref{l.totalmass2}, in which the $\mu(B_\varepsilon(z))$ in \eqref{massratio} is
replaced with $\mu(S_{\varepsilon/2}(z))$; this straightforward exercise is left to the reader.
 {Then, using the same notation as in the proof of Theorem \eqref{t.verystrongquantumKPZ}, one
chooses $\varepsilon_a$ so that $ B_{\varepsilon_a}(z) = \tilde B^{\delta^a}(z)$.  Then $\mu_\odot\left( B_{\varepsilon_a}(z)\right)=
\delta^{a}\mu_\odot\left( B_{\varepsilon_0}(z)\right)$. On
   the event $\tilde T_{A} <  T_{aA}$, one has $\varepsilon_a < \tilde \varepsilon$, so that
  $\mu(S_{\varepsilon_a/2}(z)) \leq  \mu(S_{\varepsilon_a}(z))\leq \mu(S_{\tilde\varepsilon}(z))<\delta$,
  from which it follows that
  $$
\mu\left(S_{ \varepsilon_a/2}(z)\right)/\mu_\odot\left( B_{\varepsilon_a}(z)\right)\leq
\delta^{1-a}/\mu_\odot\left( B_{\varepsilon_0}(z)\right).
$$
The discussion then continues identically, depending on whether $\mu_\odot\left( B_{\varepsilon_0}(z)\right)
\leq \delta^{(1-a)/2}$ holds, the probability of which has superexponential decay in $A=-(\log \delta )/\gamma$,
or the contrary, which also has superexponential decay by application of the modified Lemma \ref{l.totalmass2} to
 the resulting inequality
  $$
\mu\left(S_{ \varepsilon_a/2}(z)\right)/\mu_\odot\left( B_{\varepsilon_a}(z)\right)\leq
\delta^{(1-a)/2}.
$$
}

Next, as in the proof of \eqref{e.ATAlimit}, one argues that
 {$\mathbb P^{\,\prime}[\tilde T_A <  T_A +\log \rho] \geq c >0$, for some
fixed constant $\rho \geq 4$, which implies
$$c\,\rho^{-2x}\, \mathbb E \left[\exp{(-2xT_{A})}\right] \leq \mathbb E \left[\exp{(-2x \tilde T_{A})}\right]$$
for any $x \geq 0$, which in turn implies (\ref{e.ATAlimit2}).}  The difference here is that one must first
obtain a modified version of
Lemma \ref{l.totalmass3} in which the event $\tilde B^\delta(z) \subset B^\delta(z)$ is replaced
with the event that {$S^\delta(z)=S_{\tilde \varepsilon}(z)$ has a  width $\tilde \varepsilon$
larger than a fixed constant $\rho^{-1}$
times the radius $\varepsilon$ of $\tilde B^\delta(z)=B_\varepsilon(z)$, which can be easily proven as follows.
First, recall that by definition of $\tilde \varepsilon$,
$\mu\left(S_{\tilde \varepsilon}(z)\right) <\delta \leq \mu\left(S_{2\tilde \varepsilon}(z)\right)$.
We thus have
$$
\mu\left(B_{\overline \varepsilon}(z)\right)=\delta\leq \mu\left(S_{2\tilde \varepsilon}(z)\right)\leq
\mu\left(B_{4\tilde \varepsilon}(z)\right),
$$
hence $\overline \varepsilon\leq 4\tilde \varepsilon$. From Lemma \ref{l.totalmass3} there is a finite probability $c$ that $\tilde B^\delta(z) \subset B^\delta(z)$,
i.e., that $\varepsilon \leq \overline \varepsilon$, hence that $\varepsilon \leq 4\tilde \varepsilon$, which proves
the modified version of Lemma \ref{l.totalmass3} for $\rho \geq 4$.}

Next, we observe that
the above arguments still work if we replace the $S^\delta(z)$ in \eqref{e.qse}
with $\hat S^\delta(z)$, defined to be the diadic parent of $S^\delta(z)$ --- this only changes $\tilde T_A$ by
an additive constant.  Thus \eqref{e.qse} is equivalent
to the analog of \eqref{e.qse} in which $S^\delta(z)$ is replaced with $\hat S^\delta(z)$.
Now define $\hat N$ analogously to $N$ (counting $\hat S^\delta(z)$ squares instead of $S^\delta(z)$ squares).  We obtain
the equivalence of  \eqref{e.qse} and \eqref{e.ne} by observing that
\begin{eqnarray*}\lim_{\delta \to 0} \frac{\log \mathbb E \left[\mu(S^\delta(X))\right]}{\log \delta} & \leq & \lim_{\delta \to 0} \frac{ \log\mathbb E \left[\delta N(\mu, \delta,
X)\right]}{\log \delta} \\ & \leq & \lim_{\delta \to 0} \frac{\log\mathbb E \left[\delta \hat N(\mu, \delta, X)\right]}{\log \delta} \\ & \leq & \lim_{\delta \to 0} \frac{\log \mathbb E \left[\mu(\hat S^\delta(X))\right]}{\log \delta}.\end{eqnarray*}
The first and last inequalities are true because, by definition, $\mu(S^\delta(z)) \leq \delta$ and $\mu(\hat S^\delta(z)) \geq \delta$.
The middle inequality is true because $N(\mu, \delta, X) \leq 4 \hat N(\mu, \delta, X)$.
\qed

\section{Boundary KPZ} \label{boundaryKPZsection}
\subsection{Boundary semi-circle average}
Most of the results in this paper about random measures on $D$ have
straightforward analogs about random measures on $\partial D$.  The
proofs are essentially identical, but we will sketch the differences
in the arguments here.

Suppose that $D$ is a domain  {with piecewise linear boundary or a domain with a smooth boundary containing  a linear piece $\underline{\partial D}\subset \partial D$} and that
$h$ is an instance of the GFF on $D$ with free boundary conditions,
normalized to have mean zero.

This means that $h = \sum_n \alpha_n f_n$ where the $\alpha_n$ are
i.i.d.\ zero mean unit variance normal random variables and the
$f_n$ are an orthonormal basis, with respect to the inner product
$$(f_1, f_2)_\nabla := (2\pi)^{-1} \int_D \nabla f_1(z) \cdot \nabla f_2(z) dz,$$
of the Hilbert space closure $H(D)$ of the space of $C^\infty$
bounded real-valued (but {\em not} necessarily compactly supported)
functions on $D$ with mean zero.

Note that if $f$ is a compactly supported smooth function on $D$ for
which $-\Delta f = \rho$, then integration by parts implies that the
variance of $(h, \rho)$ is the Dirichlet energy of $f$---same as in
the zero boundary case.  Similarly, suppose that $f$ is a smooth
function that is {\em not} compactly supported but has a gradient
that vanishes in the normal direction to $\partial D$, and we write
$\rho = - \Delta f$.  Then integration by parts implies that the
variance of $(h, \rho)$ is $2\pi(f,f)_\nabla$.

We can also make sense of $h_\varepsilon(z)$, for a point $z$ on a
linear part $\underline{\partial D}$ of the boundary of $D$, to be the mean value of $h$ on the semicircle of
radius $\varepsilon$ centered at $z$ and contained in the domain
$D$. {For $z$ fixed, let $\varepsilon_0$ be chosen small enough so that
$B_{\varepsilon_0}(z)\cap \partial D \subset \underline{\partial D}$  and exactly one
semi-disc of $B_{\varepsilon_0}(z)$ lies in $D$. Define for any $\varepsilon \leq\varepsilon_0$
$$
h_\varepsilon(z)=(h,\hat \rho_\varepsilon^z),
$$
where $\hat\rho_\varepsilon^z(y) dy$ is the uniform measure (of total mass one)  localized on the semicircle
$\partial B_\varepsilon(z)\cap D$.
Let us introduce the function $\hat \xi_\varepsilon^z(y)$, for $y\in D$, such that
\begin{eqnarray}\label{c1}
-\Delta\hat \xi_\varepsilon^z&=&2\pi \left(\hat\rho_\varepsilon^z-{1}/{|D|}\right),\\
\label{c3}
n\cdot\nabla\hat\xi_\varepsilon^z|_{\partial D}=0,&&
\int_D \hat \xi_\varepsilon^z dy =0,
\end{eqnarray}
with $n$ the current normal to $\partial D$, and  $|D|:=\int_D dy$ the area of $D$. Hence,
$\hat \xi_\varepsilon^z$ satisfies \textit{Neumann} boundary conditions and has zero mean, and integration by
parts shows  that
$$
h_\varepsilon(z)=\big(h,\hat \xi_\varepsilon^z\big)_\nabla.
$$
Let us introduce the auxiliary function
\begin{eqnarray}\label{zetaeps}
\zeta_\varepsilon^z(y):=-2\log (|y-z|\vee \varepsilon)+\frac{\pi}{2|D|}\left(|y-z|^2+\varepsilon^2 \right),
\end{eqnarray}
such that $-\Delta\zeta_\varepsilon^z(\cdot)=2\pi \left(\hat\rho_\varepsilon^z(\cdot)-{1}/{|D|}\right)$.
The $2\log (|\cdot - z|\vee \varepsilon)$ in place of $\log( |
\cdot - z|\vee \varepsilon)$ comes from the fact that $\hat \rho_\varepsilon^z$ is a unit mass measure over half
a circle.}

{The solution  $\hat \xi_\varepsilon^z$ to 
 (\ref{c1}) and (\ref{c3}) is then given by
\begin{eqnarray}\label{xizetaG}
\hat \xi_\varepsilon^z&=&\zeta_\varepsilon^z-\hat G_z,
\end{eqnarray}
where $\hat G_z$ is the {harmonic} function in $D$, solution to the Neumann problem \eqref{c3} on $\partial D$.
Note that the function $\zeta_\varepsilon^z$  \eqref{zetaeps} has been chosen such that both the boundary normal derivative $n\cdot\nabla\zeta_\varepsilon^z|_{\partial D}$ and the integral $\int_D\zeta_\varepsilon^z dy$ are actually \textit{independent} of  $\varepsilon$ for $\varepsilon \leq \varepsilon_0$.
The normal derivative vanishes on the linear boundary component $\underline{\partial D}$:  $n\cdot\nabla\zeta_\varepsilon^z|_{\underline{\partial D}}=0$.
Therefore $\hat G_z$ is independent of $\varepsilon$,
and  
satisfies the
Neumann condition on $\underline{\partial D}$:
$n\cdot\nabla\hat G_z|_{\underline{\partial D}}=0.$
By the Schwarz reflection principle, this allows extending
$\hat G_z$ to a harmonic function in the domain $\bar D$, complex conjugate and symmetrical of $D$ with respect to
$\underline{\partial D} \subset \mathbb R$,
through  $\hat G_z(\bar y)=\hat G_z(y)$.}

{When considering the reference radius $\varepsilon_0$, we then have that
\begin{eqnarray}\label{hath-h0}
h_{\varepsilon}(z) - h_{\varepsilon_0}(z)=\big(h,\hat\xi_\varepsilon^z-\hat\xi_{\varepsilon_0}^z\big)_\nabla
=\big(h,\zeta_\varepsilon^z-\zeta_{\varepsilon_0}^z\big)_\nabla .
\end{eqnarray}
Thus $h_{\varepsilon}(z) - h_{\varepsilon_0}(z)$ is equal to
$(h,\hat\zeta)_\nabla$, where $\hat \zeta:=\zeta_\varepsilon^z-\zeta_{\varepsilon_0}^z$ (up to a constant) is
 the continuous function
to $-2\log |\cdot - z |$ on the half-annulus $\H \cap
\{y: \varepsilon \leq |y-z| \leq \varepsilon_0 \}$ and is constant outside
of the half-annulus. The variance of $h_{\varepsilon}(z) -
h_{\varepsilon_0}(z)$ is then given by the Dirichlet energy
$(\hat\zeta,\hat\zeta)_\nabla=- 2\log (\varepsilon/\varepsilon_0)$. We thus have that the Gaussian random variable
$h_{\varepsilon}(z) - h_{\varepsilon_0}(z)$  is a standard Brownian
motion $\mathcal B_{2t}$ in time $2t = - 2\log (\varepsilon/\varepsilon_0)$, with boundary condition $\mathcal B_0=0$, as in
Proposition \ref{p.browniancircleaverages}.}


{Thanks to equations (\ref{c1}) to (\ref{xizetaG}),
the set of functions $\hat \xi_\varepsilon^z$
has Dirichlet inner products
\begin{eqnarray}\label{hatxixi}
\big(\hat \xi_\varepsilon^z,\hat \xi_{\varepsilon'}^z\big)_\nabla=-2\log (\varepsilon\vee \varepsilon')
+\frac{\pi}{2|D|}\big(\varepsilon^2+\varepsilon'^2\big)-\hat G_z(z);
\end{eqnarray}
one finds in particular for $\varepsilon'=0$  that $ \big(\hat \xi_\varepsilon^z,\hat \xi_{0}^z\big)_\nabla=\hat \xi_\varepsilon^z(z)$.
At a boundary point $z\in \underline{\partial D}$, the variance of $h_\varepsilon(z)$ is
\begin{eqnarray}\label{Varhat}
\Var\, h_\varepsilon(z)=
\big(\hat \xi_\varepsilon^z,\hat \xi_{\varepsilon}^z\big)_\nabla=-2\log \varepsilon
+\frac{\pi}{|D|}\varepsilon^2-\hat G_z(z);
\end{eqnarray}
this variance thus scales for $\varepsilon$ small like $-2\log
\varepsilon$ instead of $-\log \varepsilon$, because of the free boundary conditions on $\underline{\partial D}$.}
{\subsection{Mixed boundary conditions}
Notice that one can also consider other types of boundary conditions for the Gaussian free field $h$, like
\textit{mixed} boundary conditions in domain $D$, with \textit{free} boundary conditions on a  linear component $\underline{\partial D}  \subset \mathbb R$, and \textit{Dirichlet} ones on its complement
 $\partial D\setminus \underline{\partial D}$. In this case, one uses a reflection principle and considers
    the whole domain $D^{\dagger}:=D\cup\bar D$, where $\bar D$ is the complex conjugate of $D$, symmetrical of $D$ with respect to the real axis, and takes Dirichlet boundary conditions on $\partial D^{\dagger}$. The Hilbert space closure $H(D^{\dagger})$ of the space of $C^\infty$
 real-valued  functions compactly supported on $D^{\dagger}$ can be written as the direct sum $H_{\rm e}(D^{\dagger})\oplus H_{\rm o}(D^{\dagger})$ of the Hilbert space closures corresponding to \textit{even} and \textit{odd} functions on $D^{\dagger}$ with respect to the real line supporting $\underline{\partial D}$. The Gaussian free field $h$ in $D$, with mixed boundary conditions on $\partial D$, is then simply obtained by projecting the GFF in $D^{\dagger}$ onto the even space $H_{\rm e}(D^{\dagger})$, and restricting the result to $D$.}

{It is not hard to see that the semi-circle average $h_\varepsilon(z)$ of $h$, for $z\in \underline{\partial D}$ and $\varepsilon\leq \varepsilon_0$, is then given by $\big(h,\tilde\xi_\varepsilon^z\big)_\nabla$, where $\tilde \xi_\varepsilon^z(y)=-2\log \big(|y-z|\vee \varepsilon\big)-\tilde G_z(y)$, with now
  $\tilde G_z(y)$  the harmonic extension to $D^{\dagger}$ (here restricted to $y\in D$) of the restriction of the function $-2\log |y-z|$  to $y\in \partial D^{\dagger}$.  These functions have Dirichlet inner products
  \begin{eqnarray}\label{tildexixi}
\big(\tilde \xi_\varepsilon^z,\tilde \xi_{\varepsilon'}^z\big)_\nabla=-2\log (\varepsilon\vee \varepsilon')
-\tilde G_z(z),
\end{eqnarray}
in place of \eqref{hatxixi}. Similarly, at a boundary point $z\in \underline{\partial D}$, the variance of $h_\varepsilon(z)$ is
\begin{eqnarray}\label{Vartilde}
\Var\, h_\varepsilon(z)=
\big(\tilde \xi_\varepsilon^z,\tilde \xi_{\varepsilon}^z\big)_\nabla=-2\log \varepsilon
-\tilde G_z(z),
\end{eqnarray}
instead of \eqref{Varhat}.
Lastly, exactly as in the case of free boundary conditions, the Gaussian random variable $h_\varepsilon(z)-h_{\varepsilon_0}(z)$ has variance $-2\log (\varepsilon/\varepsilon_0)$, and is a standard Brownian
motion $\mathcal B_{2t}$ in time $2t = - 2\log (\varepsilon/\varepsilon_0)$, with initial value $\mathcal B_0=0$.}

{In the following section, we shall consider equally well free or mixed boundary conditions, up to some minor differences that are mentioned in each case.}

\subsection{Boundary measure and KPZ}
We define the boundary
measure $\mu^B_\varepsilon := \varepsilon^{\gamma^2/4}e^{\gamma
h_\varepsilon(z)/2}dz$, where in this case $dz$ is Lebesgue measure
on the boundary component $\underline{\partial D}$.  Here we use $e^{\gamma h_\varepsilon(z)/2}$
instead of $e^{\gamma h_\varepsilon(z)}$ because we are integrating
a length instead of an area; as before, the power of $\varepsilon$
that we chose makes the expectation of the factor preceding $dz$, $\varepsilon^{\gamma^2/4} \mathbb E e^{\gamma
h_\varepsilon(z)/2}=\varepsilon^{\gamma^2/4}e^{\gamma^2
\Var \,h_\varepsilon(z)/8}$, have a finite limit when $\varepsilon \to 0$.

We define $\mu^B$ to be the weak limit as $\varepsilon \to 0$ of the
measures $\mu^B_\varepsilon$ (see the theorem below for existence of
this limit when $0 \leq \gamma < 2$). For $z\in
\underline{\partial D}$ we write $\hat B_\varepsilon(z) := B_\varepsilon(z) \cap
\underline{\partial D}$ and we define $\hat B^\delta(z)$ to be the (largest) set $\hat
B_\varepsilon(z)$ whose $\mu^B$ measure is $\delta$.

Likewise define $$\hat B_\varepsilon (X) = \{z \in
\underline{\partial D}: \hat B_\varepsilon (z) \cap X \not = \emptyset \}$$ and
$$\hat B^\delta(X) = \{z \in \underline{\partial D}: \hat B^\delta(z) \cap X \not = \emptyset \}.$$
We say that a (deterministic or random) fractal subset $X$ of the
boundary  component $\underline{\partial D}$ has {\bf Euclidean expectation dimension} $1-\tilde
x$ and {\bf Euclidean scaling exponent $\tilde x$} in the boundary
sense if the expected measure of $\hat B_\varepsilon(X)$ decays like
$\varepsilon^{\tilde x}$, i.e.,
$$\lim_{\varepsilon \to 0} \frac{ \log \mathbb E \mu_0(\hat B_\varepsilon(X)) }{\log \varepsilon} = \tilde  x.$$
We say that $X$ has {\bf boundary quantum scaling exponent $\tilde
\Delta$} if when $X$ and $\mu^B$ (as defined above) are chosen
independently we have
$$\lim_{\delta \to 0} \frac{ \log \mathbb E \mu^B(\hat B^\delta(X)) }{\log \delta} = \tilde \Delta.$$
\begin{theorem}
Given the assumptions above, Proposition \ref{p.hepsilonlimit} and
Theorems \ref{t.verystrongquantumKPZ} and \ref{t.expectationquantumKPZ} hold, precisely as stated,
when $\mu_\varepsilon$ is replaced by $\mu^B_\varepsilon$, $\mu$ is replaced by $\mu^B$; $\mu_0$
(Lebesgue measure on $D$) is replaced by Lebesgue measure on
 one of the
boundary line segments $\underline{\partial D}$ of $D$; $B_\varepsilon$ and $B^\delta$ are replaced with $\hat
B_\varepsilon$ and $\hat B^\delta$, respectively; and the compact
subset of $D$ is replaced with a closed subinterval of $\underline{\partial D}$.
\end{theorem}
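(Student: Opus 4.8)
\medbreak\noindent\textbf{Proof (plan).}  The plan is to re-run, in order, the proofs of Proposition~\ref{p.hepsilonlimit}, of the rooted-measure construction of Section~\ref{s.rootedmetrics}, of Lemmas~\ref{l.totalmass}--\ref{l.totalmass3}, and of Theorems~\ref{t.expectationquantumKPZ} and~\ref{t.verystrongquantumKPZ}, tracking the three genuine changes: $h$ has free (or mixed) boundary conditions; the variance of the semicircle average $h_\varepsilon(z)$ scales like $-2\log\varepsilon$ rather than $-\log\varepsilon$; and the measure is built from $e^{\gamma h_\varepsilon(z)/2}$ with normalizing power $\varepsilon^{\gamma^2/4}$, on the one-dimensional set $\underline{\partial D}$.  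The free-boundary case is treated directly; in the mixed case one first uses the Schwarz reflection across $\underline{\partial D}$ described above to reduce to a zero-boundary GFF on the doubled domain.  Throughout, for $z$ in the chosen closed subinterval of $\underline{\partial D}$ we fix $\varepsilon_0$ small enough that each semidisc $B_{\varepsilon_0}(z)\cap D$ lies inside the linear piece, which removes all corner effects.

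\medbreak\noindent\emph{Existence of $\mu^B$.}  As in the proof of Proposition~\ref{p.hepsilonlimit} it suffices to show, for each dyadic subinterval $I\subset\underline{\partial D}$, that $\mathbb E|\mu^B_{2^{-k}}(I)-\mu^B_{2^{-k-1}}(I)|$ decays geometrically in $k$.  Writing $\overline h^B_\varepsilon:=\tfrac\gamma2 h_\varepsilon+\tfrac{\gamma^2}{4}\log\varepsilon$, the conditional independence, given the scale-$2^{-k-1}$ averages over the $\asymp 2^{k}$ relevant points of $I$, of the scale-$2^{-k-2}$ averages (each then Gaussian of variance $2\log 2$ about the former) comes from the Markov property of the reflected GFF and the Brownian structure $h_{\varepsilon_0 e^{-t}}(z)-h_{\varepsilon_0}(z)=\mathcal B_{2t}$ recorded above.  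The second-moment estimate becomes $\mathbb E[(\varepsilon\,e^{\overline h^B_\varepsilon(z)})^2]\asymp\varepsilon^{2}\varepsilon^{\gamma^2/2}\,\mathbb E e^{\gamma h_\varepsilon(z)}\asymp\varepsilon^{2-\gamma^2/2}$, and summing over the $\asymp\varepsilon^{-1}$ points of $I$ yields $\asymp\varepsilon^{1-\gamma^2/2}$, which decays for $\gamma^2<2$; for $2\le\gamma^2<4$ the same truncation at thickness $\alpha\in(\gamma,2\gamma)$, i.e.\ at $h_\varepsilon(z)>-\alpha\log\varepsilon$, produces an exponent tending to $1-\tfrac{\gamma^2}{4}$ as $\alpha\downarrow\gamma$, which is positive precisely when $\gamma<2$.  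The boundary analogues of Propositions~\ref{p.hnlimit} and~\ref{p.thickpoint}, and of the rooted measure $\Theta^B$ --- for which, given $z\in\underline{\partial D}$, the law of $h$ is a free/mixed-boundary GFF plus the deterministic $\tfrac\gamma2\hat\xi^z_0$ --- follow by the same projection and martingale arguments.

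\medbreak\noindent\emph{Tail estimates and the exponent relation.}  Put $Q=2/\gamma+\gamma/2$ and $\mu^B_\odot(\hat B_\varepsilon(z)):=c_B\,\varepsilon^{\gamma Q/2}e^{\gamma h_\varepsilon(z)/2}$, where $\gamma Q/2=1+\gamma^2/4$ combines the length $2\varepsilon$ of $\hat B_\varepsilon(z)$ with the normalization $\varepsilon^{\gamma^2/4}$.  The boundary form of Lemma~\ref{l.totalmass2}, namely $\mathbb E[\mu^B(\hat B_\varepsilon(z))\mid h_\varepsilon(z)]\simeq\mu^B_\odot(\hat B_\varepsilon(z))$ with the quadratic-exponential lower tail \eqref{massratio} for $\mu^B/\mu^B_\odot$, follows from \eqref{condexpect} (in its boundary version) and the harmonic-projection estimate of Lemma~\ref{l.totalmass}, whose geometric-subdivision argument is insensitive to the interior/boundary distinction; Lemma~\ref{l.totalmass3} carries over verbatim, with $\tilde B^\delta(z)$ now an interval and $B^\delta(z)$ replaced by $\hat B^\delta(z)$.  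Now set $V_t:=h_{\varepsilon_0 e^{-t}}(z)-h_{\varepsilon_0}(z)$, so that under $\Theta^B$ given $z$ one has $V_t-\gamma t=\mathcal B_{2t}=\sqrt2\,W_t$ with $W$ standard; define $\tilde B^\delta_B(z)$ (cf.\ Definition~\ref{Btilde}) to be the largest interval $\hat B_\varepsilon(z)$ on which $e^{\frac\gamma2 V_t-\frac{\gamma Q}{2}t}=\delta$, i.e.\ of radius $\varepsilon_0 e^{-\tilde T}$ with $\tilde T:=\inf\{t:-V_t+Qt=2A\}$, $A:=-(\log\delta)/\gamma$.  Writing $-V_t+Qt=-\sqrt2\,W_t+at$ with $a=Q-\gamma>0$, applying the exponential martingale $\exp(-\beta W_t-\beta^2t/2)$ at $\tilde T$, and substituting $\beta=\sqrt2\,\beta'$, gives $\mathbb E[\exp(-(\beta'a+(\beta')^2)\tilde T)]=e^{-2\beta'A}=\delta^{2\beta'/\gamma}$.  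Since the Euclidean hypothesis makes the $\mathcal X$-coin fire, given the radius $\varepsilon_0 e^{-\tilde T}$, with probability $\asymp e^{-\tilde x\tilde T}$ --- a single power of $\varepsilon$, matching the one-dimensional volume --- setting $\tilde x:=\beta'a+(\beta')^2$ and $\tilde\Delta:=2\beta'/\gamma$ gives the boundary quantum exponent $\tilde\Delta$ and, exactly as in Section~\ref{expmart}, the relation $\tilde x=\tfrac{\gamma^2}{4}\tilde\Delta^2+(1-\tfrac{\gamma^2}{4})\tilde\Delta$; the Schilder--Varadhan proof of Theorem~\ref{t.expectationquantumKPZ} adapts in the same way.

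\medbreak\noindent\emph{Conclusion and main difficulty.}  The passage from the statement for $\tilde B^\delta_B$ to the full theorem for $\hat B^\delta$ repeats the proof of Theorem~\ref{t.verystrongquantumKPZ} essentially verbatim, using the boundary forms of Lemmas~\ref{l.totalmass2} and~\ref{l.totalmass3} and replacing every $\varepsilon^2$-area bound by the corresponding $\varepsilon^1$-length bound.  No new idea enters: the $\sqrt2$ produced by the doubled variance is absorbed into $\beta\mapsto\sqrt2\,\beta'$ and cancels against the halving of the exponent in $e^{\gamma h/2}$ together with the change from $\varepsilon^2$ to $\varepsilon^1$ in the volume, so the KPZ quadratic emerges identical.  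The one point that will genuinely need checking is that this doubled-variance scaling does not move the critical threshold in the existence argument --- the computation above gives the exponent $1-\gamma^2/4$, still positive exactly when $\gamma<2$ --- together with the legitimacy of the reflection reduction for \emph{mixed} boundary conditions near the endpoints of the chosen subinterval, which is precisely why we require $\varepsilon_0$ small. \qed
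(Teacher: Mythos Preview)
Your proposal is correct and follows essentially the same route as the paper: the authors also state that ``the proofs in the boundary case proceed exactly the same as in the interior point case, up to factors of $2$,'' and then sketch only the analogue of Theorem~\ref{t.expectationquantumKPZ} via the exponential martingale applied to $\mathcal B_{2t}$ at the stopping time $T_A=\inf\{t:\mathcal B_{2t}+at=2A\}$, yielding $\mathbb E\exp[-(\beta a/2+\beta^2/4)T_A]=e^{-\beta A}$ with $\tilde\Delta=\beta/\gamma$. Your parameterization $\mathcal B_{2t}=\sqrt2\,W_t$, $\beta=\sqrt2\,\beta'$, $\tilde\Delta=2\beta'/\gamma$ is algebraically equivalent, and your more explicit tracking of the existence exponents ($\varepsilon^{1-\gamma^2/2}$ from the second moment, $1-\gamma^2/4$ after truncation) simply fills in details the paper leaves implicit.
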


\begin{proof}
The proofs in the boundary case proceed exactly the same as in the
interior point case, up to factors of $2$ in various places. We
sketch the proof of an analog of Theorem
\ref{t.expectationquantumKPZ} in order to indicate where those
factors of $2$ appear.

Write $t := - \log (\varepsilon/\varepsilon_0)$, and let $V_t :=
h_\varepsilon(z)-h_{\varepsilon_0}(z)$. It is not hard to see that
the expectation of the boundary line integral
$$\mathbb E_h \left[\int_{\hat B_\varepsilon(z)} e^{\gamma h/2}dy |
V_t\right]=\mathbb E_h \left[\mu^B_h({\hat B_\varepsilon(z)})|h_\varepsilon(z)-h_{\varepsilon_0}(z)\right]
$$ has approximately the form (which replaces (\ref{muBepsbis}) and (\ref{mu1BepsilonVt}))
\begin{equation} \label{ExpectedQuantumAreabis}
\exp\left(\frac{\gamma}{2} V_t- \frac{\gamma}{2} Q t\right),
\end{equation}
in the sense that the ratio of the logarithms of the two quantities tends
to $1$ when $\varepsilon \to 0$ and $t \to \infty$.  Let ${\tilde B}^\delta(z)$ now be the largest Euclidean
ball $B_{\varepsilon}(z)$ in $D$ centered at $z\in
\underline{\partial D}$ for which (\ref{ExpectedQuantumAreabis}) is equal to the
quantum length $\delta$, and ${\tilde B}^\delta(X) := \{z \in \underline{\partial D} : \tilde B^\delta(z) \cap X \not = \emptyset \}.$
\\

{As before, we use the fact that $\mathbb E_h \mu_h^B\big({\tilde B}^\delta(X)\big)$ is proportional to $\hat\Theta\{(z,h): z \in \underline{\partial D} ,  \tilde B^\delta(z) \cap X \not = \emptyset\}$, where $\hat \Theta$ is the  boundary rooted measure such that, given {$z \in \underline{\partial D}$},  $h$ is sampled from the Gaussian free field distribution
\textit{weighted} by $e^{\gamma h(z)/2}$.  For \textit{free} boundary conditions, the $\hat \Theta$ conditional law of $h$ is then that of
the original GFF \textit{plus} the deterministic function $\frac{\gamma}{2} \hat \xi_0^z(\cdot)=-\gamma\log |\cdot-z|+\frac{\gamma\pi}{4|D|}|\cdot-z|^2$.
Then given $z\in \underline{\partial D}$, the $\hat \Theta$ conditional law of
the semi-circular average $h_\varepsilon(z)=\big(h,\hat \xi_\varepsilon^z\big)_\nabla$ is
that of the original GFF semi-circular average,  plus the Dirichlet inner product
$\frac{\gamma}{2}\big(\hat \xi_0^z,\hat \xi_\varepsilon^z\big)_\nabla=\frac{\gamma}{2}\hat \xi_\varepsilon^z(z)$.}

{Then given $z\in \underline{\partial D}$, the $\hat\Theta$ conditional law of
$ V_t=h_\varepsilon(z)-h_{\varepsilon_0}(z)$
is that of
$${\mathcal B}_{2t} + \frac{\gamma}{2}\big(\hat \xi_\varepsilon^z(z)-\hat \xi_{\varepsilon_0}^z(z)\big)
={\mathcal B}_{2t} -\gamma\log(\varepsilon/\varepsilon_0)+b_{\varepsilon}-b_{\varepsilon_0}={\mathcal B}_{2t} +\gamma t+b_{\varepsilon_0e^{-t}}-b_{\varepsilon_0},$$
 where $b_{\varepsilon}:=\frac{\gamma}{2}\frac{\pi\varepsilon^2}{2|D|}$; thus  $V_t$ evolves \textit{independently} of $z$, as a Brownian motion ${\mathcal B}_{2t}$ with {twice} the variance of standard Brownian motion, because of the free boundary conditions on
$\underline{\partial D}$, plus a drift term $\gamma t$, and up to a constant and an exponentially small term when $t\to \infty$.}

{In the case of \textit{mixed} boundary conditions, the same line of arguments (recall \eqref{tildexixi}) shows that the $\hat\Theta$ conditional law of
$ V_t=h_\varepsilon(z)-h_{\varepsilon_0}(z)$
is simply that of
$${\mathcal B}_{2t} + \frac{\gamma}{2}\big(\tilde \xi_\varepsilon^z(z)-\tilde \xi_{\varepsilon_0}^z(z)\big)
={\mathcal B}_{2t} +\gamma t.$$}

Using (\ref{ExpectedQuantumAreabis}), we have {both for free and mixed boundary conditions}
\begin{eqnarray}
\label{exphBis} \mathbb E \left[\int_{\hat B_\varepsilon(z)}
e^{\gamma h/2}dy |V_t\right] \asymp
\exp\left(\frac{\gamma}{2} {\mathcal B}_{2t} +\frac{1}{2} \gamma^2 t
- \frac{\gamma}{2} Q t\right).
\end{eqnarray}
This will be equal to the quantum boundary length $\delta$ at the
smallest $t$ for which
$\gamma {\mathcal B}_{2t} +\gamma^2 t  -
\gamma Q t = 2\log \delta$, with ${\mathcal B}_{0}=0$.
If we set $A := -(\log \delta) /
\gamma$, this smallest time is a stopping time $T_A$
such that
\begin{equation}
\label{stoppingtimebis} T_A=\inf\{t: {\mathcal B}_{2t} +
at=2A=-2(\log \delta)/\gamma\} ,\,\, a=Q-\gamma
=\frac{2}{\gamma}-\frac{\gamma}{2}> 0.
\end{equation}
As above, we consider the two part experiment in which we first
sample $T_A$ and then sample $z$ and check to see whether the ball
of radius $\varepsilon=\varepsilon_0\,  e^{-T_A}$ intersects $X$ on the boundary.
Given $T_A$, the ratio of the logarithms of this probability and
$$\mathbb E \left[\exp{(-\tilde xT_A)}\right]$$ tends to $1$ as $A \to
\infty$.

Consider next for any $\beta$ the exponential martingale
$\exp\left( \frac{\beta}{2} {\mathcal B}_{2t} - \frac{\beta^2}{4} t
\right)$, such that
$$\mathbb E \left[\exp\left( \frac{\beta}{2} {\mathcal B}_{2t} - \frac{\beta^2}{4} t\right)\right]
=\mathbb E \left[\exp\left( \frac{\beta}{2} {\mathcal
B}_0\right)\right]=1.$$
{As before, for $\beta \geq 0$, the martingale stays bounded from above by a fixed constant for $t\in [0,T_A]$
with $T_A<\infty$ a.s.}
One thus applies this exponential martingale at the stopping time $T_A$:
$$\mathbb E \left[\exp\left( \frac{\beta}{2} {\mathcal B}_{2T_A} - \frac{\beta^2}{4} T_A\right)\right]
=1.$$ By definition ${\mathcal B}_{2T_A}=2A-aT_A$. One thus gets the
identity
$${\mathbb E} \exp[- (\beta a/2+\beta^2/4) T_A]=\exp(-\beta A),$$
and it now suffices to identify $2\tilde x:=\beta a+\beta^2/2$, with $\beta \geq 0$, to
obtain the boundary KPZ with $\tilde \Delta:=\beta/\gamma$, and
$${\mathbb E}  \exp(- \tilde x T_A)=\delta^{\tilde \Delta}
=\exp(-\beta A)=\exp\left\{- A[ (a^2+4\tilde x)^{1/2} - a
]\right\},$$
in complete analogy to \eqref{e.expmartingalekpz}. \qed\end{proof}

The reader may observe that the boundary measures described above
 are preserved under the transformations described in
Proposition \ref{Qtransformation}.  One can use this to define the
boundary measure on more general domains, which may not have
piecewise linear boundary conditions.

We also remark that a similar procedure to that above allows us to
make sense of measure restricted to lines in the interior of the
domain.

\section{Discrete random surface dimensions and heuristics}
\label{discreteheuristicoverview}

Historically, one of the uses of the KPZ formula has been to make
heuristic predictions about the scaling exponents of random fractal
subsets of the plane (see, e.g., \cite{MR1723364, MR1749396,
MR2112128, 2000PhRvL..84.1363D, 2006math.ph...8053D}, and the
references surveyed therein for much more detail).

In this subsection, we give a very rough and very brief sketch of
what such a heuristic might entail in a simple example.  Readers
familiar with discrete quantum gravity models (a.k.a.\ random planar
map models, random quadrangulation models, etc.) should note that
these models have natural interpretations as continuum random metric
spaces as well. For example, a random planar quadrangulation $M_n$
on the sphere
--- chosen uniformly from the set of all simply connected planar
quadrangulations with $n$ quadrilaterals
--- can be viewed as a manifold by endowing each
quadrilateral with the metric of a unit square. (Of course, the
resulting manifold will have singularities: negative curvature point
masses at vertices where more than four unit squares coincide and
positive curvature point masses at vertices where fewer than four
unit squares coincide.)  We may then choose a uniform square from
among this set.  Taking an ``infinite volume limit'' (as $n \to
\infty$) one obtains an infinite random quadrangulation $M_\infty$
with a distinguished square.  (See, e.g., \cite{MR2013797} for a
precise description of this construction for triangulations.) This
infinite random surface can be conformally mapped to the plane in
such a way that the center of the distinguished square is mapped to
the origin and the volume of the image of the distinguished square
is a constant $\delta$ (with a rotation chosen uniformly at random).
The images of the unit squares of $M_\infty$ form a tiling of $\C$
by ``conformally distorted'' unit squares. Different squares have
different sizes with respect to the Euclidean metric on the plane;
intuitively, one would expect such a tiling to look something
vaguely like the tilings in Figures \ref{f.QG1}, \ref{f.QG2}, and
\ref{f.QG3} except that the ``squares'' would be randomly oriented
and distorted.  The pullback of the intrinsic metric of $M_\infty$
to the plane via this map takes the form $e^{\lambda}(dx^2 + dy^2)$
for some function random $\lambda$ (which has logarithmic
singularities at the images of the vertices of the squares).
Although the equivalence of Liouville quantum gravity and discrete
quantum gravity is taken as an Ansatz throughout much of the
literature, to our knowledge the following is the first precise
conjecture for the complete scaling limit of a discrete quantum
gravity model:

\begin{conjecture} \label{c.scalingconjecture}
As $\delta \to 0$, the function $\lambda$ converges in law (e.g.,
w.r.t. to the weak topology on the space of distributions on the
plane modulo additive constants) to $\gamma (h(\cdot) - \gamma \log|
\cdot|)$ where $h$ is an instance of the whole plane Gaussian free
field (defined up to additive constants) and $\gamma^2 = \kappa =
8/3$.
\end{conjecture}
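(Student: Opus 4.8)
The plan is to read Conjecture \ref{c.scalingconjecture} as the assertion that the conformally embedded infinite quadrangulation $M_\infty$, rooted at a uniformly chosen square whose image has Euclidean area $\delta$ and center $0$, converges as $\delta\to0$ to the rooted $\sqrt{8/3}$-Liouville surface $\Theta$ of Section \ref{s.rootedmetrics} (rooted at the origin); equivalently, that the random area measure $e^{\lambda}\,dz$ converges weakly to $e^{\gamma h(z)}|z|^{-\gamma^2}\,dz$, which is exactly the origin-rooted measure under $\Theta$ (with $h$ the whole-plane GFF, $\gamma^2=8/3$). A natural route has three stages: (i) make the discrete conformal embedding, and hence $\lambda$, into a well-defined random distribution modulo additive constants and prove tightness of its law; (ii) show that every subsequential limit is a centered Gaussian field plus a deterministic, radially symmetric (about $0$) drift; (iii) identify the Gaussian part as the whole-plane GFF and pin down $\gamma^{2}=8/3$.

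For stage (i), one uses that the piecewise-flat metric on $M_\infty$ (flat with conical singularities at vertices) carries a canonical conformal structure, realized concretely by a circle-packing / discrete-uniformization embedding into $\C$, normalized so that the marked square maps to a region of area $\delta$ centered at $0$ with a uniformly random rotation. The field $\lambda$ is then a sum of logarithmic singularities at the vertex images plus a harmonic correction, and what must be controlled, uniformly in $\delta$, is (a) that no embedded square is conformally distorted into a degenerate sliver, i.e.\ bounded eccentricity with overwhelming probability, and (b) that the pushed-forward vertex-counting measure has tight moments at each Euclidean scale. These should be extractable by combining the labeled-tree (mobile) encoding of $M_\infty$ — equivalently the continuum Brownian-map description of Le Gall and Miermont — with quantitative discrete-conformal-mapping estimates of Rodin--Sullivan type, yielding tightness of the law of $\lambda$ in the space of distributions modulo constants.

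For stages (ii) and (iii), the key structural inputs are the \textbf{spatial Markov property} of the quadrangulation (its peeling process), which under the embedding should translate into a domain-Markov property for $\lambda$ matching that of the GFF (conditioned on $\lambda$ outside an embedded subdomain, the field inside is, up to a harmonic correction, an independent copy of the same object on a subsurface), together with the exact scale- and conformal-covariance coming from Proposition \ref{Qtransformation} and the re-rooting invariance of $M_\infty$ (the marked square is "typical," so the law is covariant under $z\mapsto cz$ about $0$). A characterization theorem for the GFF should then force the limiting $\lambda$ minus its deterministic radial drift to be a multiple of the whole-plane GFF and the drift to be of the form $a\log|\cdot|$; the multiplier and $a$ are finally read off from a single scaling exponent — e.g.\ the volume-growth exponent $4$ of the Brownian map, equivalently central charge $c=0$ — fed through the KPZ relation \eqref{KPZ3} of Theorem \ref{t.strongquantumKPZ}, which yields $\gamma=\sqrt{8/3}$ and $a=-\gamma^{2}$; the marked point itself then sits at a $\gamma$-thick point, the $\Delta=0$ instance of Proposition \ref{p.thickpoint}.

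The main obstacle is stage (i), and within it the \textbf{conformal embedding} itself: $M_\infty$ is presently understood only through its intrinsic (Gromov--Hausdorff) geometry and its tree encoding, with no known handle on how this structure interacts with the planar conformal structure. Proving a uniform-in-$\delta$ nondegeneracy statement for discrete conformal maps of random quadrangulations — that the embedded squares neither collapse nor explode — is the crux; without it one cannot establish tightness of $\lambda$, let alone identify its limit. By contrast, once the embedding is under control, the Markov and covariance inputs of stage (ii) and the exponent-matching of stage (iii) are comparatively soft, the latter being essentially a bookkeeping application of the KPZ formula already proved above.
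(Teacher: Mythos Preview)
The statement you are attempting to prove is labeled a \emph{Conjecture} in the paper, and the paper offers no proof of it whatsoever. It appears in Section~\ref{discreteheuristicoverview} precisely as an open problem motivating the heuristic link between discrete quantum gravity and the continuum Liouville measure; the authors only remark that related ``work in progress'' would appear elsewhere. There is therefore no paper proof to compare your proposal against.

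Your write-up is best read not as a proof but as a research program, and indeed you say as much yourself: you identify the decisive obstacle as stage~(i), the uniform control of the discrete conformal embedding of $M_\infty$, and you note explicitly that ``no known handle'' connects the tree/Brownian-map description to the planar conformal structure. That diagnosis is accurate, and it is exactly why the statement is a conjecture. Stages~(ii) and~(iii) are also more delicate than you suggest: a rigorous characterization theorem that pins down the GFF from a domain-Markov plus scale-covariance property (in the form you would need here) is itself a nontrivial input, and the exponent-matching via KPZ presupposes that the discrete and continuum scaling exponents agree, which is part of what is being conjectured rather than something available a priori. So while your outline captures the shape such an argument would have to take, none of its load-bearing steps is currently a theorem, and the proposal does not constitute a proof.
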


We further conjecture that other values of $\gamma$ are obtained by
choosing a random quadrangulation together with a statistical
physical model on the quadrangulation (FK cluster model,
percolation, $O(N)$ model, uniform spanning tree); in this case, the
probability of a given quadrangulation is proportional to the
partition function of the statistical physics model on that
quadrangulation. (See the references on random matrix theory and
geometrical models cited in the introduction for much more detail;
see \cite{2006math.ph...8053D} for a review with additional
references.) One can also consider scaling limits on spheres or
higher genus surfaces, as well as different kinds of marked points
(corresponding to different logarithmic singularities in the scaling
limit); however, these are a bit more complicated to describe, so we
limit attention to the infinite volume case for now.

By the usual conformal invariance Ansatz, it is natural to expect
that if one conditions on the infinite quadrangulation, and then
samples the loops or trees in these models (as mapped into the
plane), their law (in the scaling limit) will be {\em independent}
of the metric.

Now suppose that for each $n$ we define a random subset $X_n$ of
$M_n$ (for example, $X_n$ could be the set of the squares hit by a
simple random walk started at the root square and stopped the first
time that the walk hits a square on the boundary of the
quadrangulation).  Then one can define a discrete scaling exponent
(analogous to the box counting exponent in (\ref{e.qse}), with
$\delta$ replaced by $n^{-1}$) as follows:
$$\Delta_D = \lim_{n \to \infty} \frac {\log\mathbb E(n^{-1} |X_n|)}{\log n^{-1}}.$$  Identifying
$X_n$ with its image in a conformal map to, say, $\D$, one might
guess that the random pair $(X_n, \lambda_n)$ --- where $e^{\lambda_n(z)}dz$ is uniform
measure on the discrete surface, mapped to $\D$ --- has a scaling limit
$(X, \lambda)$, where $X$ is a random subset of $\D$ (in our
example, it might be a Brownian motion) and $\lambda$ is some form
of the Gaussian free field.

If this is the case, then on a heuristic level, one would expect
that the quantum scaling exponent of $X$ is $\Delta=\Delta_D$,
since, in the notation of Corollary \ref{c.strongquantumKPZ2}, if we
write $\delta = n^{-1}$, we would expect that $\mathbb E[\delta
N(\mu, \delta, X)]$ scales like $\mathbb E (n^{-1} |X_n|)$.

In discrete quantum gravity models, it is often possible to compute
$\Delta_D$ explicitly (and rigorously) using random matrix
techniques or tree bijections; it is also often possible to compute
$\gamma$ directly using discrete quantum gravity machinery and so
heuristically obtain its value in the continuum limit.

Assuming values for $\Delta_D$ and $\gamma$ --- and assuming $\Delta
= \Delta_D$ --- the KPZ formula gives the Euclidean scaling
dimension of $X$.  In many interesting examples, $X$ is a random
fractal (a Schramm-Loewner evolution, for example, or the outer
boundary of a planar Brownian motion) whose Euclidean scaling
dimension might not be immediately obvious otherwise.

Finally, we mention that, in the standard realm of conformal field
theory, there exists a precise relation between the central charge
$c \leq 1$ of the statistical model coupled to quantum gravity and
the value of Liouville parameter,
$\gamma=\left(\sqrt{25-c}-\sqrt{1-c}\right)/\sqrt 6$,
\cite{MR947880,MR981529,MR1005268,1990PThPS.102..319S,Ginsparg-Moore},
as well as a corresponding connection between \SLEk/ and
Liouville quantum gravity models with $\gamma = \sqrt{\min\{\kappa,
16/\kappa \}}$.

Our result extends the validity of the KPZ relation outside that CFT
framework to any value of Liouville parameter $\gamma <2$, with the
Ansatz that the fractal set $X$ and the GFF are sampled
independently. A possible interpretation of the KPZ relation in that
case would be that it describes the quantum geometry of the given
fractal in the {\it quenched} random surface generated by random
graphs, equilibrated with a conformally invariant system with a
value of  $c$ or $\kappa$ corresponding to the chosen value of
$\gamma$.  For example, one could first choose a random graph
weighted by the critical Ising model partition function; and then
perform a loop erased random walk on that graph, ignoring Ising
clusters.  In this case, one would expect the Euclidean dimension of
the path to be that of \SLEkk2/ (which corresponds to loop erased
random walk), while the value of $\gamma$ describing the metric
would be $\sqrt{3}$ (which corresponds to the critical Ising model),
and one could use KPZ to predict the quantum scaling dimension.

Similar ideas appeared in previous numerical work
\cite{1998cond.mat..4137A,1999PhLB..460..271J}, but the data so far
appear as inconclusive.

Finally, we remark that the original (still accessible) arXiv version of this paper contained an additional section: a three-page sketch of some work in progress, including some results about the conformal welding of quantum random surfaces and about the scaling limits of discrete quantum gravity models.  Many of these results will appear in \cite{Zipper,DS3}.

\bigskip

\bigskip

\bigskip

\bigskip

{\bf Acknowledgments:}  We thank the IAS/Park City Mathematics
Institute, where this work was initiated, the School of Mathematics
of the Insitute for Advanced Study for its gracious hospitality in
successive stays during which this work was done, the ICTP in
Trieste, the \'Ecole de physique des Houches, the Centre de
recherches math\'ematiques de l'Universit\'e de Montr\'eal
and the Center for Theoretical Physics at MIT where
this work could be completed. B.D. wishes especially to thank
Michael and Marta Aizenman, Tom Spencer, Lynne Breslin, and Bob Jaffe for their generous
hospitality in Princeton, New York, and Cambridge. It is also a pleasure to thank Jacques
Franchi for suggesting the exponential martingale argument used in
Section \ref{expmart}.  We also thank Omer Angel, Peter Jones, Greg
Lawler, Andrei Okounkov, and Oded Schramm for stimulating
conversations and email correspondence, and Tom Alberts for comments
on a prior draft of this paper. The authors also thank Yuval Peres and the Theory Group of Microsoft Research for their
hospitality at the occasion of Oded Schramm's Memorial Conference. This research has made use of NASA's Astrophysics Data System.

\bibliographystyle{halpha}
\bibliography{ldpkpzrev}

\newcommand{\etalchar}[1]{$^{#1}$}
\begin{thebibliography}{BKKM86b}

\bibitem[AAMT96]{1996PhLB..388..713A}
J.~{Ambj{\o}rn}, K.~N. {Anagnostopoulos}, U.~{Magnea}, and G.~{Thorleifsson}.
\newblock {Geometrical interpretation of the Knizhnik-Polyakov-Zamolodchikov
  exponents}.
\newblock {\em Phys. Lett. B}, 388:713--719, 1996, arXiv:hep-lat/9606012.

\bibitem[ABC93]{1993NuPhB.394..383A}
L.~{Alvarez-Gaum{\'e}}, J.~L.~F. {Barb{\'o}n}, and {\v C}.~{Crnkovi{\'c}}.
\newblock {A proposal for strings at $D > 1$}.
\newblock {\em Nucl. Phys. B}, 394:383--422, 1993, arXiv:hep-th/9208026.

\bibitem[ABT99]{1998cond.mat..4137A}
K.~{Anagnostopoulos}, P.~{Bialas}, and G.~{Thorleifsson}.
\newblock {The Ising model on a quenched ensemble of c = -5 gravity graphs}.
\newblock {\em J. Stat. Phys.}, 94:321--345, 1999, arXiv:cond-mat/9804137.

\bibitem[ADA99]{1999PhRvL..83.1359A}
M.~{Aizenman}, B.~{Duplantier}, and A.~{Aharony}.
\newblock {Path-crossing exponents and the external perimeter in 2D
  percolation}.
\newblock {\em Phys. Rev. Lett.}, 83:1359--1362, 1999, arXiv:cond-mat/9901018.

\bibitem[ADF85]{1985NuPhB.257..433A}
J.~{Ambj{\o}rn}, B.~{Durhuus}, and J.~{Fr{\"o}hlich}.
\newblock {Diseases of triangulated random surface models, and possible cures}.
\newblock {\em Nucl. Phys. B}, 257:433--449, 1985.

\bibitem[ADJ94]{1994MPLA....9.1221A}
J.~{Ambj{\o}rn}, B.~{Durhuus}, and T.~{Jonsson}.
\newblock {A Solvable 2d Gravity Model with {$\gamma > 0$}}.
\newblock {\em Mod. Phys. Lett. A}, 9:1221--1228, 1994.

\bibitem[ADJ97]{MR1465433}
J.~Ambj{\o}rn, B.~Durhuus, and T.~Jonsson.
\newblock {\em Quantum geometry, a statistical field theory approach}.
\newblock Cambridge Monographs on Mathematical Physics. Cambridge University
  Press, Cambridge, 1997.

\bibitem[AGHK79]{MR553970}
S.~Albeverio, G.~Gallavotti, and R.~H{\o}egh-Krohn.
\newblock Some results for the exponential interaction in two or more
  dimensions.
\newblock {\em Commun. Math. Phys.}, 70(2):187--192, 1979.

\bibitem[AHK74]{MR0356761}
S.~Albeverio and R.~H{\o}egh-Krohn.
\newblock The {W}ightman axioms and the mass gap for strong interactions of
  exponential type in two-dimensional space-time.
\newblock {\em J. Funct. Anal.}, 16:39--82, 1974.

\bibitem[AHKPS92]{MR1168301}
S.~Albeverio, R.~H{\o}egh-Krohn, S.~Paycha, and S.~Scarlatti.
\newblock A global and stochastic analysis approach to bosonic strings and
  associated quantum fields.
\newblock {\em Acta Appl. Math.}, 26(2):103--195, 1992.

\bibitem[AJW95]{MR1360409}
J.~Ambj{\o}rn, J.~Jurkiewicz, and Y.~Watabiki.
\newblock On the fractal structure of two-dimensional quantum gravity.
\newblock {\em Nucl. Phys. B}, 454(1-2):313--342, 1995, arXiv:hep-lat/9507014.

\bibitem[AS03]{MR2013797}
O.~Angel and O.~Schramm.
\newblock Uniform infinite planar triangulations.
\newblock {\em Commun. Math. Phys.}, 241(2-3):191--213, 2003,
  arXiv:math:0207153.

\bibitem[AW95]{MR1338099}
J.~Ambj{\o}rn and Y.~Watabiki.
\newblock Scaling in quantum gravity.
\newblock {\em Nucl. Phys. B}, 445(1):129--142, 1995, arXiv:hep-th/9501049.

\bibitem[Bau90]{bauer1990}
M.~Bauer.
\newblock Aspects de l'invariance conforme.
\newblock {\em PhD Thesis, Universit\'e Paris 7}, 1990.

\bibitem[BB09]{2009arXiv0909.1695B}
O.~{Bernardi} and M.~{Bousquet-M{\'e}lou}.
\newblock {Counting colored planar maps: algebraicity results}.
\newblock 2009, arXiv:0909.1695.

\bibitem[BDFG02]{MR1938319}
J.~Bouttier, P.~Di~Francesco, and E.~Guitter.
\newblock Census of planar maps: from the one-matrix model solution to a
  combinatorial proof.
\newblock {\em Nucl. Phys. B}, 645(3):477--499, 2002.

\bibitem[BDFG03a]{MR1965114}
J.~Bouttier, P.~Di~Francesco, and E.~Guitter.
\newblock Combinatorics of hard particles on planar graphs.
\newblock {\em Nucl. Phys. B}, 655(3):313--341, 2003.

\bibitem[BDFG03b]{MR1987861}
J.~Bouttier, P.~Di~Francesco, and E.~Guitter.
\newblock Geodesic distance in planar graphs.
\newblock {\em Nucl. Phys. B}, 663(3):535--567, 2003.

\bibitem[BDFG07]{MR2369957}
J.~Bouttier, P.~Di~Francesco, and E.~Guitter.
\newblock Blocked edges on {E}ulerian maps and mobiles: application to spanning
  trees, hard particles and the {I}sing model.
\newblock {\em J. Phys. A Math. Theor.}, 40(27):7411--7440, 2007.

\bibitem[BDKS95]{1995NuPhB.440..189B}
J.~L.~F. {Barb{\'o}n}, K.~{Demeterfi}, I.~R. {Klebanov}, and C.~{Schmidhuber}.
\newblock {Correlation functions in matrix models modified by wormhole terms}.
\newblock {\em Nucl. Phys. B}, 440:189--214, 1995, arXiv:hep-th/9501058.

\bibitem[Ber07]{bernardi-2006}
O.~Bernardi.
\newblock Bijective counting of tree-rooted maps and shuffles of parenthesis
  systems.
\newblock {\em Electron. J. Comb.}, 14(1):$\#$R9, 2007, arXiv:math/0601684.

\bibitem[{Ber}08a]{2006math......8057B}
O.~{Bernardi}.
\newblock {A characterization of the Tutte polynomial via combinatorial
  embeddings}.
\newblock {\em Ann. Comb.}, 12(2):139--153, 2008, arXiv:math/0608057.

\bibitem[{Ber}08b]{2006math......1678B}
O.~{Bernardi}.
\newblock {On triangulations with high vertex degree}.
\newblock {\em Ann. Comb.}, 12(1):17--44, 2008, arXiv:math/0601678.

\bibitem[{Ber}08c]{2006math.....12003B}
O.~{Bernardi}.
\newblock {Tutte polynomial, subgraphs, orientations and sandpile model: new
  connections via embeddings}.
\newblock {\em Electron. J. Comb.}, 15(1):$\#$R109, 2008, arXiv:math/0612003.

\bibitem[BFSS01]{flajolet1}
C.~{Banderier}, P.~{Flajolet}, G.~{Schaeffer}, and M.~{Soria}.
\newblock {Random maps, coalescing saddles, singularity analysis, and Airy
  phenomena}.
\newblock {\em Random Struct. $\&$ Algorithms}, 19(3), 2001.

\bibitem[BG08a]{2008JPhA...41n5001B}
J.~{Bouttier} and E.~{Guitter}.
\newblock {Statistics of geodesics in large quadrangulations}.
\newblock {\em J. Phys. A Math. Theor.}, 41:145001, 2008, arXiv:0712.2160.

\bibitem[BG08b]{2008JSMTE..07..020B}
J.~{Bouttier} and E.~{Guitter}.
\newblock {The three-point function of planar quadrangulations}.
\newblock {\em J. Stat. Mech.}, 7:P07020, 2008, arXiv:0805.2355.

\bibitem[BG09]{2009JSMTE..03..001B}
J.~{Bouttier} and E.~{Guitter}.
\newblock {Confluence of geodesic paths and separating loops in large planar
  quadrangulations}.
\newblock {\em J. Stat. Mech.}, page P03001, 2009, arXiv:0811.0509.

\bibitem[BIPZ78]{1978CMaPh..59...35B}
E.~{Br{\'e}zin}, C.~{Itzykson}, G.~{Parisi}, and J.~B. {Zuber}.
\newblock {Planar diagrams}.
\newblock {\em Commun. Math. Phys.}, 59:35--51, 1978.

\bibitem[BKKM86a]{1986NuPhB.275..641B}
D.~V. {Boulatov}, V.~A. {Kazakov}, I.~K. {Kostov}, and A.~A. {Migdal}.
\newblock {Analytical and numerical study of a model of dynamically
  triangulated random surfaces}.
\newblock {\em Nucl. Phys. B}, 275:641--686, 1986.

\bibitem[BKKM86b]{1986PhLB..174...87B}
D.~V. {Boulatov}, V.~A. {Kazakov}, I.~K. {Kostov}, and A.~A. {Migdal}.
\newblock {Possible types of critical behaviour and the mean size of
  dynamically triangulated random surfaces}.
\newblock {\em Phys. Lett. B}, 174:87--93, 1986.

\bibitem[BS00]{HandbookBrownian}
A.~N. Borodin and P.~Salminen.
\newblock {\em Handbook of Brownian motion}.
\newblock Birkh\"auser Verlag, Basel, 2000.
\newblock Second edition, p. 295, formulae 2.0.1 - 2.0.2.

\bibitem[BS03]{2002math.....11070B}
M.~{Bousquet-M\'elou} and G.~{Schaeffer}.
\newblock {The degree distribution in bipartite planar maps: applications to
  the Ising model}.
\newblock In {\em Proceedings of {\em FPSAC\/} 03, (Formal power series and
  algebraic combinatorics), Vadstena, Sweden, June 2003, K. Eriksson and S.
  Linusson eds.}, pages 312--323, 2003, arXiv:math/0211070.

\bibitem[BS09]{benjamini-2008}
I.~{Benjamini} and O.~{Schramm}.
\newblock {KPZ in One Dimensional Random Geometry of Multiplicative Cascades}.
\newblock {\em Commun. Math. Phys}, 289:46--56, 2009, arXiv:0806.1347.

\bibitem[Cha09]{MR2507734}
Guillaume Chapuy.
\newblock Asymptotic enumeration of constellations and related families of maps
  on orientable surfaces.
\newblock {\em Combin. Probab. Comput.}, 18(4):477--516, 2009.

\bibitem[Cha10]{2008arXiv0804.0546C}
Guillaume Chapuy.
\newblock The structure of unicellular maps, and a connection between maps of
  positive genus and planar labelled trees.
\newblock {\em Probab. Theory Related Fields}, 147(3):415--447, 2010.

\bibitem[Che55]{MR0074856}
S.~Chern.
\newblock An elementary proof of the existence of isothermal parameters on a
  surface.
\newblock {\em Proc. Amer. Math. Soc.}, 6:771--782, 1955.

\bibitem[CMS09]{MR2563085}
Guillaume Chapuy, Michel Marcus, and Gilles Schaeffer.
\newblock A bijection for rooted maps on orientable surfaces.
\newblock {\em SIAM J. Discrete Math.}, 23(3):1587--1611, 2009.

\bibitem[{Dau}95]{Daul-1995}
J.-M. {Daul}.
\newblock {$Q$-States Potts model on a random planar lattice}.
\newblock 1995, arXiv:hep-th/9502014.
\newblock Unpublished.

\bibitem[{Dav}85]{1985PhLB..159..303D}
F.~{David}.
\newblock {Randomly triangulated surfaces in - 2 dimensions}.
\newblock {\em Phys. Lett. B}, 159:303--306, 1985.

\bibitem[Dav88]{MR981529}
F.~David.
\newblock Conformal field theories coupled to {$2$}-{D} gravity in the
  conformal gauge.
\newblock {\em Mod. Phys. Lett. A}, 3(17):1651--1656, 1988.

\bibitem[Dav94]{MR1307402}
F.~David.
\newblock Random matrices and two-dimensional gravity.
\newblock In {\em Fundamental problems in statistical mechanics, VIII
  (Altenberg, 1993)}, pages 105--126. North-Holland, Amsterdam, 1994.

\bibitem[Dav95]{MR1461284}
F.~David.
\newblock {Simplicial quantum gravity and random lattices}.
\newblock In B.~Julia and J.~Zinn-Justin, editors, {\em Gravitation et
  quantifications (Les Houches, Session LVII, 1992)}, pages 679--749. Elsevier
  B.V., Amsterdam, 1995.

\bibitem[DB02]{2002PhRvL..89z4101D}
B.~{Duplantier} and I.~A. {Binder}.
\newblock {Harmonic Measure and Winding of Conformally Invariant Curves}.
\newblock {\em Phys. Rev. Lett.}, 89:264101, 2002, arXiv:cond-mat/0208045.

\bibitem[DB09]{2008arXiv0810.2858D}
F.~{David} and M.~{Bauer}.
\newblock {Another derivation of the geometrical KPZ relations}.
\newblock {\em J. Stat. Mech.}, 3:P03004, 2009, arXiv:0810.2858.

\bibitem[DDSW90]{1990MPLA....5.1041D}
S.~R. {Das}, A.~{Dhar}, A.~M. {Sengupta}, and S.~R. {Wadia}.
\newblock {New critical behavior in $d = 0$ large-$N$ matrix models}.
\newblock {\em Mod. Phys. Lett. A}, 5:1041--1056, 1990.

\bibitem[DFG05]{MR2152580}
P.~Di~Francesco and E.~Guitter.
\newblock Geometrically constrained statistical systems on regular and random
  lattices: from folding to meanders.
\newblock {\em Phys. Rep.}, 415(1):1--88, 2005.

\bibitem[DFGG00]{MR1749396}
P.~Di~Francesco, O.~Golinelli, and E.~Guitter.
\newblock Meanders: exact asymptotics.
\newblock {\em Nucl. Phys. B}, 570(3):699--712, 2000.

\bibitem[DFGZJ95]{MR1320471}
P.~Di~Francesco, P.~Ginsparg, and J.~Zinn-Justin.
\newblock {$2$}{D} gravity and random matrices.
\newblock {\em Phys. Rep.}, 254:1--133, 1995.

\bibitem[DK88a]{1988PhRvL..61.1433D}
B.~{Duplantier} and I.~K. {Kostov}.
\newblock {Conformal spectra of polymers on a random surface}.
\newblock {\em Phys. Rev. Lett.}, 61:1433--1437, 1988.

\bibitem[DK88b]{1988DupKwon}
B.~{Duplantier} and K.-H. {Kwon}.
\newblock {Conformal invariance and intersection of random walks}.
\newblock {\em Phys. Rev. Lett.}, 61:2514--2517, 1988.

\bibitem[DK89]{MR1005268}
J.~Distler and H.~Kawai.
\newblock Conformal field theory and {$2$}{D} quantum gravity.
\newblock {\em Nucl. Phys. B}, 321:509--527, 1989.

\bibitem[DK90]{1990NuPhB.340..491D}
B.~{Duplantier} and I.~K. {Kostov}.
\newblock {Geometrical critical phenomena on a random surface of arbitrary
  genus}.
\newblock {\em Nucl. Phys. B}, 340:491--541, 1990.

\bibitem[DLJ{\etalchar{+}}07]{MR2339977}
J.~Dai, W.~Luo, M.~Jin, W.~Zeng, Y.~He, S.-T. Yau, and X.~Gu.
\newblock Geometric accuracy analysis for discrete surface approximation.
\newblock {\em Comput. Aided Geom. Des.}, 24(6):323--338, 2007.

\bibitem[DO94]{1994NuPhB.429..375D}
H.~{Dorn} and H.-J. {Otto}.
\newblock {Two- and three-point functions in Liouville theory}.
\newblock {\em Nucl. Phys. B}, 429:375--388, 1994, arXiv:hep-th/9403141.

\bibitem[DS]{DS3}
Bertrand Duplantier and Scott Sheffield.
\newblock Schramm-{L}oewner evolution and {L}iouville quantum gravity.
\newblock In preparation.

\bibitem[DS09]{2009arXiv0901.0277D}
B.~{Duplantier} and S.~{Sheffield}.
\newblock {Duality and KPZ in Liouville Quantum Gravity}.
\newblock {\em Phys. Rev. Lett.}, 102:150603, 2009, arXiv:0901.0277.

\bibitem[Dup98]{MR1666816}
B.~Duplantier.
\newblock Random walks and quantum gravity in two dimensions.
\newblock {\em Phys. Rev. Lett.}, 81:5489--5492, 1998.

\bibitem[{Dup}99a]{1999HMDup}
B.~{Duplantier}.
\newblock {Harmonic measure exponents for two-dimensional percolation}.
\newblock {\em Phys. Rev. Lett.}, 82:3940--3943, 1999, arXiv:cond-mat/9901008.

\bibitem[Dup99b]{MR1723364}
B.~Duplantier.
\newblock Random walks, polymers, percolation, and quantum gravity in two
  dimensions.
\newblock {\em Phys. A}, 263(1-4):452--465, 1999.
\newblock STATPHYS 20 (Paris, 1998).

\bibitem[{Dup}99c]{1999PhRvL..82..880D}
B.~{Duplantier}.
\newblock {Two-dimensional copolymers and exact conformal multifractality}.
\newblock {\em Phys. Rev. Lett.}, 82:880--883, 1999, arXiv:cond-mat/9812439.

\bibitem[{Dup}00]{2000PhRvL..84.1363D}
B.~{Duplantier}.
\newblock {Conformally invariant fractals and potential theory}.
\newblock {\em Phys. Rev. Lett.}, 84:1363--1367, 2000, arXiv:cond-mat/9908314.

\bibitem[Dup04]{MR2112128}
B.~Duplantier.
\newblock Conformal fractal geometry \& boundary quantum gravity.
\newblock In {\em Fractal geometry and applications: a jubilee of Beno\^\i t
  Mandelbrot, Part 2}, volume~72 of {\em Proc. Sympos. Pure Math.}, pages
  365--482. Amer. Math. Soc., Providence, RI, 2004, arXiv:math-ph/0303034.

\bibitem[{Dup}06]{2006math.ph...8053D}
B.~{Duplantier}.
\newblock {Conformal random geometry}.
\newblock In A.~Bovier, F.~Dunlop, F.~den Hollander, A.~van Enter, and
  J.~Dalibard, editors, {\em Mathematical Statistical Physics (Les Houches
  Summer School, Session LXXXIII, 2005)}, pages 101--217. Elsevier B.V.,
  Amsterdam, 2006, arXiv:math-ph/0608053.

\bibitem[{Dur}94]{1994NuPhB.426..203D}
B.~{Durhuus}.
\newblock {Multi-spin systems on a randomly triangulated surface}.
\newblock {\em Nucl. Phys. B}, 426:203--222, 1994.

\bibitem[DZ97]{DZ}
A.~Dembo and O.~Zeitouni.
\newblock {\em Large Deviations Techniques and Applications.}
\newblock Springer, New York, 1997.
\newblock Second edition.

\bibitem[EB99]{Eynard-Bonnet}
B.~{Eynard} and G.~{Bonnet}.
\newblock {The {P}otts-$Q$ random matrix model: loop equations, critical
  exponents, and rational case}.
\newblock {\em Phys. Lett. B}, 463:273--279, 1999, arXiv:hep-th/9906130.

\bibitem[EK95]{1995NuPhB.455..577E}
B.~{Eynard} and C.~{Kristjansen}.
\newblock {Exact solution of the $O(n)$ model on a random lattice}.
\newblock {\em Nucl. Phys. B}, 455:577--618, 1995, arXiv:hep-th/9506193.

\bibitem[EO07]{2007math.ph...2045E}
B.~Eynard and N.~Orantin.
\newblock {Invariants of algebraic curves and topological expansion.}
\newblock {\em Commun. Number Theory Phys.}, 1(2):347--452, 2007.

\bibitem[EO08]{2008JHEP...06..037E}
B.~{Eynard} and N.~{Orantin}.
\newblock {Topological expansion and boundary conditions}.
\newblock {\em J. High Energy Phys.}, 6:37, 2008, arXiv:0710.0223.

\bibitem[{Eyn}01]{Eynard2000}
B.~{Eynard}.
\newblock {Random matrices}.
\newblock {\em Saclay Lectures in Theoretical Physics}, 2001.
\newblock http://ipht.cea.fr/Docspht//search/article.php?IDA=257, unpublished.

\bibitem[{Eyn}09]{2009JHEP...03..003E}
B.~{Eynard}.
\newblock {Large $N$ expansion of convergent matrix integrals, holomorphic
  anomalies, and background independence}.
\newblock {\em J. High Energy Phys.}, 3:3, 2009, arXiv:0802.1788.

\bibitem[EZ92]{1992NuPhB.386..558E}
B.~{Eynard} and J.~{Zinn-Justin}.
\newblock {The $O(n)$ model on a random surface: critical points and
  large-order behaviour}.
\newblock {\em Nucl. Phys. B}, 386:558--591, 1992, arXiv:hep-th/9204082.

\bibitem[FK92]{MR1139765}
H.~M. Farkas and I.~Kra.
\newblock {\em Riemann surfaces}, volume~71 of {\em Graduate Texts in
  Mathematics}.
\newblock Springer-Verlag, New York, second edition, 1992.

\bibitem[FSS04]{flajolet2}
P.~{Flajolet}, B.~{Salvy}, and G.~{Schaeffer}.
\newblock {Airy phenomena and analytic combinatorics of connected graphs}.
\newblock {\em Electron. J. Comb.}, 11(1):$\#$R34,1--30, 2004.

\bibitem[FZZ00]{fateev-2000}
V.~Fateev, A.~B. Zamolodchikov, and Al.~B. Zamolodchikov.
\newblock Boundary {L}iouville field theory {I}. {B}oundary state and boundary
  two-point function.
\newblock 2000, arXiv:hep-th/0001012.
\newblock Unpublished.

\bibitem[GK89]{1989PhLB..220..200G}
M.~{Gaudin} and I.~K. {Kostov}.
\newblock {$O(n)$ model on a fluctuating planar lattice. Some exact results}.
\newblock {\em Phys. Lett. B}, 220:200--206, 1989.

\bibitem[GL91]{1991PhRvL..66.2051G}
M.~{Goulian} and M.~{Li}.
\newblock {Correlation functions in Liouville theory}.
\newblock {\em Phys. Rev. Lett.}, 66:2051--2055, 1991.

\bibitem[GM93]{Ginsparg-Moore}
P.~{Ginsparg} and G.~{Moore}.
\newblock Lectures on 2d gravity and 2d string theory ({TASI} 1992).
\newblock In J.~{Harvey} and J.~{Polchinski}, editors, {\em Recent direction in
  particle theory, Proceedings of the 1992 TASI}. World Scientific, Singapore,
  1993.

\bibitem[GWY03]{MR2131879}
X.~Gu, Y.~Wang, and S.-T. Yau.
\newblock Geometric compression using {R}iemann surface structure.
\newblock {\em Commun. Inf. Syst.}, 3(3):171--182 (2004), 2003.

\bibitem[GY02]{MR1958012}
X.~Gu and S.-T. Yau.
\newblock Computing conformal structures of surfaces.
\newblock {\em Commun. Inf. Syst.}, 2(2):121--145, 2002.

\bibitem[HK71]{MR0292433}
R.~H{\o}egh-Krohn.
\newblock A general class of quantum fields without cut-offs in two space-time
  dimensions.
\newblock {\em Commun. Math. Phys.}, 21:244--255, 1971.

\bibitem[HMP10]{huperes}
X.~Hu, J.~Miller, and Y.~Peres.
\newblock {Thick points of the Gaussian free field.}
\newblock {\em Ann. Probab.}, 38(2):896--926, 2010, arXiv:0902.3842.

\bibitem[{Hos}01]{2001JHEP...11..044H}
K.~{Hosomichi}.
\newblock {Bulk-boundary propagator in Liouville theory on a disc}.
\newblock {\em J. High Energy Phys.}, 11:44, 2001, arXiv:hep-th/0108093.

\bibitem[JJ99]{1999PhLB..460..271J}
W.~{Janke} and D.~A. {Johnston}.
\newblock {The wrong kind of gravity}.
\newblock {\em Phys. Lett. B}, 460:271--275, 1999.

\bibitem[JM92]{1992PhLB..286..239J}
S.~{Jain} and S.~D. {Mathur}.
\newblock {World-sheet geometry and baby universes in 2D quantum gravity.}
\newblock {\em Phys. Lett. B}, 286:239--246, 1992.

\bibitem[JWGY05]{MR2165683}
M.~Jin, Y.~Wang, X.~Gu, and S.-T. Yau.
\newblock Optimal global conformal surface parameterization for visualization.
\newblock {\em Commun. Inf. Syst.}, 4(2):117--134, 2005.

\bibitem[{Kaz}86]{1986PhLA..119..140K}
V.~A. {Kazakov}.
\newblock {Ising model on a dynamical planar random lattice: Exact solution}.
\newblock {\em Phys. Lett. A}, 119:140--144, 1986.

\bibitem[KH95]{1995NuPhB.434..264K}
I.~R. {Klebanov} and A.~{Hashimoto}.
\newblock {Non-perturbative solution of matrix models modified by trace-squared
  terms}.
\newblock {\em Nucl. Phys. B}, 434:264--282, 1995.

\bibitem[KH96]{1996NuPhS..45..135K}
I.~R. {Klebanov} and A.~{Hashimoto}.
\newblock {Wormholes, matrix models, and Liouville gravity}.
\newblock {\em Nucl. Phys. B Proc. Suppl.}, 45:135--148, 1996.

\bibitem[KK92]{1992NuPhB.386..520K}
V.~A. {Kazakov} and I.~K. {Kostov}.
\newblock {Loop gas model for open strings}.
\newblock {\em Nucl. Phys. B}, 386:520--557, 1992.

\bibitem[KKM85]{1985PhLB..157..295K}
V.~A. {Kazakov}, I.~K. {Kostov}, and A.~A. {Migdal}.
\newblock {Critical properties of randomly triangulated planar random
  surfaces}.
\newblock {\em Phys. Lett. B}, 157:295--300, 1985.

\bibitem[{Kle}95]{1995PhRvD..51.1836K}
I.~R. {Klebanov}.
\newblock {Touching random surfaces and Liouville gravity}.
\newblock {\em Phys. Rev. D}, 51:1836--1841, 1995.

\bibitem[{Kor}92a]{1992PhLB..296..323K}
G.~P. {Korchemsky}.
\newblock {Loops in the curvature matrix model}.
\newblock {\em Phys. Lett. B}, 296:323--334, 1992.

\bibitem[{Kor}92b]{1992MPLA....7.3081K}
G.~P. {Korchemsky}.
\newblock {Matrix model perturbed by higher order curvature terms}.
\newblock {\em Mod. Phys. Lett. A}, 7:3081--3100, 1992.

\bibitem[{Kos}89a]{1989MPLA....4..217K}
I.~K. {Kostov}.
\newblock {$O(n)$ vector model on a planar random lattice: Spectrum of
  anomalous dimensions}.
\newblock {\em Mod. Phys. Lett. A}, 4:217--226, 1989.

\bibitem[{Kos}89b]{1989NuPhB.326..583K}
I.~K. {Kostov}.
\newblock {The ADE face models on a fluctuating planar lattice}.
\newblock {\em Nucl. Phys. B}, 326:583--612, 1989.

\bibitem[Kos00]{MR1762323}
I.~K. Kostov.
\newblock Exact solution of the six-vertex model on a random lattice.
\newblock {\em Nucl. Phys. B}, 575(3):513--534, 2000.

\bibitem[{Kos}03]{2003NuPhB.658..397K}
I.~K. {Kostov}.
\newblock {Boundary correlators in 2D quantum gravity: Liouville versus
  discrete approach}.
\newblock {\em Nucl. Phys. B}, 658:397--416, 2003, arXiv:hep-th/0212194.

\bibitem[{Kos}07]{2007JSMTE..08...23K}
I.~K. {Kostov}.
\newblock {Boundary loop models and 2D quantum gravity}.
\newblock {\em J. Stat. Mech.}, 08:P08023, 2007, arXiv:hep-th/0703221.

\bibitem[{Kos}09]{2008ExactMethods}
I.~K. {Kostov}.
\newblock {Boundary $O(n)$ models and 2D quantum gravity}.
\newblock In S.~Ouvry, J.~Jacobsen, V.~Pasquier, D.~Serban, and L.~Cugliandolo,
  editors, {\em Exact methods in low-dimensional statistical physics and
  quantum theory (Les Houches Summer School, Session LXXXIX, 2008)}. Oxford
  University Press, Great Clarendon Street, 2009.

\bibitem[KPS04]{MR2057108}
I.~K. Kostov, B.~Ponsot, and D.~Serban.
\newblock Boundary {L}iouville theory and 2{D} quantum gravity.
\newblock {\em Nucl. Phys. B}, 683:309--362, 2004.

\bibitem[KPZ88]{MR947880}
V.~G. Knizhnik, A.~M. Polyakov, and A.~B. Zamolodchikov.
\newblock Fractal structure of {$2$}{D}-quantum gravity.
\newblock {\em Mod. Phys. Lett. A}, 3:819--826, 1988.

\bibitem[KS91]{MR1121940}
I.~Karatzas and S.~E. Shreve.
\newblock {\em Brownian motion and stochastic calculus}, volume 113 of {\em
  Graduate Texts in Mathematics}.
\newblock Springer-Verlag, New York, second edition, 1991.

\bibitem[KZJ99]{MR1690386}
V.~A. Kazakov and P.~Zinn-Justin.
\newblock Two-matrix model with {$ABAB$} interaction.
\newblock {\em Nuclear Phys. B}, 546(3):647--668, 1999.

\bibitem[LG07]{MR2336042}
J.-F. Le~Gall.
\newblock The topological structure of scaling limits of large planar maps.
\newblock {\em Invent. Math.}, 169:621--670, 2007.

\bibitem[LG08]{2008LeGall}
J.-F. Le~Gall.
\newblock {Geodesics in large planar maps and the Brownian map}.
\newblock 2008, arXiv:math.PR/08043012.
\newblock To appear in Acta Math.

\bibitem[LM09]{2009arXiv0907.3262L}
J.-F. {Le Gall} and G.~{Miermont}.
\newblock {Scaling limits of random planar maps with large faces}.
\newblock 2009, arXiv:0907.3262.
\newblock To appear in Ann. Probab.

\bibitem[LSW01a]{MR2002m:60159a}
G.~F. Lawler, O.~Schramm, and W.~Werner.
\newblock Values of {B}rownian intersection exponents. {I}. {H}alf-plane
  exponents.
\newblock {\em Acta Math.}, 187(2):237--273, 2001, arXiv:math.PR/9911084.

\bibitem[LSW01b]{MR2002m:60159b}
G.~F. Lawler, O.~Schramm, and W.~Werner.
\newblock Values of {B}rownian intersection exponents. {II}. {P}lane exponents.
\newblock {\em Acta Math.}, 187(2):275--308, 2001, arXiv:math.PR/0003156.

\bibitem[LSW02]{MR1899232}
G.~F. Lawler, O.~Schramm, and W.~Werner.
\newblock Values of {B}rownian intersection exponents. {III}. {T}wo-sided
  exponents.
\newblock {\em Ann. Inst. H. Poincar\'e Probab. Statist.}, 38(1):109--123,
  2002, arXiv:math.PR/0005294.

\bibitem[LW99]{MR1742883}
G.~F. Lawler and W.~Werner.
\newblock Intersection exponents for planar {B}rownian motion.
\newblock {\em Ann. Probab.}, 27(4):1601--1642, 1999.

\bibitem[Mie08]{MR2399286}
G.~Miermont.
\newblock On the sphericity of scaling limits of random planar
  quadrangulations.
\newblock {\em Electron. Commun. Probab.}, 13:248--257, 2008.

\bibitem[Mie09]{MR2571957}
G.~Miermont.
\newblock Tessellations of random maps of arbitrary genus.
\newblock {\em Ann. Sci. \'Ec. Norm. Sup\'er. (4)}, 42(5):725--781, 2009.

\bibitem[MM07]{MR2349571}
J.-F. Marckert and G.~Miermont.
\newblock Invariance principles for random bipartite planar maps.
\newblock {\em Ann. Probab.}, 35(5):1642--1705, 2007.

\bibitem[MSS91]{1991NuPhB.362..665M}
G.~{Moore}, N.~{Seiberg}, and M.~{Staudacher}.
\newblock {From loops to states in two-dimensional quantum gravity}.
\newblock {\em Nucl. Phys. B}, 362:665--709, 1991.

\bibitem[MW08]{MR2375600}
G.~Miermont and M.~Weill.
\newblock Radius and profile of random planar maps with faces of arbitrary
  degrees.
\newblock {\em Electron. J. Probab.}, 13:no. 4, 79--106, 2008.

\bibitem[{Nak}04]{2004IJMPA..19.2771N}
Y.~{Nakayama}.
\newblock {Liouville Field Theory}.
\newblock {\em Int. J. Mod. Phys. A}, 19:2771--2930, 2004.

\bibitem[PM06]{MortersPeres}
Y.~Peres and P.~M\"orters.
\newblock {\em Brownian motion}.
\newblock Unpublished draft, 2006.
\newblock http://www.stat.berkeley.edu/~peres/bmbook.pdf.

\bibitem[{Pol}]{2008arXiv0812.0183}
A.~M. {Polyakov}.
\newblock {From Quarks to Strings}.
\newblock In A.~{Cappelli}, E.~{Castellani}, F.~{Colomo}, and P.~{Di Vecchia},
  editors, {\em The Birth of String Theory.}, Cambridge. Cambridge University
  Press, arXiv:0812.0183.
\newblock To appear (November 2011).

\bibitem[Pol81a]{MR623209}
A.~M. Polyakov.
\newblock Quantum geometry of bosonic strings.
\newblock {\em Phys. Lett. B}, 103(3):207--210, 1981.

\bibitem[Pol81b]{MR623210}
A.~M. Polyakov.
\newblock Quantum geometry of fermionic strings.
\newblock {\em Phys. Lett. B}, 103(3):211--213, 1981.

\bibitem[Pol87a]{MR1122810}
A.~M. Polyakov.
\newblock {\em Gauge fields and strings}.
\newblock Harwood Academic Publishers, Chur, 1987.

\bibitem[Pol87b]{Polyakov:1987zb}
A.~M. Polyakov.
\newblock {Quantum gravity in two-dimensions}.
\newblock {\em Mod. Phys. Lett. A}, 2:893, 1987.

\bibitem[Pol89]{MR1052937}
A.~M. Polyakov.
\newblock Two-dimensional quantum gravity. {S}uperconductivity at high {$T\sb
  C$}.
\newblock In {\em Fields, Strings and Critical Phenomena (Les Houches, Session
  XLIX, 1988)}, pages 305--368. North-Holland, Amsterdam, 1989.

\bibitem[PT02]{MR1877816}
B.~Ponsot and J.~Teschner.
\newblock Boundary {L}iouville field theory: boundary three-point function.
\newblock {\em Nucl. Phys. B}, 622:309--327, 2002.

\bibitem[RV08]{rhodes-2008}
R.~Rhodes and V.~Vargas.
\newblock {KPZ} formula for log-infinitely divisible multifractal random
  measures, 2008, arXiv:math.PR/0807.1036.
\newblock To appear in ESAIM P\&S.

\bibitem[Sch98]{schaeffer1998}
G.~Schaeffer.
\newblock Conjugaison d'arbres et cartes combinatoires al\'eatoires.
\newblock {\em PhD Thesis, Univ. Bordeaux I, Talence}, 1998.

\bibitem[SD87]{1987PhRvL..58.2325S}
H.~{Saleur} and B.~{Duplantier}.
\newblock {Exact determination of the percolation hull exponent in two
  dimensions}.
\newblock {\em Phys. Rev. Lett.}, 58:2325--2328, 1987.

\bibitem[{Sei}90]{1990PThPS.102..319S}
N.~{Seiberg}.
\newblock {Notes on quantum Liouville theory and quantum gravity}.
\newblock {\em Progr. Theoret. Phys. Suppl.}, 102:319--349, 1990.

\bibitem[She]{Zipper}
Scott Sheffield.
\newblock Conformal weldings of random surfaces: {S}{L}{E} and the quantum
  gravity zipper.
\newblock In preparation.

\bibitem[She07]{MR2322706}
S.~Sheffield.
\newblock Gaussian free fields for mathematicians.
\newblock {\em Probab. Th. Rel. Fields}, 139:521--541, 2007.

\bibitem[Sim74]{BSimon}
B.~Simon.
\newblock {\em The $P\left( \Phi \right)_2$ Euclidean (quantum) field theory}.
\newblock Princeton University Press, Princeton, 1974.

\bibitem[SW01]{2001math......9120S}
S.~{Smirnov} and W.~{Werner}.
\newblock {Critical exponents for two-dimensional percolation}.
\newblock {\em Math. Res. Lett.}, 2001, arXiv:math.PR/0109120.

\bibitem[{Tak}93]{1993MPLA....8.3529T}
L.~A. {Takhtajan}.
\newblock {Liouville Theory: Quantum geometry of Riemann surfaces}.
\newblock {\em Mod. Phys. Lett. A}, 8:3529--3535, 1993.

\bibitem[{Tes}95]{1995PhLB..363...65T}
J.~{Teschner}.
\newblock {On the Liouville three-point function}.
\newblock {\em Phys. Lett. B}, 363:65--70, 1995, arXiv:hep-th/9507109.

\bibitem[Tes01]{MR1867860}
J.~Teschner.
\newblock Liouville theory revisited.
\newblock {\em Class. Quant. Grav.}, 18:R153--R222, 2001.

\bibitem[Tes07]{MR2354665}
J.~Teschner.
\newblock From {L}iouville theory to the quantum geometry of {R}iemann
  surfaces.
\newblock In {\em Prospects in mathematical physics}, volume 437 of {\em
  Contemp. Math.}, pages 231--246. Amer. Math. Soc., Providence, RI, 2007.

\bibitem[TT06]{2006CMaPh.268..135T}
L.~A. {Takhtajan} and L.-P. {Teo}.
\newblock {Quantum Liouville theory in the background field formalism I.
  Compact Riemann surfaces}.
\newblock {\em Commun. Math. Phys.}, 268:135--197, 2006, arXiv:hep-th/0508188.

\bibitem[WGY05]{MR2165685}
Y.~Wang, X.~Gu, and S.-T. Yau.
\newblock Surface segmentation using global conformal structure.
\newblock {\em Commun. Inf. Syst.}, 4(2):165--179, 2005.

\bibitem[{Zam}04]{Zamolo2004}
Al.~B. {Zamolodchikov}.
\newblock {Higher equations of motion in Liouville field theory}.
\newblock {\em Int. J. Mod. Phys. A}, 19:510--523, 2004.

\bibitem[ZZ96]{1996NuPhB.477..577Z}
A.~B. {Zamolodchikov} and Al.~B. {Zamolodchikov}.
\newblock {Structure constants and conformal bootstrap in {L}iouville field
  theory}.
\newblock {\em Nucl. Phys. B}, 477:577--605, 1996, arXiv:hep-th/9506136.

\end{thebibliography}
\end{document}